\numberwithin{equation}{section}
\numberwithin{figure}{section}
\newcommand{\FF}{\mathcal{F}}
\newcommand{\I}{{\mathcal{I}}}
\newcommand{\R}{\textrm{I\kern-0.21emR}}
\newcommand{\N}{\textrm{I\kern-0.21emN}}
\newcommand{\T}{\mathbb{T}}
\newcommand{\B}{\mathcal{B}}
\newcommand{\U}{\mathcal{U}_L}
\newcommand{\V}{\mathcal{V}_L}
\newcommand{\Hn}{\mathcal{H}^{d-1}}
\newcommand{\W}{\Omega}
\renewcommand{\geq}{\geqslant}
\renewcommand{\leq}{\leqslant}
\def\1e{\mathds{1}}
\newtheorem{theorem}{Theorem}[section]
\theoremstyle{definition}\newtheorem{remark}{Remark}[section]
\title{How locating sensors in thermo-acoustic tomography?\footnote{
%The authors were partially supported by the ANR project AVENTURES. 
Y. Privat was partially supported by the Project ``Analysis and simulation of optimal shapes - application to lifesciences'' of the Paris City Hall.}}
\author{
Ma\"itine Bergounioux\footnote{Institut Denis Poisson, Universit\'e d'Orl\'eans, CNRS UMR 7013, 45067 Orl\'eans, France ({\tt maitine.bergounioux@univ-orleans.fr}).}
 \and \'Elie Bretin\footnote{Univ Lyon, INSA de Lyon, CNRS UMR 5208, Institut Camille Jordan, 20 avenue Albert Einstein, F-69621 Villeurbanne Cedex, France
({\tt elie.bretin@insa-lyon.fr}) }
	\and Yannick Privat\footnote{IRMA, Universit\'e de Strasbourg, CNRS UMR 7501, 7 rue Ren\'e Descartes, 67084 Strasbourg, France ({\tt yannick.privat@unistra.fr}).}
}
\date{\today}
\begin{document}

\maketitle

\begin{abstract}
Thermo-acoustic tomography is a non-invasive medical imaging technique, constituting a precise and cheap alternative to X-imaging. The principle is to excite a body to reconstruct with a pulse inducing an inhomogeneous heating and therefore expansion of tissues. This creates an acoustic wave pressure which is measured with sensors. The reconstruction of heterogeneities inside the body can be then performed by solving an inverse problem, knowing measurements of the acoustic waves outside the body. As the intensity of the measured pressure is expected to be small, a challenging problem consists in locating the sensors in a adequate way.

This paper is devoted to the determination of an optimal sensors location to achieve this reconstruction. 
We first introduce a model involving a least square functional standing for an observation of the pressure for a first series of measures by sensors, and an observability-like constant functional describing for the quality of reconstruction. 
Then, we determine an \textit{appropriate }location of sensors for two series of measures, in two steps: first, we reconstruct possible initial data by solving a worst-case design like problem. Second, we determine from the knowledge of these initial conditions the optimal location of sensors for observing in the best way the corresponding solution of the wave equation.

Far from providing an intrinsic solution to the general issue of locating sensors, solving this problem allows to determine a new sensors location improving the quality of reconstruction before getting a new series of measures. 
 
 We perform a mathematical analysis of this model:
 in particular we investigate existence issues and introduce a numerical algorithm to solve it. Eventually, several numerical 2D simulations illustrate our approach.
\end{abstract}

\noindent\textbf{Keywords:} wave equation, observability, shape optimization, calculus of variation, minimax problem, primal-dual algorithm.

\medskip

\noindent\textbf{AMS classification:} 49J20, 35M33, 80A23, 93C20.

% \tableofcontents

\section{Introduction}\label{sec:intro}

%\subsection{Reminders on thermo-acoustic tomography (TAT)}\label{sec:remindsecIntro}
We are interested in thermo-acoustic tomography which is a non-invasive medical imaging technique, constituting a precise and cheap alternative to X-imaging. The principle of this imaging process is quite simple: the tissue to be imaged is irradiated by a pulse and this energy induces an heating process. If the pulse is a radio-frequency electromagnetic pulse the technique is called thermo-acoustic tomography (TAT); if the pulse is a laser (the frequency is much higher) this is called photo-acoustic tomography (PAT). In any case, this creates a thermally induced pressure jump that propagates as a sound wave, which can be detected via sensors that are located outside the body to image. By detecting the pressure waves, heterogeneities can be observed: this gives important informations as for example the location and/or size of tumors in breast cancer. For more details on the process and the related works we refer to \cite{TATMaxwell,BBHP,Bonnefond, sch2} and the references therein. Roughly speaking, TAT and PAT are two hybrid techniques using electromagnetic waves as an excitation (input) and acoustic waves as an observation (output). Both techniques lead to similar ill-posed inverse problems. The modeling of the direct problem does not lead to the same equations, since physical assumptions are different. However, the process can be described by two equations (or equation systems) : 
\begin{itemize}
\item The first system of equations describes the generation of the heating process inside the body. In the PAT, this system involves the fluence equation (the fluence rate is the average of the light intensity in all directions) which is a diffusion equation \cite{BBHP}; in the TAT case, this equation is replaced by Maxwell equations \cite{TATMaxwell}. Then the temperature is driven by the classical heat equation, than can be neglected in the PAT case because the  high speed of light implies that the thermal effect is \textit{quasi instantaneous}. In both cases, the resulted term is a pressure wave source $p_0$ at time $t=0$.
\item The second equation models the behavior of the acoustic wave once the source $p_0$ is known. It is given by 
\begin{equation} \left\lbrace \begin{array}{ll}
\partial_{tt} p (t,x) - \operatorname{div} (c(x) \nabla p (t,x))=0 & \text{in }(0,T)\times \B,\\
p(0,\cdot) = p_0, &\text{on } \B, \\
\partial_t p(0,\cdot) =0 & \text{on } \B, \\
p=0 & \text{on } (0,T)\times \partial\B,
\end{array}\right. \end{equation}
where $T>0$ is arbitrary and $\B$ is a given ball whose radius will be chosen adequately in the sequel.
Most of the first papers on the subject focused on this second equation to deal with the inversion. Indeed, if we perform measures that allow to recover $p_0$, then one can recover heterogeneities via the quantitative estimates of appropriate physical parameters: these are the diffusion and absorption coefficients in PAT, or the electric susceptibility and the conductivity in TAT.
\end{itemize}
 
 In this paper, we are interested in the following issue:
% \begin{center}
{\bf from the knowledge of a first series of measures, how to locate sensors before performing a second one, in a relevant way?}
 %\end{center}
 \\
Such a question is related to the general problem of locating optimally sensors. Therefore, we focus on the pressure equation to get the reconstruction of $p_0$ as best as possible. 
 The forthcoming analysis can be used both in PAT and TAT contexts as well. In the sequel we always refer to TAT process for the sake of readability.

\bigskip

 %\subsection{Brief state of the art and structure of the paper}
Our goal is hence to determine the sensors location yielding to the best possible reconstruction. Therefore, we do not focus on the inverse problem solving where sensors are fixed but rather on their optimal location. There is a great number of papers on TAT/PAT reconstruction. We refer to the bibliography in \cite{TATMaxwell,BBHP,Bonnefond, sch2} and to the books by Ammari and al. \cite{ammari2008introduction,ammari2004reconstruction}.

To our knowledge, the question of optimal location of sensors in the framework of photoacoustic tomography has not been yet addressed. Note however that the issue of looking for optimal location and/or shapes of sensors for the wave/heat equation in a bounded domain has been in particular investigated in \cite{PTZCont1,PTZObs1,PTZcomplexity,PTZparab,PTZobsND}, with the assumption that in some sense, a reflection phenomenon of the solutions on the boundary of the domain occurs. Such an hypothesis would not be relevant for the application considered here and this is why we propose an alternative approach.

\medskip

In what follows, we will model the issue of determining optimal location or shape of sensors in two ways:
\begin{itemize}
\item[(i)] {\bf First attempt: prescribing the total surface of sensors.} We  first choose to deal with sensors described by the characteristic function of a measurable subset whose Lebesgue measure is prescribed. A possible drawback of such an approach rests upon the fact that optimal solution may not exist or have a large/infinite number of connected components (see Problem \eqref{ODP2}). 
\item[(ii)] {\bf Second attempt: prescribing the maximal number of sensors.} To avoid the emergence of too complex solutions and to make the problem modeling more realistic, we will investigate a second issue, where a maximal number $N_0$ of sensors (more precisely of connected components of the sensor set) is prescribed, in addition to the aforementioned total surface constraint (see Problem \eqref{ODP3}).
\end{itemize}

The paper is organized as follows: next section is devoted to the problem modeling. We justify there the adopted point of view. Next, we perform the mathematical analysis of this optimization problem, including existence results and optimality conditions as well. We end with preliminary numerical results in last section.

%=============================================
\section{Modeling the problem}

\subsection{The PDE (direct) model and the sensors set}\label{sec:prelim}

Throughout this paper, we will use the notation $\1e_A$ to denote the characteristic function of a set $A\subset \R^d$ ($d\ge 1$) which is the function equal to 1 on $A$ and 0 elsewhere. 

As explained in Section \ref{sec:intro}, the acoustic wave is assumed to solve the partial differential equation
\begin{equation}\label{WaveEqn} \left\lbrace \begin{array}{ll}
\partial_{tt} p (t,x) - \operatorname{div} (c(x) \nabla p (t,x))=0 & \text{in }(0,T)\times \B,\\
p(0,\cdot) = p_0, &\text{on } \B, \\
\partial_t p(0,\cdot) =0 & \text{on } \B, \\
p=0 & \text{on } (0,T)\times \partial\B,
\end{array}\right. \end{equation}
where $T>0$ is arbitrary and $\B\subset \R^d$ is a given ball whose radius is chosen in the sequel.

Let us also introduce $\Omega$ as a convex open subset of $\R^d$, representing the body we want to image, which means in particular that
\begin{equation}\label{suppOm}
\operatorname{supp}(p_0)\subset \Omega ,
\end{equation}
where $\operatorname{supp}(p_0)$ denotes the support of $p_0$. The convexity assumption on $\Omega$ is technical, and will be used later to make the set of admissible designs well defined. This will be commented in the sequel. 

For this reason, we will assume that $\B$ is large enough so that $\Omega\subset\B$; the set $\B\setminus\Omega$ stands for the ambient media (water or air), where the wave propagates.

The function $c\in L^\infty(\R^d)$ represents the sound speed and satisfies
$$ c(x) \ge c_0 >0 \quad \text{a.e. } x\in \R^d.$$
A typical choice for $c$ is to assume it is piecewise constant, namely:
\begin{equation}\label{def:c} 
 c=c_1 \1e_{\Omega}+c_2 \1e_{\B \setminus \Omega},\quad \text{with } (c_1,c_2)\in(\R_+^*)^2 ,
 \end{equation}
 with $c_1,~c_2>0$.
 Recall that for every initial $p_0\in H^1_0(\Omega)$, there exists a unique solution $p$ to System \eqref{WaveEqn} and that solution satisfies $p\in \mathcal{C}^1\left([0,T], L^2(\B) \right) \cap \mathcal{C}^0 \left([0,T], H^1_0(\B)\right)$ (see e.g. \cite{DL,Evans}).

\begin{remark}
For (future) numerical purposes, the set $\B$ is chosen bounded but it may be noticed that, if the radius $R$ of $\B$ is large enough, then $p$ vanishes on $\partial\B$ all along the recording process (i.e. for $t\leq T$). Precisely, there exists $R>0$ large enough such that, for every $p_0\in H_0^1(\Omega)$, the solution of problem \eqref{WaveEqn} coincides with the solution of 
\begin{equation}\label{WaveEqn2} \left\lbrace \begin{array}{ll}
\partial_{tt} p(t,x) - \operatorname{div} (c(x) \nabla p(t,x))=0 & \text{in } (0,T)\times \R^d,\\
p(0,\cdot) = p_0(\cdot) &\text{on } \R^d,\\
\partial_t p(0,\cdot) =0 & \text{on } \R^d ,\\
\end{array}\right. \end{equation}
with the convention that the initial datum $p_0$ has been extended by 0 to the whole space $\R^d$. One refers for instance to \cite{Evans}.
\end{remark}

As mentioned in Section \ref{sec:intro}, the inverse problem of recovering $p_0\in H^1_0(\Omega) $ from measurements of $p$ on a set 
$(0,T) \times \Sigma$ (where $\Sigma$ is the sensors location) has been extensively studied the two past decades. We refer to 
\cite{ammari2008introduction,ammari2004reconstruction,BBHP,Bonnefond, sch2} and the references therein for further details. However, the point of view we adopt here is a variational one. Precisely, we do not use any reconstruction formula (exact of not) %or time-reversal approach 
to deal with the inverse problem. Following \cite{TATMaxwell,BBHP,Bonnefond} philosophy, we rather include the unknown parameter in a cost functional in a \textit{least-squared} sense. So far, both the reconstruction of $p_0$ and the sensors location question will be addressed in the same functional. 

\paragraph{The sensors set.} To model the optimal design problem of locating sensors in the best way, we first describe the class of admissible designs/sensors that we consider.

Let us endow the Lipschitz set $\partial\Omega$ with the usual $(d-1)$-dimensional Hausdorff measure $\Hn$. In the sequel, we say by convention that $\Gamma\subset\partial\Omega$ is measurable whenever it is measurable for the Hausdorff measure $\Hn$.

Introduce $\Sigma\subset \R^d$ as the subdomain of $\R^d$ occupied by sensors. Roughly speaking, we will assume that each connected component of $\Sigma$ is located around the boundary of $\Omega$ and has a positive {\em thickness} $\varepsilon$. More precisely, we assume the existence of a measurable set $\Gamma\subset\partial\Omega$ such that
\begin{equation}\label{DefSigma}
\Sigma = \left\lbrace s+\mu\,\nu (s), s\in\Gamma, \mu\in[0,\varepsilon] \right\rbrace ,
\end{equation}
where $ \nu (s)$ denotes the outward unit normal to $\Omega$ at $s$ (see Figure \ref{fig1}).
The set $\Sigma$ is thus supported by the annular ring
$$\widetilde{\partial\Omega} := \lbrace s+\mu\nu (s), s\in \partial\Omega, \mu\in [0,\varepsilon] \rbrace.$$
\begin{figure}[H]
\begin{center}
 \includegraphics[width=0.8 \linewidth]{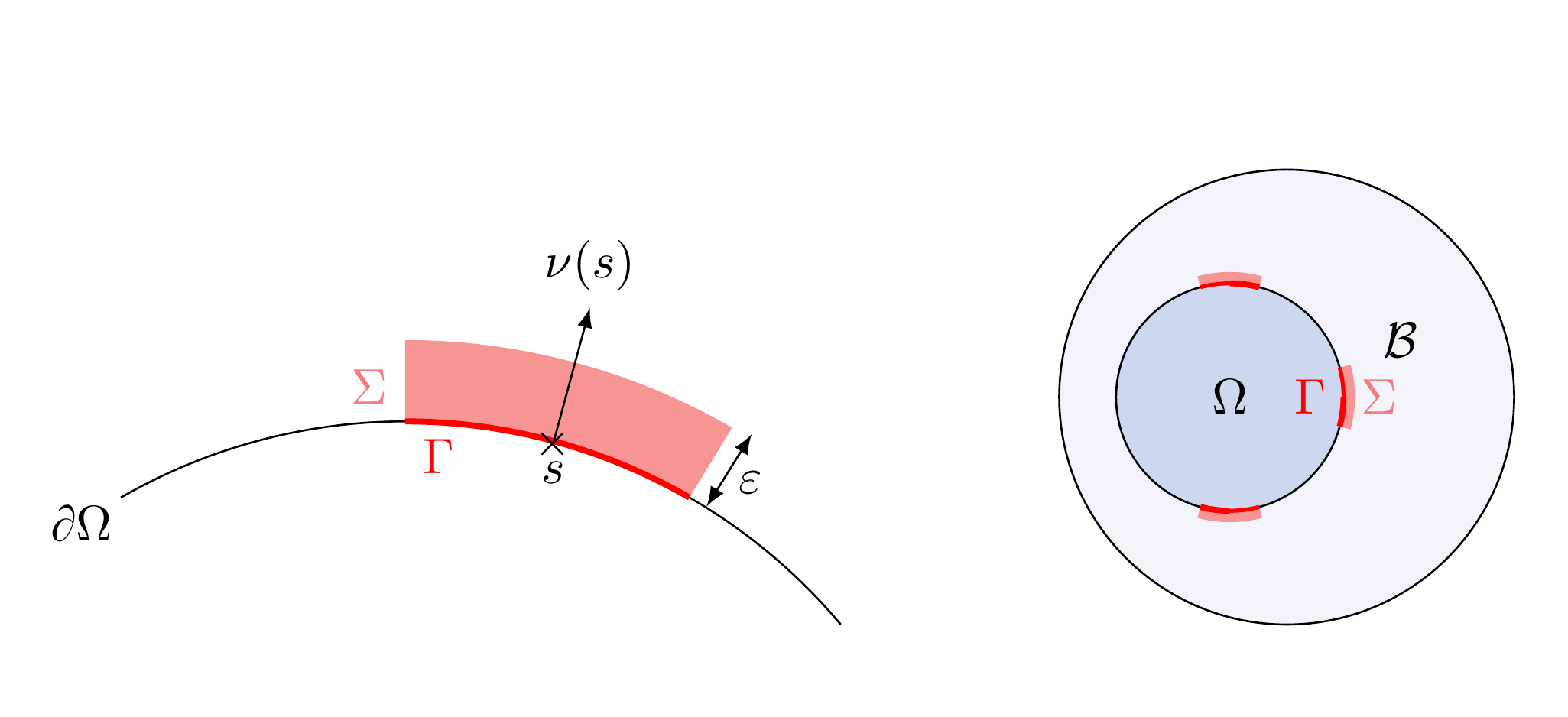} \end{center}
 \caption{The set of sensors. \label{fig1}}
\end{figure}

\begin{remark}
In this work, we choose to deal with volumetric sensors and we will determine adequately the observability term in the cost functional. Another relevant model would be to deal with boundary sensors; this would imply to  modify the choice of the aforementioned observability term in the cost functional . 
\end{remark}

When  investigating the optimal location or shape of sensors  without any restriction on the sensors domain measure, the solution is trivial and is given by $\Sigma=\widetilde{\partial\Omega}$ (see e.g. \cite{PTZCont1,PTZObs1,PTZcomplexity,PTZparab,PTZobsND}). This is not relevant for practical purposes and this is why we also assume in the sequel that the measure of the domain covered by sensors is limited: the measurable subsets $\Gamma$ of $\partial \Omega$ satisfy $\Hn(\Gamma)=L\Hn(\partial\Omega)$, where $L\in(0,1)$ denotes some given real number. Next we introduce the two classes 
\begin{equation}\label{def:VL}
\mathcal{V}_L =\lbrace a\in L^{\infty}(\partial\Omega) , \; a\in \{0,1\}, \text{ a.e. in }\partial\Omega, \ \int_{\partial\Omega} a(x)d\Hn(x) =L \Hn (\partial\Omega) \rbrace.
\end{equation}
of characteristic functions of admissible subsets $\Sigma\subset \B$ and 
\begin{equation}\label{defU}
\U = \lbrace a\in L^{\infty}(\widetilde{\partial\Omega}), \; a(s +\mu \nu (s)) = X(s), \; \text{ a.e. } (s,\mu)\in\partial\Omega \times [0,\varepsilon], \text{ with }X\in\mathcal{V}_L \rbrace 
\end{equation}
of characteristic functions of admissible subsets $\Gamma\subset \widetilde{\partial\Omega}$.

%\begin{remark}\label{WellDefU}
As $\Omega$ is assumed to be convex, the set $\U$ well defined. Indeed, $\Omega$ has in particular a Lipschitz boundary and the normal outward vector $\nu(\cdot)$ exists almost everywhere on $\partial\Omega$. The convexity assumption implies that the map $\widetilde{\partial\Omega}\ni s+\mu\nu (s) \mapsto (s,\mu)\in \partial\Omega\times [0,\varepsilon]$ is invertible according to the theorem of projection on a convex set.
%\end{remark}

\subsection{Criteria choices and optimization problems}\label{sec:criteriachoices}
In this subsection, we model the problem by describing the cost functional we will consider and set the optimization problems we investigate. 
Let $p_{obs}\in L^2((0,T)\times \Sigma)$ denote the pressure measured by sensors defined in $(0,T)\times \Sigma$. We extend 
$p_{obs}$ by 0 to $(0,T)\times \B$ and still denote the obtained function by $p_{obs}$, with a slight abuse of notation. Hence, one has $p_{obs} = \1e_{\Sigma} \, p_{obs}$ so that 
$p_{obs}\in L^2(0,T,L^2(\B))$. For $p_0\in H^1_0(\Omega)$, we also introduce $p_{[p_0]}$ as the solution of Problem \eqref{WaveEqn} for the initial datum $p_0$. 

%\mb{ J'avais un souci avec la d\'efinition de $ p_{obs}$ car en toute rigueur $ p_{obs}$ n'est d\'efinie que sur $(0,T)\times \Sigma$. Donc il faut ``compl\' eter'' $ p_{obs}$ partout. Ce n'est pas tr\`es important mais j'ai rectifi\'e \c ca}

The issue we address now is the following: 
%\begin{center}
\textbf{given a first (series of) measure(s), how can we determine a relevant location of sensors before performing a new (series of) measure(s)?}
%\end{center} 
\\
To achieve this goal, we use an optimal design problem as a model. 
Our approach can be split into two steps that we roughly describe.

\begin{itemize}
\item {\bf First step: determination of an initial pressure condition $p_0$.} Recall that, in the PDE model we consider (see Eq. \eqref{WaveEqn}), the initial velocity is assumed to vanish identically in $\B$. It is therefore enough to reconstruct the initial pressure. A first natural (naive) idea would be to address the problem 
\begin{equation}\label{pbOptnaive}
\inf_{p_0 \in \mathcal{P}_0(\Omega)} A_1 (\1e_\Sigma,p_0) 
\end{equation}
where 
\begin{equation}\label{defA1}
A_1 (\1e_\Sigma,p_0) := \frac{1}{2}\int_0^T \int_{\B} \1e_{\Sigma}(x) (p_{[p_0]}(t,x)-p_{obs}(t,x))^2\, dx \, dt .
\end{equation}
and $ \mathcal{P}_0(\Omega)$ is a given functional space that will be chosen adequately in the sequel.
 Unfortunately, the well-posedness of such least square problems, is in general not ensured. Moreover, because of the uncertainties of the sensors measures and the fact that $\Sigma$ is a strict subdomain of $\widetilde{\partial\Omega}$, there could exist many initial pressures $p_0$ leading to the observation $p_{obs}$. 
So, we decide to select an initial pressure function denoted $\widetilde{p_0}$ by solving a penalized optimization problem in order to impose three kinds of physical constraints: we look for (i) positive pressure term, (ii) whose support is included in a fixed compact set $K$ of $\B$, and (iii) belonging to a well-chosen functional space ensuring the well-posed character of the problem and hence, good reconstruction properties. From a practical viewpoint, this will be done by adding a penalization-regularization term denoted $\mathcal{R}(p_0)$ in the definition of the criterion. The choice of such a term will be introduced and commented in Section \ref{sec:comments}.

%if we expect to recover diffuse objects it is sufficient to look for $p_0$ in the Sobolev space $H^1_0(\Omega)$. In that case, however we may still use the total variation that turns to be the $L^1$-norm of $\nabla p_0$. \textcolor{blue}{*** ici cette phrase est bizarre je trouve. Le pb \'ecrit juste apr\`es ne contient pas encore de norme $H^1$ et on ne dit pas vraiment pourquoi on proc\`ede ainsi. Est-ce qu'on avoue que c'est plus facile math\'ematiquement? ***}

Solving the resulting problem (see its definition below) is a way to define an initial pressure function reconstructed (almost) everywhere in $\B$ and not only on $\Sigma$. This is the key point to address the optimal design problem in the second step. 

\item {\bf Second step: determination of the best location of sensors.} Once an initial pressure $\widetilde{p}_0$ has been determined with a given location of sensors, the new location will be obtained by solving the optimal design problem
$$
\sup_{\1e_\Sigma\in\U} A_2 (\1e_\Sigma,p_0),
$$
where
\begin{equation}\label{defA2}
A_2 (\1e_\Sigma,p_0) := \frac{\int_0^T \int_{\B} \1e_{\Sigma} (x) \partial_t p_{[\widetilde{p}_0]}(t,x)^2 \, dx \, dt}{\Vert p_0 \Vert_{H^1(\Omega)}^2 } .
\end{equation}

The  functional to maximize stands for the observation quality: we  discuss and comment this choice  below. To sum-up, we look for the location of sensors allowing the best observation of the worst possible pressure $p_0$ leading to the observation $p_{obs}$.
\end{itemize}

To summarize, this two step procedure leads to the following optimal design problem:\\

\fbox{
\begin{minipage}{0.9\textwidth}
\vspace{0.4cm}
{\sf {\bf Optimal location of sensors (new version).}\label{strategy}
Let $p_{obs}\in L^2(\Sigma)$ and $\gamma\in \R_+^*$ be a fixed parameter.
\begin{enumerate}
\item Computation of an initial pressure function $\widetilde{p}_0$ (whenever it exists) by solving the problem 
\begin{equation}\label{MainPbOpt}
\inf_{p_0 \in \mathcal{P}_0(\Omega)}J^0(\1e_\Sigma,p_0),
\end{equation} 
where the cost-functional $J^0$ is defined by 
\begin{equation}\label{J}
 J^0(\1e_\Sigma, p_0) = A_1 (\1e_\Sigma,p_0)+ \gamma \mathcal{R} (p_0),~\gamma >0
\end{equation}
and the admissible set is
$$
\mathcal{P}_0(\Omega)=\{p_0\in L^2(\Omega) \mid \operatorname{supp}(p_0)\subset K,\ p_0\geq 0\text{ a.e. in }\Omega\text{ and }\mathcal{R}(p_0)<+\infty\},
$$
where $\mathcal{R} (p_0)$ is a penalization-regularization term  whose choice will be made precise in Section \ref{sec:comments}.
\item Assuming that Problem \eqref{MainPbOpt} has a solution $\widetilde{p}_0$, determination of a new sensors location by solving 
\begin{equation}\label{ODP2}
\sup_{\1e_\Sigma\in\U} A_2(\1e_\Sigma,\widetilde{p}_0)
\end{equation}
\vspace{0.4cm}
\end{enumerate}
}
\end{minipage}
}\ \\ [0.3cm]

Next, to deal with more realistic constraints, we introduce the following modified optimal design problem, where we assume that the \textit{sensor} set is the union of $N_0$ connected components having the same length, and $N_0$ denotes a given nonzero integer. For the sake of simplicity, we will assume  that the dimension space is  $d=2$ and  the boundary of the convex set $\Omega$ is smooth, say $\mathcal{C}^1$. Let $O$ denote any point of $\Omega$ and assume that $\Omega$ has for polar equation $r=\rho(\theta)$ in a fixed orthonormal basis of $\R^2$ centered at $O$, where $\rho$ is a Lipschitz function of the one-dimensional torus $\T=\R/[0,2\pi)$.
% \begin{figure}[H]
% \begin{center}
% \includegraphics[width=0.5 \linewidth]{brouillon.jpg} \end{center}
% \caption{Essai de dessin - \`a refaire\label{fig2}}
% \end{figure}
 
Let us make this sensors set precise. Let $\ell >0$ be such that $\ell N_0 <\int_{\T}\sqrt{\rho(\theta)^2+\rho'(\theta)^2}\, d\theta$. We choose to consider the set of sensors represented by $\Sigma\subset \widetilde{\partial\Omega}$ associated to the domain $\Gamma$ with $\1e_\Gamma \in \mathcal{V}_L$ through the formula \eqref{defU}-\eqref{def:VL}, parametrized by a nondecreasing family $(\theta_n)_{n\in \{1,\dots,N_0\}}\in \T^{N_0}$ such that $\Sigma$ is associated to $\Gamma$ by the relation
\begin{equation}\label{eqSigmaGamma2}
\1e_\Sigma(x)=\1e_\Gamma(s+\mu \nu(s))=\1e_{\Gamma}(s), \qquad \text{for a.e. }x\in\widetilde{\partial\Omega}, \ s\in\partial\Omega, \ \mu\in [0,\varepsilon]
\end{equation}
for some $\1e_\Gamma\in \mathcal{V}_L$, where 
\begin{equation}\label{eqSigmaGamma2bis}
\Gamma =\bigcup_{n=1}^{N_0}\Gamma_n\qquad \text{with}\qquad \Gamma_n=\{(r\cos \theta,r\sin \theta)\in \Gamma\mid \theta\in (\theta_{n},\hat{\theta}_{n})\},
\end{equation}
where $\hat{\theta}_n$ is defined from $\theta_n$ by the relation
\begin{equation}\label{eq:perim}
\int_{\theta_n}^{\hat{\theta}_n}\sqrt{\rho(s)^2+\rho'(s)^2}\, ds=\ell .
\end{equation}
Note that, since the mapping $\T\ni \theta\mapsto \int_{\theta_n}^{\hat{\theta}_n}\sqrt{\rho(s)^2+\rho'(s)^2}\, ds$ is continuous and monotone increasing, it defines a bijection. This justifies the consistence of definition of $\hat{\theta}_n$ by \eqref{eq:perim}.

Roughly speaking, this last equality imposes that the set of sensors is represented by $N_0$ almost identical connected components (up to isometries). 

To avoid the superposition of sensors, we will also impose that
\begin{equation}\label{constraint:sens}
\forall n\in \{1,\dots,N_0-1\}, \qquad \hat{\theta}_{n}\leq \theta_{n+1}.
\end{equation}
It will be useful for the forthcoming analysis to notice that this last condition  rewrites 
\begin{equation}\label{constraint:sensbis}
\forall n\in \{1,\dots,N_0-1\}, \qquad \int_{\theta_n}^{\theta_{n+1}}\sqrt{\rho(s)^2+\rho'(s)^2}\, ds\geq \ell.
\end{equation}

\noindent As a conclusion, we will restrict the set of admissible configurations to 
\begin{equation}\label{def:UN0eps}
\mathcal{U}_{L,N_0}^{\ell}=\{\1e_\Sigma\in\U\mid \Sigma\text{ associated to $\Gamma$ by \eqref{eqSigmaGamma2} with }\1e_{\Gamma}\in \mathcal{V}_{L,N_0}^{\ell}\},
\end{equation}
where
\begin{equation}\label{def:VN0eps}
\mathcal{V}_{L,N_0}^{\ell}=\{\1e_\Gamma\in\V\mid \Gamma\text{ satisfies \eqref{eqSigmaGamma2bis}-\eqref{eq:perim}-\eqref{constraint:sensbis}}\}.
\end{equation}
%\yp{faire un dessin}

\fbox{
\begin{minipage}{0.9\textwidth}
\vspace{0.4cm}
{\sf {\bf Optimal location of sensors (updated version with a maximal number of connected components).}
Let $N_0\in \N^*$, $\ell >0$ such that $\ell N_0 <\int_{0}^{2\pi}\sqrt{\rho(\theta)^2+\rho'(\theta)^2}\, d\theta$, $p_{obs}\in L^2(\Sigma)$ and $\gamma\in \R_+^*$ be a fixed parameter.
\begin{enumerate}
\item Computation of an initial pressure function $\widetilde{p}_0$ (whenever it exists) by solving the problem \eqref{MainPbOpt} (as for the optimal design problem \eqref{ODP2}).
\item Assuming that Problem \eqref{MainPbOpt} has a solution $\widetilde{p}_0$, determination of a new sensors location by solving 
\begin{equation}\label{ODP3}
\sup_{\1e_\Sigma\in\mathcal{U}_{L,N_0}^{\ell}} A_2(\1e_\Sigma,\widetilde{p}_0),
\end{equation}
where $\mathcal{U}_{L,N_0}^{\ell}$ is given by \eqref{def:UN0eps}.
\vspace{0.4cm}
\end{enumerate}
}
\end{minipage}
}\ \\ [0.3cm]

In Section \ref{sec:num}, we will discuss this method and illustrate it with the help of several numerical results. 

\bigskip

\paragraph{Comments on the terms $A_1$ and $A_2$.}
We end this section with comments  on the choice of the functional. 

The term $ A_1(\1e_\Sigma,p_0)$ is a \textit{least-square } fidelity term, ensuring that the initial pressure $p_0$ makes the pressure $p_ {[p_0]}$ (corresponding to the reconstructed image) as close as possible to the observed pressure $p_{obs}$ on the domain occupied by sensors. 
The term $A_2(\1e_\Sigma,p_0)$ is inspired by the notion of {\it observability} in control or inverse problems theory.
Indeed, the equation \eqref{WaveEqn} is said {\it observable in time $T$ on $\Sigma$} if there exists a positive constant $C$ such that the inequality 
\begin{equation}\label{ineqobs}
C \Vert p_0\Vert_{H_0^1}^2
\leq \int_0^T\int_\B \1e_\Sigma (x) \left| \partial_t p_{[p_0]} (t,x)\right|^2 \,dx dt,
\end{equation}
holds for every initial datum $p_0 \in H_0^1(\Omega)$. 
 The largest constant $C=C_T(\1e_\Sigma)$ such that the inequality \eqref{ineqobs} holds is the so-called {\it observability constant} and writes
$$
C_T(\1e_\Sigma)=\inf_{p_0\in H^1_0(\B)}\frac{\int_0^T\int_\B \1e_\Sigma (x) \left| \partial_t p_{[p_0]} (t,x)\right|^2 \,dx\, dt}{\Vert p_0\Vert_{H_0^1}^2}.
$$
This constant $C_T(\1e_\Sigma)$ provides an overview of the well-posedness character of the reconstruction of $p_0$ from measurements of $p _{[p_0]}$ on $\Sigma$. Roughly speaking, the largest the observability constant is, the best is the reconstruction quality.

The functional $A_2$ thus models the efficiency of a set of sensors occupying the domain $\Sigma$.
Note in addition, that the choice of the observation variable $\1e_\Sigma \partial_t p_{[p_0]}$ is driven by the fact that sensors are 
 generally piezoelectric microphones, assumed to record time variations of the pressure.

Eventually, to understand the role of the maximization with respect to the location of sensors, we have illustrated a \textit{bad }situation on Figure \ref{Illu1}; here a location of sensors on a domain $\Sigma_2$ provides a less accurate result than another location on a subdomain $\Sigma_1$, since the observed pressure coincides only with the true one on the domain $\Sigma_2$. The maximization with respect to the design can then be interpreted as a \textit{worst case} functional, used to move the sensors toward a location ignored by the previous measures and where the reconstructed pressure is far from the observed one. 

%One of course want to minimize $A_1$ with respect to $p_0$. 
\begin{figure}[H]
\begin{center}
%\begin{tikzpicture}[scale=0.5]
%%\draw plot (\x , \x/4);
%\fill[color=Salmon!50] (1.2,-0.7) --++ (0.7,0)--++(0,3.3)--++(-0.7,0);
%\draw[color=Salmon!100] (1.2,-0.7) node [below right] {$\Sigma_1$};
%\fill[color=Salmon!50] (2*pi-0.3,-0.7) --++ (0.7,0)--++(0,3.3)--++(-0.7,0);
%\draw[color=Salmon!100] (2*pi-0.3,-0.7) node [below right] {$\Sigma_2$};
%\draw [blue, domain=0:4*pi] plot (\x,{ cos((\x r)/4)^2});
%\draw [domain=0:4*pi] plot(\x,{2*cos((\x r)/4)^2});
%\draw[blue] (4*pi,1) node[right] {$p_{obs}$};
%\draw (4*pi,2) node[right] {$p$};
%\draw[color=red!50, very thin, ->] (0,-0.5)--++(4*pi,0);
%\draw[color=red!100] (4*pi,-0.5) node[right] {$\partial\Omega$};
%\draw[color=red!50,very thin, ->] (0.1,-0.6) --++ (0,3);
%\draw[color=Salmon!100,thick, latex-latex] (1.55,0.85)--++(0,0.85);
%\draw[latex-] (2*pi,0)--++(1,1.8); 
%\draw (2*pi+1,1.8) node[above right] {$\mathcal{O}$};
%\end{tikzpicture}
%
 \includegraphics[width=0.8 \linewidth]{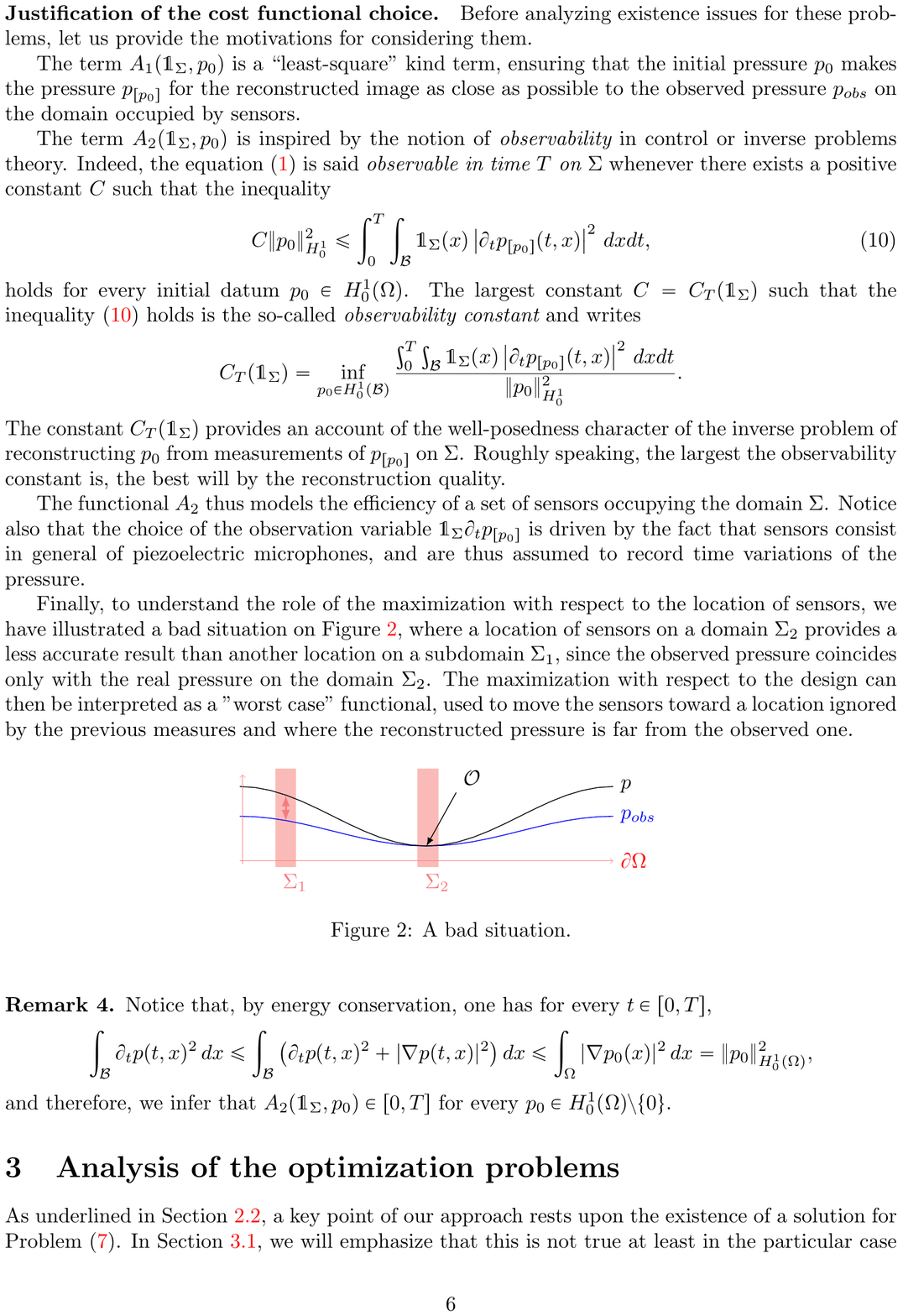} 
 \caption{A bad situation.\label{Illu1}}
\end{center}
\end{figure}

%\begin{remark}
%Note that, thank to energy conservation, one has for every $t\in [0,T]$, 
%$$
%\int_\B \partial_t p(t,x)^2\, dx \leq \int_\B \left(\partial_t p(t,x)^2 + |\nabla p(t,x)|^2 \right) dx \leq \int_\Omega |\nabla p_0(x)|^2\, dx= \Vert p_0 \Vert^2_{H^1_0(\Omega)},
% $$
%and therefore, we infer that $A_2(\1e_\Sigma,p_0)\in [0,T]$ for every $p_0 \in H^1_0(\Omega)\setminus \{0\}$.
%\end{remark}
%========================================
%%%%%%%%%%%%%%%%%%%%%%%%%%%%%%%%%%%%%%%%%%%%%%%%%%%%%%%%%%%%%%%%%%%%%%%%%%%%%%%%%%%%%%%%%%%%%%%%%%%%%%%%
\section{Analysis of Problems \eqref{MainPbOpt} and \eqref{ODP2}}\label{sec:Penal}
% $BV(\Omega)$. Indeed, However, considering the solution of the wave equation with data in $BV$ is not standard and (at least mathematically), existence of optimal configurations can be derived if we allow to consider more regular initial pressure terms, so that we expect to recover diffuse objects. This is the viewpoint we will consider in Section \ref{sec:comments}, by adding a well-chosen regularization term.

\subsection{Choice of the regularization term $\mathcal{R}(p_0)$}\label{sec:comments}
Let us add some comments to make the model (via the cost functional) fully precise. 
If we consider Problem \eqref{pbOptnaive}, it is clear that the ill-posedness of this problem prevents to find a solution (to highlight such a claim, one can easily build counterexamples based on gaussian beams). Therefore, 
we have to define a cost functional that is coercive with respect to the functional space by adding a penalization-regularization term $\mathcal{R}(p_0)$.
\begin{itemize}
\item If we decide to choose the $p_0$ space as $L^2(\mathcal{B})$ we ask for the weakest regularity. In that case $\mathcal{R}(p_0) = \|p_0\|_{L^2}$. This implies that $p_0$ will be small since the first term only involves ``sparse'' observation of $p$ and should be small even with small $\varepsilon$.
\item We hope to recover heterogeneities as images with sharp edges.The more appropriate functional space for that purpose is the space of functions of bounded variation.
Indeed, involving the first derivative via the total variation has a denoising effect.
However, considering the solution of the wave equation with data in $BV$ is not standard and (at least mathematically) and it will be more convenient for the upcoming analysis to deal with more regular data. 
%Even $W^{1,1}$ is not easy to handle. Therefore 
We choose a penalization implying in particular a $H^1_0$-smoothing effet. Therefore, we expect to recover diffuse objects. To avoid a too hard regularization process we also use the $W^{1,1}$ norm (which is the same as the $TV$-one in that case). 
 \item 
Yet, we don't have any compactness result to achieve the convergence process of subsequences. Therefore, we add a (small) viscosity penalization term which must be viewed as a theoretical  tool.
\end{itemize}
Eventually, we choose 
\begin{equation}\label{def:R0}
\boxed{\mathcal{R}(p_0)=\|\nabla p_0\|_{L^1} +\frac{\varepsilon}{2\gamma} \|p_0\|_{H^1}^2},
\end{equation}
so that the cost functional reads 
\begin{equation}\label{cost1}
J(p_0):= A_1(\1e_\Sigma,p_0) + \gamma \|\nabla p_0\|_{L^1} +\frac{\varepsilon}{2} \|p_0\|_{H^1}^2,
\end{equation}
where $\varepsilon << \gamma$
and $ A_1(p_0) =A_1(\1e_\Sigma,p_0)$ is given by \eqref{defA1}. 
Throughout this section, we omit the dependence with respect to $ \Sigma$ since this set is fixed. 
Hence, Problem \eqref{MainPbOpt} can be written as
\begin{equation}\label{MainPbOpt2}
\inf_{p_0 \in \mathcal{P}_K(\Omega)}J( p_0),
\end{equation} 
where $ K$ is a fixed compact subset of $\Omega$ and 
$$\mathcal{P}_K(\Omega)=\{p_0\in H^1_0\Omega) \mid \operatorname{supp}(p_0)\subset K\text{ and }p_0\geq 0\text{ a.e. in }\Omega\}.$$
Note that $\mathcal{P}_K(\Omega)$ is convex and closed for the weak $H^1(\Omega) $-topology.
{\rem We can choose for instance 
 $ K:=\{ x \in \Omega ~|~d(x, \partial \Omega ) \ge \delta~\} $ for some positive parameter $\delta$.} 

\subsection{Analysis of Problem \eqref{MainPbOpt2}}
In the sequel we denote by bold letters functions that are defined on $(0,T) \times \mathcal{B}$. Precisely, 
for any $p\in H^1_0(\Omega)$ , we denote $\bf{p} $ the solution of \eqref{WaveEqn} where $\bf{p}(0, \cdot)$=$ p$. 

\paragraph{Existence and uniqueness.}

{\theorem Fix $\varepsilon >0$ and $\gamma > 0$. Then problem \eqref{MainPbOpt2} has a unique solution $p^*$.}
\begin{proof}
Let $(p_n)_{n\in\N}$ be a minimizing sequence of \eqref{MainPbOpt2}. Therefore, $(p_n)_{n\in\N}$ is weakly convergent to some $p_0$ in $ H^1(\W)$ and $W^{1,1}(\W)$ and strongly in $L^2(\W)$. As $\mathcal{P}_K(\Omega)$ is $ H^1(\W)$ weakly closed then $p_0 \in \mathcal{P}_K(\Omega)$. Moreover, the sequence $({\bf p_n})_{n\in\N}$ defined by 
${\bf p_n}:=p_{[p_n]} $ strongly converges to ${\bf p_0}:=p_{[p_0]} $ in $L^\infty\left([0,T], L^2(\B)\right)$ (see \cite[Theorem 2 p. 567]{DL}). According to the Lebesgue theorem, we infer that $ A_1(p_n)\to A_1(p_0)$ as $n\to +\infty$. We end the proof with the lower semi-continuity of the norms in the regularizing term. Uniqueness is an easy consequence of the strict convexity of the $H^1$-norm.
Note  that the $\|p_0\|_{H^1}^2$ term is used both for existence and uniqueness.
\end{proof}

\paragraph{Optimality conditions.} 
Let $p^*$ be the solution to \eqref{MainPbOpt2} and (as before) ${\bf p^*} $ the solution of \eqref{WaveEqn} where ${\bf p^*}(0, \cdot) = p^*$ and look for optimality conditions. For every $p \in \mathcal{P}_K(\Omega)$ we get 
$$ 0 \in \partial J(p^*) ,$$
where $\partial J(p^*) $ stands for the subdifferential of $J$ at $p^*$ (see \cite{ET} for example). Indeed, $J$ is not G\^ateaux-differentiable because of the TV term. This gives 
\begin{equation} \label{CN1} -D [A_1](p^*)- \varepsilon (p^*-\Delta p^*) \in \gamma \partial TV(p^*) .
\end{equation} 
Recall that $TV(p^*) = \|\nabla p^*\|_{L^1}$ since $ p^*\in H^1_0(\W)$ but we use this notation for convenience. 
We first compute $D [A_1](p^*) $ (the derivative with respect to $p^*$) introducing an adjoint state. Let us define ${\bf q^*}$ as follows :
%===========
\begin{equation}\label{AdjWaveEqn} \left\lbrace \begin{array}{ll}
\partial_{tt} {\bf q^*} (t,x)\! - \!\operatorname{div} (c(x) \nabla {\bf q^*} (t,x))\!= \!({\bf p^*}(t,x)\!-\!p_{obs}(t,x)) )\1e_{\Sigma}(x)& \text{in }(0,T)\times \B,\\
{\bf q^*}(T,\cdot) = 0, ~
\partial_t {\bf q^*}(T,\cdot) =0 & \text{on } \B, \\
{\bf q^*}=0 & \text{on } (0,T)\times \partial\B,
\end{array}\right. \end{equation}
For every $p \in H^1_0(\W)$, a simple computation yields 
$$D[ A_1](p^*)\cdot p = \int_0^T \int_{\B} \1e_{\Sigma}(x) ({\bf p^*} (t,x)-p_{obs}(t,x)) \, {\bf p}(t,x) dx \, dt .$$
Indeed the derivative of $p_0 \mapsto p_{[p_0]} $ with respect to $p_0$ is $ p_{[p_0]} $ because of the linearity of the equation. Using \eqref{AdjWaveEqn} and two integrations by parts, we get
$$ D[ A_1](p^*)\cdot p = - \int_{\B} \1e_{\Sigma}(x) \,{\bf q^*}(0,x) p(x) \, dx~,$$
so that $D[ A_1](p^*) = - \1e_{\Sigma}\, {\bf q^*}(0).$ Equation \eqref{CN1} writes
\begin{equation} \label{CN2} \frac{1}{\gamma}
 \Big ( \1e_{\Sigma} {\bf q^*}(0) - \varepsilon (p^*-\Delta p^*) \Big ) \in \partial TV(p^*) .
\end{equation} 
We finally obtain the following
\begin{theorem}\label{thm:optcondpb1}
 Fix $K,~\Sigma,~\varepsilon >0$ and $\gamma>0$. The function $p^* \in H^1_0(\W)$ is the optimal solution to \eqref{MainPbOpt2} if and only if equation \eqref{CN2} is satisfied with ${\bf q^*} $ solution to 
\eqref{AdjWaveEqn} and ${\bf p^*} $ solution to 
\eqref{WaveEqn} with $p^*$ as initial condition.
\end{theorem}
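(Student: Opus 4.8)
The plan is to recognize \eqref{CN2} as the first-order (Euler--Lagrange) optimality inclusion of the convex minimization problem \eqref{MainPbOpt2}, the only non-routine ingredients being the nonsmooth total-variation term and the constraints defining $\mathcal{P}_K(\W)$.

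First I would record the convex structure underlying the problem. Because the solution map $p_0\mapsto\mathbf{p}=p_{[p_0]}$ of \eqref{WaveEqn} is linear and continuous from $H^1_0(\W)$ into $\mathcal{C}^0([0,T],H^1_0(\B))\cap\mathcal{C}^1([0,T],L^2(\B))$, the fidelity term $A_1$ is a nonnegative convex quadratic functional of $p_0$; together with the convex term $\gamma\|\nabla p_0\|_{L^1}$ and the strictly convex term $\tfrac{\varepsilon}{2}\|p_0\|_{H^1}^2$, this makes $J$ strictly convex on the convex set $\mathcal{P}_K(\W)$. For such a functional, $p^*\in\mathcal{P}_K(\W)$ is the (unique) minimizer \emph{if and only if} the first-order optimality condition holds; the whole statement therefore amounts to translating that condition into \eqref{CN2}.

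I would then split $J=\Phi+\gamma\,TV$ with $\Phi=A_1+\tfrac{\varepsilon}{2}\|\cdot\|_{H^1}^2$ convex and G\^ateaux-differentiable and $TV$ convex but nonsmooth, and write optimality as $-D\Phi(p^*)\in\gamma\,\partial TV(p^*)$, that is \eqref{CN1}, after observing that the derivative of $\tfrac{\varepsilon}{2}\|\cdot\|_{H^1}^2$ is represented by $\varepsilon(p^*-\Delta p^*)$ in the $H^{-1}$--$H^1_0$ duality (one integration by parts in space, licit since $h\in H^1_0(\W)$). To evaluate $D[A_1](p^*)$ I would use the adjoint-state method: by linearity of the state map, $D[A_1](p^*)\cdot h=\int_0^T\int_{\B}\1e_\Sigma(\mathbf{p}^*-p_{obs})\,\mathbf{h}$ with $\mathbf{h}=p_{[h]}$; introducing the backward adjoint $\mathbf{q}^*$ of \eqref{AdjWaveEqn} and integrating by parts twice in time---the terminal data $\mathbf{q}^*(T)=\partial_t\mathbf{q}^*(T)=0$ and the initial datum $\partial_t\mathbf{h}(0)=0$ cancelling the boundary contributions---together with the symmetry of the bilinear form $(u,v)\mapsto\int_{\B}c\,\nabla u\cdot\nabla v$ under the homogeneous Dirichlet condition and the fact that $\mathbf{h}$ solves the homogeneous equation, reduces the space--time integral to a trace of the adjoint at $t=0$ and yields the expression of $D[A_1](p^*)$ recorded above. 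Substituting it into \eqref{CN1} and dividing by $\gamma$ gives \eqref{CN2}.

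For the converse implication I would use convexity once more: since $J+\iota_{\mathcal{P}_K(\W)}$ is convex, any $p^*$ satisfying the subdifferential inclusion is automatically a global minimizer, which closes the equivalence. The step I expect to be the main obstacle is the rigorous handling of the constraint set $\mathcal{P}_K(\W)$: in principle the Euler inclusion carries a normal-cone term $N_{\mathcal{P}_K(\W)}(p^*)$, and one must argue---from the structure of the constraints $p_0\geq 0$ and $\operatorname{supp}(p_0)\subset K$, together with a qualification condition legitimizing the subdifferential sum rule---that \eqref{CN2} genuinely encodes optimality. A secondary technical point is to justify the two integrations by parts at the available regularity (with the merely $L^\infty$ coefficient $c$), which I would settle by a density argument, and to identify $\partial TV(p^*)$ as a bona fide convex subdifferential of the total variation.
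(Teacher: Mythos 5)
Your proposal follows essentially the same route as the paper: write optimality as the subdifferential inclusion \eqref{CN1}, compute $D[A_1](p^*)$ by introducing the backward adjoint state \eqref{AdjWaveEqn} and integrating by parts twice in time, and obtain sufficiency from convexity. If anything you are more careful than the paper on one point it silently glosses over, namely that the inclusion should in principle carry a normal-cone term $N_{\mathcal{P}_K(\W)}(p^*)$ coming from the constraints $p_0\geq 0$ and $\operatorname{supp}(p_0)\subset K$, which the paper's statement of \eqref{CN2} does not display.
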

The optimality condition above is in fact sufficient by an easy convexity argument.
\\
{\rem The computation of $ \partial TV(p^*)$ is standard in a finite dimensional setting, either using a primal-dual algorithm or performing an approximation of $TV(p^*)$ }. 
%%%%%%%%%%%%%%%%%%%%%%%%%%%%%%%%%%%%%%%%%%%%%%%%%%%%%%%%%%%%%%%%%%%%%%%%%%%%%%%%%%%%%%%%%%%%%%%%%%%%%%%%
\subsection{ Analysis of Problem \eqref{ODP2}}\label{sec:existODP}
We now focus on the \textit{optimal design} problem \eqref{ODP2}. To perform the analysis, we give an equivalent formulation of $A_2(\1e_\Sigma,p_0)$ using the particular form of $\1e_\Sigma$. Indeed, for all $x\in \Sigma$, we will say that $\Gamma$ is a subset of $\partial\Omega$ associated to $\Sigma\in \U$ whenever
\begin{equation}\label{SigmaGamma}
\1e_\Sigma(x)=\1e_\Gamma(s+\mu \nu(s))=\1e_{\Gamma}(s), \qquad \text{for a.e. }x\in\widetilde{\partial\Omega}, \ s\in\partial\Omega, \ \mu\in [0,\varepsilon]
\end{equation}
for some $\1e_\Gamma\in \mathcal{V}_L$ where $\1e_\Gamma$ is the characteristic function of a subset $\Gamma$ of $\partial\Omega$. 
Using Fubini-Tonelli-Lebesgue theorem, one has
\begin{eqnarray}
A_2(\1e_\Sigma,p_0)&=& \frac{1}{\Vert p_0 \Vert_{H^1(\Omega)}^2} \int_0^T \int_{\B} \1e_{\Sigma} (x) \partial_t p_{[p_0]}(t,x)^2 \, dx \, dt\nonumber \\
&=& \frac{1}{\Vert p_0 \Vert_{H^1(\Omega)}^2} \int_{\partial\Omega} \1e_{\Gamma} (s)\psi_{[p_0]}(s) \, d\Hn \label{n2201}
\end{eqnarray}
where 
\begin{equation}\label{n2202}
\psi_{[p_0]}(s)=\int_0^T \int_0^\varepsilon \partial_t p_{[p_0]}(t,s+\mu\nu(s))^2\, d\mu\, dt
\end{equation}
Indeed, it follows directly from the regularity property of $p_{[p_0]}$ that $\psi_{[p_0]}\in L^1(\partial\Omega)$.
 Next theorem gives an existence result: 

\begin{theorem}
Let $\widetilde{p}_0 \in H^1(\Omega)$ be a solution of Problem \eqref{MainPbOpt}. Then, the optimal design problem \eqref{ODP2} has at least one solution.
Moreover, there exists a real number $\lambda$ such that $\Sigma^*$ is a solution of \eqref{ODP2} if and only if $\Sigma^*$ is associated to $\Gamma^*$ (in the sense of \eqref{SigmaGamma}) where
\begin{equation}\label{omegastar}
\1e_{\{\psi_{[\widetilde{p}_0]}(s) > \lambda\}} \leq \1e_{\Gamma^*}(s) \leq \1e_{\{ \psi_{[\widetilde{p}_0]}\geq \lambda\}}(s),\qquad \text{for a.e. }s\in \partial\Omega .
\end{equation}
\end{theorem}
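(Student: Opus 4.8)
The plan is to exploit the fact that, once $\widetilde{p}_0$ is frozen, Problem \eqref{ODP2} reduces to the maximization of a \emph{linear} functional of $\1e_\Gamma$ under a single measure constraint, a situation governed by the bathtub principle. First I would invoke the rewriting \eqref{n2201}: since $\widetilde{p}_0$ is fixed (and nonzero, so that $\Vert \widetilde{p}_0\Vert_{H^1(\Omega)}^2>0$ and $A_2$ is well defined), the denominator is a positive constant, while the correspondence $\1e_\Sigma\leftrightarrow\1e_\Gamma$ of \eqref{SigmaGamma} is a bijection between $\U$ and $\V$. Writing $\psi:=\psi_{[\widetilde{p}_0]}\in L^1(\partial\Omega)$ (the integrability being recorded after \eqref{n2202}), Problem \eqref{ODP2} is thus equivalent to maximizing $\int_{\partial\Omega}\1e_\Gamma(s)\,\psi(s)\,d\Hn(s)$ over $\1e_\Gamma\in\V$.

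Next I would construct the threshold $\lambda$. Introducing the distribution function $m(t)=\Hn(\{\psi> t\})$, which is nonincreasing and right-continuous, decreasing from $\Hn(\partial\Omega)$ (for $t<0$, since $\psi\geq 0$) to $0$ as $t\to+\infty$, I would set $\lambda=\inf\{t\in\R:\ m(t)\leq L\,\Hn(\partial\Omega)\}$. Right-continuity then gives $\Hn(\{\psi>\lambda\})\leq L\,\Hn(\partial\Omega)$, while taking $t\uparrow\lambda$ in $m(t)>L\,\Hn(\partial\Omega)$ and using $\{\psi>t\}\downarrow\{\psi\geq\lambda\}$ yields $L\,\Hn(\partial\Omega)\leq\Hn(\{\psi\geq\lambda\})$.

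The heart of the argument is a single identity. For any $\1e_\Gamma\in\V$ the constraint $\int_{\partial\Omega}\1e_\Gamma\,d\Hn=L\,\Hn(\partial\Omega)$ gives
\begin{equation*}
\int_{\partial\Omega}\1e_\Gamma\,\psi\,d\Hn=\int_{\partial\Omega}\1e_\Gamma\,(\psi-\lambda)\,d\Hn+\lambda L\,\Hn(\partial\Omega),
\end{equation*}
so maximizing the left-hand side amounts to maximizing $\int_{\partial\Omega}\1e_\Gamma(\psi-\lambda)\,d\Hn$. Splitting this integral over $\{\psi>\lambda\}$, $\{\psi=\lambda\}$ and $\{\psi<\lambda\}$ and using $0\leq\1e_\Gamma\leq 1$ with the sign of $\psi-\lambda$ on each piece gives $\int_{\partial\Omega}\1e_\Gamma(\psi-\lambda)\,d\Hn\leq\int_{\{\psi>\lambda\}}(\psi-\lambda)\,d\Hn$, with equality \emph{if and only if} $\1e_\Gamma=1$ a.e. on $\{\psi>\lambda\}$ and $\1e_\Gamma=0$ a.e. on $\{\psi<\lambda\}$, i.e. precisely condition \eqref{omegastar}. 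This single chain delivers both implications of the stated characterization at once.

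It remains to exhibit one admissible maximizer, which simultaneously proves existence: I would take $\Gamma^*=\{\psi>\lambda\}\cup E$ with $E\subset\{\psi=\lambda\}$ of measure $\Hn(E)=L\,\Hn(\partial\Omega)-\Hn(\{\psi>\lambda\})$, a number lying in $[0,\Hn(\{\psi=\lambda\})]$ by the choice of $\lambda$; then $\1e_{\Gamma^*}\in\V$ and satisfies \eqref{omegastar}, hence is optimal. The main obstacle is exactly this selection step: when $\{\psi=\lambda\}$ has positive $\Hn$-measure, one must extract from it a subset of prescribed measure, which forces the use of the nonatomicity of $\Hn$ restricted to $\partial\Omega$. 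This is where the regularity of $\partial\Omega$ enters: being a Lipschitz (here $\mathcal{C}^1$) hypersurface, $\Hn\restriction\partial\Omega$ is diffuse, and a standard measurable-exhaustion argument then produces $E$. All remaining points are routine.
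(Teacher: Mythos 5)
Your proposal is correct and follows essentially the same route as the paper: the paper's proof consists of the reduction to the linear form \eqref{n2201} followed by ``a standard argument of decreasing rearrangement'' deferred to a citation of \cite{PTZcomplexity}, and your bathtub-principle argument (threshold $\lambda$ from the distribution function, the inequality $\int\1e_\Gamma(\psi-\lambda)\leq\int_{\{\psi>\lambda\}}(\psi-\lambda)$ with its equality case, and the selection of $E\subset\{\psi=\lambda\}$ via nonatomicity of $\Hn$) is exactly that argument written out in full. The only addition worth noting is that you correctly flag the implicit hypothesis $\widetilde{p}_0\neq 0$, which the paper leaves unstated.
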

\begin{proof}
The proof is a direct adaptation of \cite[Theorem 1]{PTZcomplexity}. It lies directly upon the fact that the functional $A_2$ rewrites as \eqref{n2201}, as well as a standard argument of decreasing rearrangement. Another convexity argument can be used to get this result, observing that the functional
$$
\tilde A_2: L^\infty(\partial\Omega,[0,1])\ni \rho \mapsto \int_{\partial\Omega} \rho (s)\psi_{[p_0]}(s) \, d\Hn
$$
is continuous for the weak-star topology of $L^\infty$ and that the set
$$
\mathcal{C}_L=\left\{\rho \in L^\infty(\partial\Omega,[0,1])\mid \int_{\partial\Omega} \rho =L\Hn(\partial\Omega)\right\}
$$
is compact for this topology. It follows that the problem 
$$
\inf \left\{ \int_{\partial\Omega} \rho (s)\psi_{[p_0]}(s) \, d\Hn, \ \rho \in \mathcal{C}_L\right\}
$$
has at least one solution. Moreover, one shows easily that the solution can be chose among the extremal points of the convex set $\mathcal{C}_L$, by using the convexity of the mapping $\tilde A_2$.
\end{proof}

\subsection{Analysis of Problem \eqref{ODP3}}\label{sec:existODP3}

This section is devoted to the resolution of  Problem \eqref{ODP3} in the two-dimensional case, where the considered sensors are the union of $N_0$ \textit{similar} connected components (the definition of the sensors set has been precised with \eqref{def:UN0eps}).
%Recall that $\Omega$ has for polar equation $r=\rho(\theta)$ in a fixed orthonormal basis of $\R^2$ centered at $O$, where $\rho$ is a Lipschitz function of the one-dimensional torus $\T$.
%
Let us define
\begin{equation}\label{def:fpsi}
f_{[\widetilde{p}_0]}:\T\ni \theta\mapsto \psi_{[\widetilde{p}_0]}(\rho(\theta)\cos \theta,\rho(\theta)\sin\theta).
\end{equation}
where $\psi_{[\widetilde{p}_0]}$ is given by \eqref{n2202}.

\begin{theorem}\label{theo:ODP3}
Let $\widetilde{p}_0 \in H^1(\Omega)$ be a solution of Problem \eqref{MainPbOpt}. Then, the optimal design problem \eqref{ODP3} has at least a solution $\Sigma^{N_0,\ell}$. 
Let $\Gamma^{N_0,\ell}$ (resp. $(\theta_n)_{n\in \{1,\dots,N_0\}}\in \T^{N_0}$) be the associated set of $\partial\Omega$ in the sense of \eqref{eqSigmaGamma2} (resp. the associated family of angles in the sense of \eqref{eqSigmaGamma2bis}). Then, 
\begin{enumerate}
\item if $N_0=1$, then, $f_{[\widetilde{p}_0]}(\theta_1)=f_{[\widetilde{p}_0]}(\hat{\theta}_1)$.
\item if $N_0\geq 2$, let $n\in \{1,\dots,N_0-1\}$. One has the following alternative: either $\hat{\theta}_n=\theta_{n+1}$, or $f_{[\widetilde{p}_0]}(\theta_n)=f_{[\widetilde{p}_0]}(\hat{\theta}_n)$. 
\end{enumerate}
If one assumes furthermore that $\rho$ belongs to $C^2(\T)$, then one has
$$
\frac{f_{[\widetilde{p}_0]}'(\hat{\theta}_n)}{\sqrt{\rho(\hat{\theta}_n)^2+\rho'(\hat{\theta}_n)^2}}-\frac{f_{[\widetilde{p}_0]}'(\theta_n)}{\sqrt{\rho(\theta_kn^2+\rho'(\theta_n)^2}}\leq 0
$$ 
whenever $N_0=n=1$ or $N_2\geq 2$ and $n\in \{1,\dots,N_0-1\}$ is such that $\hat{\theta}_n<\theta_{n+1}$.
\end{theorem}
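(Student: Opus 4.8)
The plan is to reduce the design problem \eqref{ODP3} to a finite-dimensional maximization over the angle family $(\theta_n)_{n\in\{1,\dots,N_0\}}$ and then to write down its first- and second-order optimality conditions. Writing $g(\theta):=\sqrt{\rho(\theta)^2+\rho'(\theta)^2}$ for the arclength density and $F:=f_{[\widetilde p_0]}$, the rewriting \eqref{n2201}--\eqref{n2202} together with the polar parametrization \eqref{def:fpsi} turns the numerator of $A_2(\1e_\Sigma,\widetilde p_0)$ into
\[
\sum_{n=1}^{N_0}\int_{\theta_n}^{\hat\theta_n} F(\theta)\,g(\theta)\,d\theta =: G(\theta_1,\dots,\theta_{N_0}),
\]
the denominator $\Vert\widetilde p_0\Vert_{H^1}^2$ being a fixed positive constant. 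Thus \eqref{ODP3} is equivalent to maximizing $G$ over the set of nondecreasing families satisfying \eqref{constraint:sensbis}, where each $\hat\theta_n$ is the increasing function of $\theta_n$ defined by \eqref{eq:perim}. Existence (first assertion) then follows at once: the admissible set is a closed, hence compact, subset of $\T^{N_0}$ (nonempty because $\ell N_0<\int_\T g$), and $G$ is continuous since $F\in L^1(\partial\Omega)$; a maximizer $\Sigma^{N_0,\ell}$ exists. A clean way to organize the optimality analysis is to pass to the arclength coordinate $s=\int_0^\theta g$: then $G=\sum_n\Phi(s_n)$ with $\Phi(s)=\int_s^{s+\ell}\widetilde F$, $\widetilde F(s)=F(\theta(s))$, and the constraints \eqref{constraint:sensbis} become the linear separations $s_{n+1}-s_n\ge\ell$.

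For the first-order conditions I would differentiate \eqref{eq:perim}, which gives $\hat\theta_n'(\theta_n)=g(\theta_n)/g(\hat\theta_n)$, and then apply Leibniz's rule to obtain
\[
\frac{\partial G}{\partial\theta_n}=g(\theta_n)\bigl(F(\hat\theta_n)-F(\theta_n)\bigr),
\]
equivalently $\Phi'(s_n)=\widetilde F(s_n+\ell)-\widetilde F(s_n)$, using $g>0$. When $N_0=1$ the single angle $\theta_1$ is an interior (free) variable, so stationarity forces $F(\hat\theta_1)=F(\theta_1)$, which is assertion~1. When $N_0\ge2$, I would perturb $\theta_n$ while freezing the other angles: if the constraint between arcs $n$ and $n+1$ is inactive, i.e. $\hat\theta_n<\theta_{n+1}$, and $\theta_n$ is free to move in both directions, stationarity again gives $F(\hat\theta_n)=F(\theta_n)$; otherwise $\hat\theta_n=\theta_{n+1}$ and the first branch of the alternative holds. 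This is assertion~2.

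The main obstacle is precisely the coupling of the inequality constraints \eqref{constraint:sensbis}: several of them may be active at once, so that consecutive arcs cluster and an arc with free room on its right may still be pinned on its left. The rigorous extraction of the pointwise alternative therefore requires an active-set/KKT argument: introduce multipliers $\mu_n\ge0$ for the constraints $s_{n+1}-s_n\ge\ell$ with complementary slackness, so that $\Phi'(s_n)=\mu_{n-1}-\mu_n$ (with $\mu_0=\mu_{N_0}=0$), and analyze the admissible one-sided and block perturbations cluster by cluster. This bookkeeping --- ensuring that for every index $n$ with $\hat\theta_n<\theta_{n+1}$ one can build an admissible two-sided variation yielding the equality $F(\hat\theta_n)=F(\theta_n)$ --- is the delicate part of the proof, and the place where the convexity of $\Omega$ (guaranteeing the bijectivity of the parametrization, hence that the perturbed family still describes an admissible $\Sigma$) is used.

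Finally, for the second-order statement I would assume $\rho\in C^2(\T)$, so that $g\in C^1$, and (granting the extra regularity of $\psi_{[\widetilde p_0]}$, hence $F\in C^1$, inherited from the regularity of $p_{[\widetilde p_0]}$) differentiate once more:
\[
\frac{\partial^2 G}{\partial\theta_n^2}=g'(\theta_n)\bigl(F(\hat\theta_n)-F(\theta_n)\bigr)+g(\theta_n)\Bigl(F'(\hat\theta_n)\tfrac{g(\theta_n)}{g(\hat\theta_n)}-F'(\theta_n)\Bigr).
\]
At an index for which the first-order equality $F(\hat\theta_n)=F(\theta_n)$ holds, the first term vanishes. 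When the arc is free to move --- that is, when $N_0=n=1$, or $N_0\ge2$ and $\hat\theta_n<\theta_{n+1}$ --- the second-order necessary condition for a maximum reads $\partial^2 G/\partial\theta_n^2\le0$; dividing by $g(\theta_n)>0$ gives exactly
\[
\frac{F'(\hat\theta_n)}{g(\hat\theta_n)}-\frac{F'(\theta_n)}{g(\theta_n)}\le0,
\]
which is the announced inequality (in arclength coordinates it is simply $\widetilde F'(s_n+\ell)\le\widetilde F'(s_n)$). The only genuine regularity input to verify here is the $C^1$ smoothness of $F=f_{[\widetilde p_0]}$, which must be traced back to interior regularity of the wave solution $p_{[\widetilde p_0]}$.
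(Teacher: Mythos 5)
Your proposal follows essentially the same route as the paper: recast \eqref{ODP3} as the finite-dimensional maximization of $\sum_n\int_{\theta_n}^{\hat\theta_n}f_{[\widetilde p_0]}\sqrt{\rho^2+\rho'^2}$ over the compact set $\Theta_{L,N_0}^\ell$, get existence from compactness and continuity, then derive the stated alternative and the second-order inequality from the first- and second-order optimality conditions using $\partial\hat\theta_n/\partial\theta_n=\sqrt{\rho(\theta_n)^2+\rho'(\theta_n)^2}/\sqrt{\rho(\hat\theta_n)^2+\rho'(\hat\theta_n)^2}$. If anything you are more explicit than the paper about the one point it glosses over --- the paper simply invokes Karush--Kuhn--Tucker to get $\partial J/\partial\theta_n=0$ when $\hat\theta_n<\theta_{n+1}$, whereas you flag that the left-hand constraint $\hat\theta_{n-1}\leq\theta_n$ may be active and propose the multiplier/cluster bookkeeping needed to handle it.
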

\begin{proof}
We first assume, without loss of generality that $N_0\geq 2$, the case where $N_0=1$ being easily inferred by adapting the following reasoning. In what follows, we will use several times that $\rho(\theta)>0$ for all $\theta\in \T$, which is a consequence of the fact that the point $O$ (center of the considered orthonormal basis) belongs to the open set $\Omega$.

First, using the same computations as those at the beginning of Section \ref{sec:existODP}, we claim that Problem \eqref{ODP3} can be recast as
\begin{equation}\label{ODP3bisbis}
\sup_{(\theta_1,\dots,\theta_{N_0})\in \Theta_{L,N_0}^\ell}J(\theta_1,\dots,\theta_{N_0})
\end{equation}
where
\begin{equation}
J(\theta_1,\dots,\theta_{N_0})=\sum_{n=1}^{N_0}\int_{\theta_{n}}^{\hat{\theta}_{n}}f_{[\widetilde{p}_0]}(\theta)\sqrt{\rho(\theta)^2+\rho'(\theta)^2}\ d\theta,
\end{equation}
with $f_{[\widetilde{p}_0]}$ defined by \eqref{def:fpsi}, $\hat{\theta}_n$ defined by \eqref{eq:perim} and
\begin{equation}
\Theta_{L,N_0}^\ell \!=\left\{(\theta_1,\dots,\theta_{N_0})\in \T^{N_0}\mid \int_{\theta_n}^{\theta_{n+1}}\!\!\!\!\sqrt{\rho(s)^2+\rho'(s)^2}\, ds\geq \ell, ~ n=1,\dots,N_0-1\right\}.
\end{equation}
Existence of a solution of this problem is standard and follows immediately from both the compactness of $\Theta_{L,N_0}^\ell $ in $\T^{N_0}$ and the continuity of the functional.
Let $(\theta_1,\dots,\theta_{N_0})\in \Theta_{L,N_0}^\ell$ be a solution of the optimization problem \eqref{ODP3bisbis}. Let us assume the existence of $n\in \{1,\dots,N_0-1\}$ such that $\hat{\theta}_n<\theta_{n+1}$, in other words such that
\begin{equation}\label{constraint:1020}
\int_{\theta_n}^{\theta_{n+1}}\sqrt{\rho(s)^2+\rho'(s)^2}\, ds> \ell.
\end{equation}

Then, it follows from the Karush-Kuhn-Tucker theorem that $\frac{\partial J}{\partial\theta_n}(\theta_1,\dots,\theta_{N_0})=0$, which rewrites
$$
\sqrt{\rho(\theta_n)^2+\rho'(\theta_n)^2}f_{[\widetilde{p}_0]}(\theta_n)=f_{[\widetilde{p}_0]}(\hat{\theta}_n)\sqrt{\rho(\hat{\theta}_n)^2+\rho'(\hat{\theta}_n)^2}\frac{\partial\hat{\theta}_n}{\partial\theta_n}(\theta_n),
$$
simplifying into
\begin{equation}\label{eq2055}
f_{[\widetilde{p}_0]}(\theta_n)=f_{[\widetilde{p}_0]}(\hat{\theta}_n),
\end{equation}
by using that 
\begin{equation}\label{dhattheta}
\frac{\partial\hat{\theta}_n}{\partial\theta_n}(\theta_n)=\frac{\sqrt{\rho(\theta_n)^2+\rho'(\theta_n)^2}}{\sqrt{\rho(\hat{\theta}_n)^2+\rho'(\hat{\theta}_n)^2}},
\end{equation}
according to the combination of \eqref{eq:perim} with the implicit functions theorem. Furthermore, assuming that $\rho$ is a $C^2$ function, the necessary second order optimality conditions write
$$
\frac{\partial^2 J}{\partial\theta_n^2}(\theta_1,\dots,\theta_{N_0})\leq 0,
$$
which comes to
\begin{multline}
-\frac{\rho'(\theta_n)\left(\rho(\theta_n)+\rho''(\theta_n)\right)}{\sqrt{\rho(\theta_n)^2+\rho'(\theta_n)^2}}f_{[\widetilde{p}_0]}(\theta_n)-f_{[\widetilde{p}_0]}'(\theta_n)\sqrt{\rho(\theta_n)^2+\rho'(\theta_n)^2}\\
+\frac{\partial\hat{\theta}_n}{\partial\theta_n}(\theta_n)f_{[\widetilde{p}_0]}'(\hat{\theta}_n)\sqrt{\rho(\theta_n)^2+\rho'(\theta_n)^2}+\frac{\rho'(\theta_n)\left(\rho(\theta_n)+\rho''(\theta_n)\right)}{\sqrt{\rho(\theta_n)^2+\rho'(\theta_n)^2}}f_{[\widetilde{p}_0]}(\hat{\theta}_n)\leq 0\label{eq2054}
 \end{multline}
Combining this inequality with both the optimality condition \eqref{eq2055} and the relation \eqref{dhattheta} yield the desired result.

\end{proof}

%\yp{on va se servir de ces conditions d'optimalit\'e pour v\'erifier que l'algo fournit un max local.\\ ajouter des exemples de figures montrant qu'il peut y avoir plein de max locaux, ce qui justifie d'utiliser des algo g\'en\'etiques, et d\'ecrire un peu l'algo utilis\'e dans la section suivante}
%=========================
 \section{Numerical simulations}\label{sec:num}
 
We are now interested in the numerical resolution of the problems following the strategy provided by \eqref{MainPbOpt}-\eqref{ODP2}. However, as explained previously, we are going to perform some simplification of the model in view of the numerical implementation. \\

For instance, we assume here that the velocity $c(\cdot)$ is constant and equals to $1$. 
Noting that its influence has not a major importance in the methodology and since the purpose of this article is to validate the methodology introduced in section \ref{sec:criteriachoices}, this simplification allows us to compute in a simple way the solution of the wave equation in the Fourier space and then  obtain a high resolution in time and space.
 
The minimization of functional $J^0$ (Problem \eqref{MainPbOpt})
is performed by using a splitting gradient step descent. More precisely, the idea is to alternate an explicit treatment 
of $A_1(1_{\Sigma},p_0) $ via a time reversal imaging \cite{bretinTR-ContempMath,Time_reversal_BLP,Timereversal2,Timereveral1}, 
and an implicit treatment of the $TV$ term thanks to the algorithm introduced by A. Chambolle in \cite{chambolleTV}. 
Moreover, in practice, we also apply the FISTA strategy \cite{Beck_teboulle} to accelerate 
the convergence of this splitting iterative gradient scheme. 
 
About the determination of the best location of sensors $\Gamma^*$, the idea is to compute 
the energy function $\psi_{[p_0]}$ defined for all $s \in \partial \Omega$ by 
$$ \psi_{[p_0]}(s) = \int_0^{T} \partial_{t} p_{[p_0]}(t,s)^2 dt.$$
\paragraph{$\bullet$ For Problem \eqref{ODP2} (continuous setting):}
We  apply Theorem 3.3, which shows that $\Gamma^*$ is expected to have the form 
$$\Gamma^{*}_{\lambda} = \{ s \in \partial \Omega \mid \psi_{[p_0]}(s) \geq \lambda\}.$$
It is also a simplification since $\psi_{[p_0]}$ is observed only on $\partial \Omega$ and 
the optimal constant $\lambda$ is determined by using a dichotomy approach,   looking for $\lambda$ such that the constraint $\Hn(\Gamma^{*}_{\lambda}) = L \Hn(\partial \Omega)$ be satisfied. 

\paragraph{$\bullet$ For Problem \eqref{ODP3} (discrete setting, with a maximal number of connected components):}
although Theorem \ref{theo:ODP3} provides a partial characterization of the solution, we observed numerically the existence of numerous local optima. For this reason, we chose to implement a genetic algorithm procedure.
More precisely, we solve Problem \eqref{ODP3bisbis} by using the Matlab function \textsf{ga} allowing to solve a finite dimensional optimization problem with a genetic algorithm. It is notable that, practically speaking, it is not worth to take into account the constraint \eqref{constraint:1020} since it is naturally satisfied at the optimum.

\bigskip

We  next recall several standard results about {\it time reversal Imaging} in Section \eqref{subsect_TR}. 
Moreover, in Section \eqref{subsect_GradientJO}, we show how the gradient of the functional $A_1(1_{\Sigma},p_0)$ can be computed thanks to a generalized time reversal imaging technique.
We also presents some classical discretization of the wave equation and its integration in Fourier space 
\eqref{sec:discretization_wave_equation}. Finally, some numerical illustrations of our approach are 
presented in Section \ref{sec:numerical_experiment}, which highlights the improved reconstruction of the source 
$p^0$ when we optimizing the location of the sensors. 

 \subsection{Time reversal imaging} \label{subsect_TR}

Recall that $p_{[p_0]}$ denotes the solution of Problem \eqref{WaveEqn} for the initial datum $p_0$.
% defined as the solution of 
%\begin{equation}
% \left\{\begin{array}{ll}
% \partial_{tt} p(t,x) - \Delta p(t,x) = 0, & (t,x) \in \R^d \times \R_+, \\
% p(0,x) = p_0(x), \quad \partial_t p(0,x) = 0, & x \in \R^d.
% \end{array}\right.
%\end{equation}
If the given data $p_{obs}$ are complete on the boundary of $\Omega$ i.e
$$
p_{obs}(t,y) = p_{[p_0]}(t,y)\quad \text{ for all }(t,y) \in [0,T]\times \partial \Omega,
$$ 
then, the reconstruction of the initial source $p_0$ from data $g = p_{obs}$ can be done following time reversal imaging method, by using that 
 $$ p_0(\cdot) \simeq \I[p_{obs}](\cdot) = w(T,\cdot),$$
 where $w$ is defined as the solution of the (backward) wave equation
 $$
 \begin{cases}
 \partial_{tt} w(t,x) - \Delta w(t,x) = 0, & (t,x) \in [0,T] \times \Omega, \\
 w(0,x) = \partial_t w(0,x) = 0, & x \in \Omega, \\
 w(t,y) = p_{obs}(y,T-t), & t \in [0,T].
 \end{cases}
 $$
 More precisely,  $T$ is required to be sufficiently large to satisfy $u(T,\cdot) \simeq 0$ and $\partial_t u(T,\cdot) \simeq 0$ on $\Omega$ \cite{Timereversal2}.
 However, as explained in \cite{bretinTR-ContempMath}, the discretization of this imaging functional requires  data interpolation on the boundary of $\Omega$: this introduces smoothing effects on the reconstructed image (identical to the use of the penalization term $\mathcal{R}(p_0)$  in Problem \eqref{pbOptnaive}). In practice, it is more efficient to use an approximation version reading 
 $$\I[p_{obs}](x) = \int_{0}^{T} v_s(T,x) ds,$$
 where $v_s$ solves the wave equation 
 $$ \begin{cases}
 \partial^2_{tt} v_s(t,x) - \Delta v_{s}(t,x) = \partial_t \left( \delta_{\{t=s\}} g(x,T-s) \right) \delta_{\partial \Omega}, \quad (t,x) \in \R \times \R^d  \\
 v_s(t,x) = 0, \quad \partial_t v_s(t,x) = 0, \quad x \in \R^d, t<s.
 \end{cases}
 $$
 Here, $\delta_{\{t=s\}}$ denotes the time Dirac distribution at time $t=s$ and $\delta_{\partial \Omega}$ is the surface Dirac measure on the manifold $\partial\Omega$. 
  
 In particular, by using the so-called Helmholtz-Kirchhoff identity, it is proved in \cite{ammari2008introduction} 
 that when $\Omega$ is close to a sphere with large radius in $\R^d$, there holds
 $$p_0(x) \simeq \I[p_{obs}](x).$$
 
 An other advantage of this modified time reversal imaging technique is its variational character. Indeed, recall that 
 $p_{[p_0]}$ can be expressed as 
 $$ p_{[p_0]}(\cdot) = \partial_t \mathbf{G}(t,\cdot)*p_0,$$
 where $*$ is the convolution product in space,
 $\mathbf{G}$ the temporal Green function obtained as the inverse Fourier transform of $ {G}_{\omega}$ 
 $$\mathbf{G}(t,\cdot) = \FF^{-1}_t [G_{\omega}(\cdot)](t),$$
 where $G_{\omega}$ denotes the outgoing fundamental solution to the Helmholtz operator $- (\Delta + \omega^2)$ in $\R^d$, that is the distributional solution of the equation
 $$ 
 (\Delta + \omega^2) G_{\omega}(x) = -\delta_{\{x=0\}} \qquad x\in \R^d
 $$
 subject to the outgoing Sommerfeld radiation equation 
 $$ \lim _{{|x|\to \infty }}|x|^{{{\frac {d-1}{2}}}}\left({\frac {\partial }{\partial |x|}}-i\omega\right)u(x)=0.$$
The discrepancy functional $A_1$ defined by \eqref{defA1} can then be recast as
 $$ A_1(\1e_\Sigma,p_0) = \frac{1}{2} \int_{0}^{T} \int_{\B} \1e_{\Sigma}(y) ( (\partial_t \mathbf{G}(t,.)*p_0(.))(y) - p_{obs}(t,y))^2 dy \,dt.$$
Moreover, one shows easily that its G\^ateaux-derivative with respect to the variable $p_0$, defined by 
$$
\langle dA_1(\1e_\Sigma,p_0) ,h\rangle=\lim_{\tau \searrow 0}\frac{A_1(\1e_\Sigma,p_0 + \tau  h)-A_1(\1e_\Sigma,p_0)}{\tau }
$$
writes 
$$
\langle dA_1(\1e_\Sigma,p_0) ,h\rangle=\int_{\Omega}\nabla A_1(\1e_\Sigma,p_0) (x) h(x)\, dx,
$$
where $\nabla A_1(\1e_\Sigma,p_0)$ is the gradient with respect to $p_0$, identified to 
 $$ \nabla A_1(\1e_\Sigma,p_0) = \int_{0}^{T} \partial_t \mathbf{G}(t,.)*\left[ (p_{[p_0]}(t,\cdot) - p_{obs}(t,\cdot)) \1e_\Sigma(\cdot) \right] dt$$
or
 \begin{equation}\label{nablaA1}
 \nabla A_1(\1e_\Sigma,p_0) = \int_{0}^{T} v_s(T,\cdot)\, ds,
 \end{equation}
 where $v_s$ is solution of 
 $$ \begin{cases}
 \partial^2_{tt} v_s(t,x) - \Delta v_{s}(t,x) = \partial_t \left( \delta_{\{t=s\}} \left[ p_{[p_0]}(T-s,\cdot) - p_{obs}(T-s,\cdot) \right] \right) \1e_\Sigma(\cdot), \\
 v_s(t,x) = 0, \quad \partial_t v_s(t,x) = 0, \quad x \in \R^d, t<s.
 \end{cases}
 $$
This claim follows from an straightforward adaptation of the proof of Theorem \ref{thm:optcondpb1}.
In particular, this shows that the gradient $\nabla A_1(\1e_\Sigma,p_0)$ corresponds to the modified 
time reversal imaging associated to the data $p_{[p_0]} - p_{obs}$,
 where the Dirac mass $\delta_{\partial \Omega}$ is replaced by the characteristic function $\1e_\Sigma$ 
 and 
 $$ \nabla A_1(\1e_\Sigma,p_0) = \I[p_{[p_0]} - p_{obs}].$$
 
%%%%%%%%%%%%%%%%%%%%%%%%%%%%%%%%%%%
%%%%%%%%%%%%%%%%%%%%%%%%%%%%%%%%%%%
%%%%%%%%%%%%%%%%%%%%%%%%%%%%%%%%%%%

\subsection{Solving Problem \eqref{MainPbOpt}} \label{subsect_GradientJO}
In this section, we focus on numerical algorithms to solve Problem \eqref{MainPbOpt}. Because of the smoothing effects mentioned in Section \ref{subsect_TR}, it is not worth to add a $H^1$-penalization term from a practical point of view. This is why we eventually consider that
$$
 J^0(\1e_\Sigma, p_0) = A_1 (\1e_\Sigma,p_0)+ \gamma TV(p_0).
$$

A first idea is to use a gradient-iterative scheme with an implicit treatment of the $TV$ norm, combined with an explicit treatment of $A_1(1_{\Sigma},p_0)$.
In this context, the simplest iterative scheme is the forward-backward algorithm, reading 
$$ p_0^{n+1} = (I + \eta\gamma \partial TV )^{-1} (p_0^n - \eta\nabla A_1(\1e_{\Sigma},p_0^n)), \qquad n \geq 0,$$
where $p^0 = 0$ and $\eta$ is a given (small positive) descent step.
The $TV$ proximal operator $\operatorname{prox}_{\eta TV}[u]$  is defined by 
$$ \operatorname{prox}_{\eta TV}[u] = (I + \eta \partial TV)^{-1}(u) = \underset{ v \in L^2(\B)}{\operatorname{argmin}} \left\{ \frac{1}{2 \eta} \|u - v \|^2_{L^2(\B)} +  TV(v) \right\}.$$
is computed by using the dual approach introduced by Chambolle in \cite{chambolleTV}.

\bigskip

Finally, the gradient $\nabla A_1(\1e_{\Sigma},p_0)$ is computed via a time reversal imaging approach. Indeed,  $ \nabla A_1(\1e_{\Sigma},p_0)$ given by \eqref{nablaA1}
can also be expressed (using the superposition principle)  as $\nabla A_1(\1e_{\Sigma},p_0) = v_s(T,\cdot)$, with
 $$ \begin{cases}
 \partial^2_{tt} v_s - \Delta v_{s} = F,   \quad(t,x) \in \R^+\times \R^d,\\
 v_s(0,x) = 0 = \partial_t v_s(0,x) = 0, \quad x \in \R^d,
 \end{cases}
 $$
 where the right hand-side term is the measure 
$$
 F = \partial_t \left( \delta_{\{t=s\}} \left[ p_{[p_0]}(T-s,\cdot) - p_{obs}(T-s,\cdot) \right] \right) \1e_{\Sigma}. 
 $$

\subsection{Time reversal Imaging, wave equation and discretization} \label{sec:discretization_wave_equation}

Recall that the direct problem and the time reversal imaging approach only require to solve a Cauchy wave equation on the form
 $$ \begin{cases}
 \partial^2_{tt} v_s(t,x) - \Delta v_{s}(t,x) = F(t,x),  \quad(t,x) \in \R^+\times \R^d, \\
 v_s(0,x) = H_{1}(x), \quad \partial_t v_s(0,x) = H_{2}(x), \quad x \in \R^d,
 \end{cases}
 $$

\paragraph{Comments on numerics.} 
Regarding the numerical discretization, all the wave-like equations are solved in the box $\B=[-D/2,D/2]^2$ with periodic boundary conditions, where $D$ is supposed to be sufficiently large to prevent 
any reflection on the boundary. Numerical integrations of each equation are then performed exactly in the Fourier space. 

We recall that the Fourier truncation of a  two-dimensional function $u$ to the $M$ first modes, in a box $\B = [-D/2,D/2]^2$ is given by 
$$ 
u^{M}(t,x) = \sum_{n_1,n_2 = -[M/2]}^{[M/2]} c_{ {\bf n}}(t) e^{2i\pi {\bf \xi_n}\cdot x}
$$
where ${\bf n} = (n_1,n_2)$, ${\bf \xi_n} = (n_1/D,n_2/D)$ and $[\cdot]$ stands for the integer part function. Here, the coefficients $c_{ {\bf n}}$ stand for the $(2[M/2])^2$ first discrete Fourier coefficients of $u$.
Moreover, we use the inverse fast Fourier transform (denoted $IFFT$) to compute the inverse discrete Fourier transform of $c_{\bf n}$. This leads to $u^{M}_{{\bf n}} = IFFT[c_{\bf n}]$ where $u^{M}_{{\bf n}}$ 
is the value of $u$ at the points $x_{{\bf n}} = (n_1 h,n_2 h)$ where $h =D/M$. \\
Conversely, $c_{ {\bf n}}$ can be computed by applying the discrete Fourier transform to $u^M_{{\bf n}}$: 
$$ c_{ {\bf n}} = FFT[u^M_{{\bf n}}].$$

Now, about computation of wave equation solutions, remark that a generic wave equation 
$$ \partial^2_{tt} u^M(t,x) - \Delta u^M(t,x) = F^M(t,x) = \sum_{n_1,n_2 = -M/2}^{M} f_{ {\bf n}}(t) e^{2i\pi {\bf \xi_n}\cdot x}$$
reads
$$
\partial_t \begin{pmatrix}
 u^M \\
 u^M_t
\end{pmatrix} = 
\begin{pmatrix}
 0& I_d \\
 \Delta& 0 
\end{pmatrix}\begin{pmatrix}
 u^M \\
 u^M_t
\end{pmatrix} + \begin{pmatrix}
 0 \\
 F^M
\end{pmatrix}
$$
which can by simply integrated by solving the linear EDO system
$$
\frac{d}{dt}\begin{pmatrix}
 c_{{\bf n}}(t) \\
 c_{{\bf n}}'(t) 
\end{pmatrix} = 
\begin{pmatrix}
 0& 1 \\
 -4 \pi^2 |\xi_{{\bf n}}|^2 & 0 
\end{pmatrix}\begin{pmatrix}
 c_{{\bf n}}(t) \\
 c_{{\bf n}}'(t) 
\end{pmatrix} + \begin{pmatrix}
 0 \\
 f_{{\bf n}}(t) 
\end{pmatrix}.
$$

\subsection{Numerical experiments} \label{sec:numerical_experiment} 

All the numerical simulations of this section are done with the following set of parameters: 
\begin{itemize}
 \item the set $\Omega$ is a two-dimensional ball of radius $1$
 \item the box $\B = [-D/2,D/2]^d$ has size $D=4$ and the record time is $T=2$;
 \item the set $K$ is a two-dimensional ball of radius $0.85$
 \item we use a regular step discretization with parameters $dt = T/2^{10}$ and $dx = D/2^{9}$.
 \item the thickness parameter $\varepsilon$ is equal to $0.03$. 
 \item the $TV$-parameter $\gamma = 0.01$. 
 \item the descent step $\eta = 0.5$.
 \end{itemize}
 
On Figure \ref{fig:time_reversal_delta}, we provide a first experiment using ideal complete data $ g = p_{obs}$ 
on the whole boundary $\partial\Omega$ as well as the time reversal imaging 
$\I(g)$ for three different values of the pressure $p_0$. It is observed that in each case, the reconstructed source and the exact source are very close. 
\begin{figure}[H]
\begin{center}
	\includegraphics[width=0.3\linewidth]{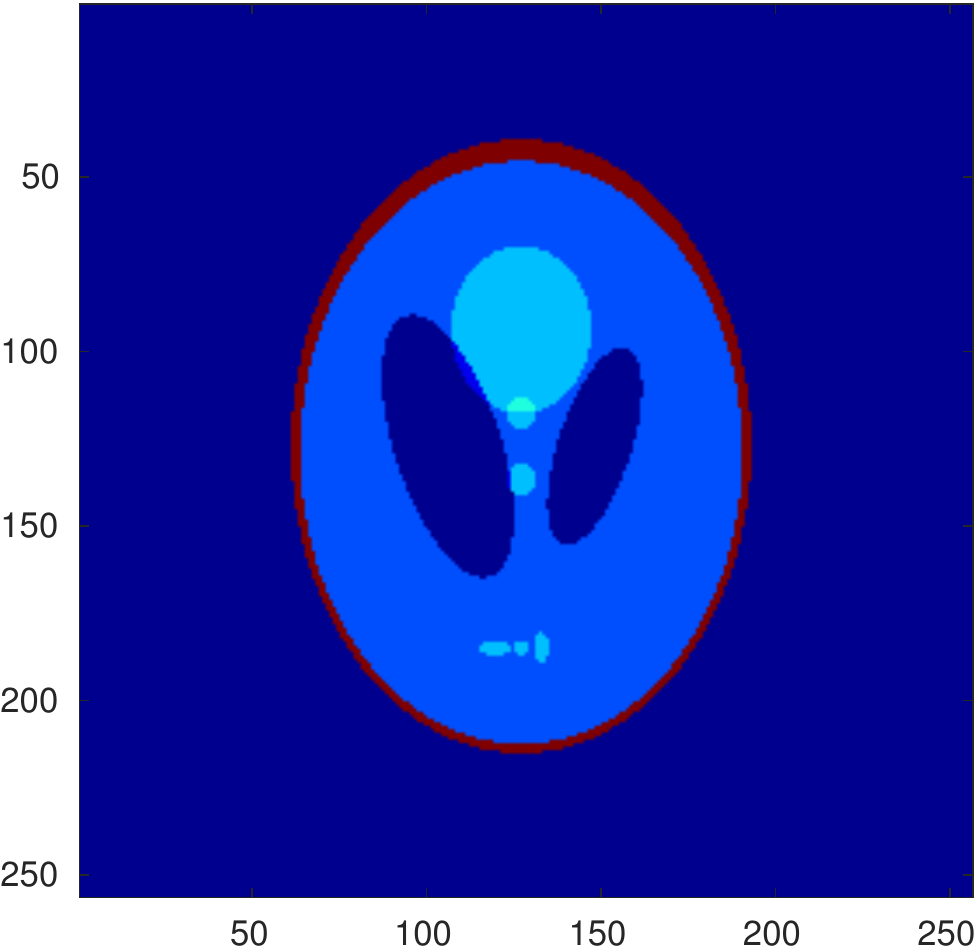}
	\includegraphics[width=0.37\linewidth]{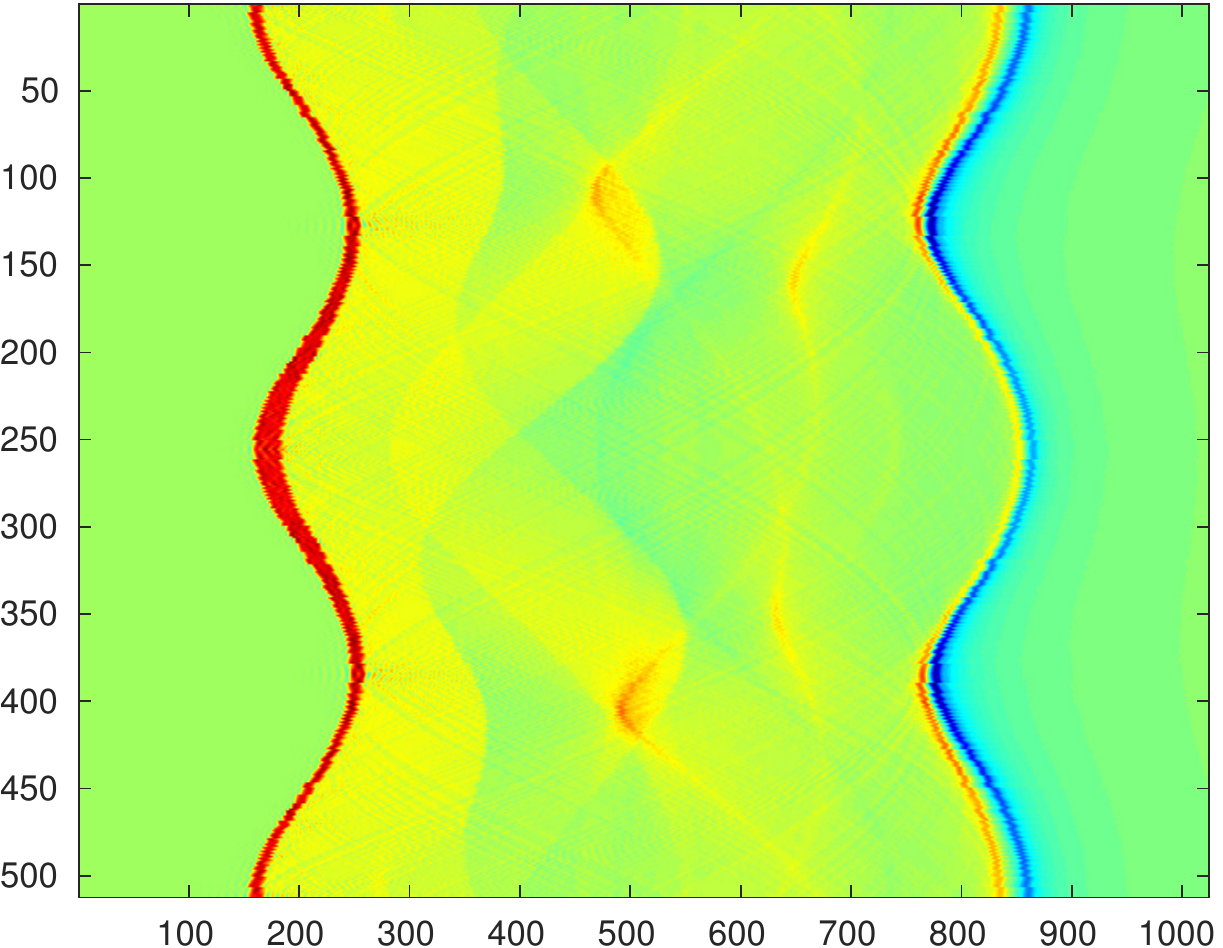} 
	\includegraphics[width=0.3\linewidth]{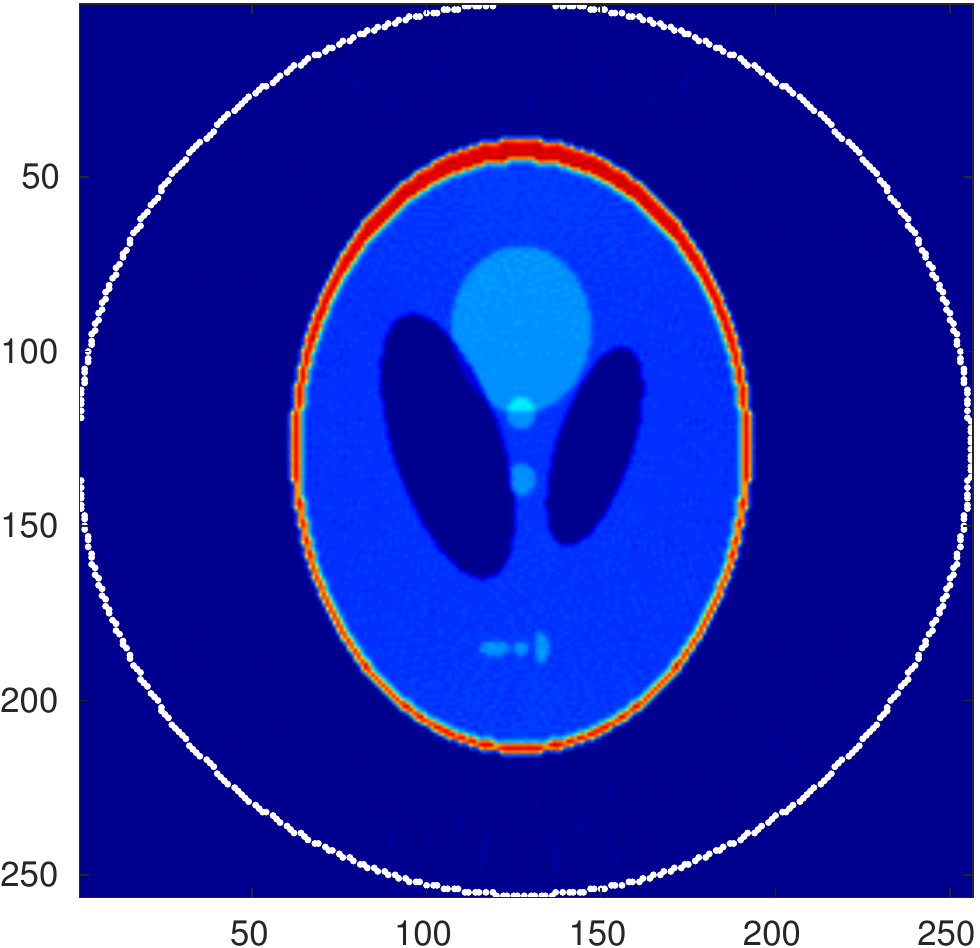} \\
	\includegraphics[width=0.3\linewidth]{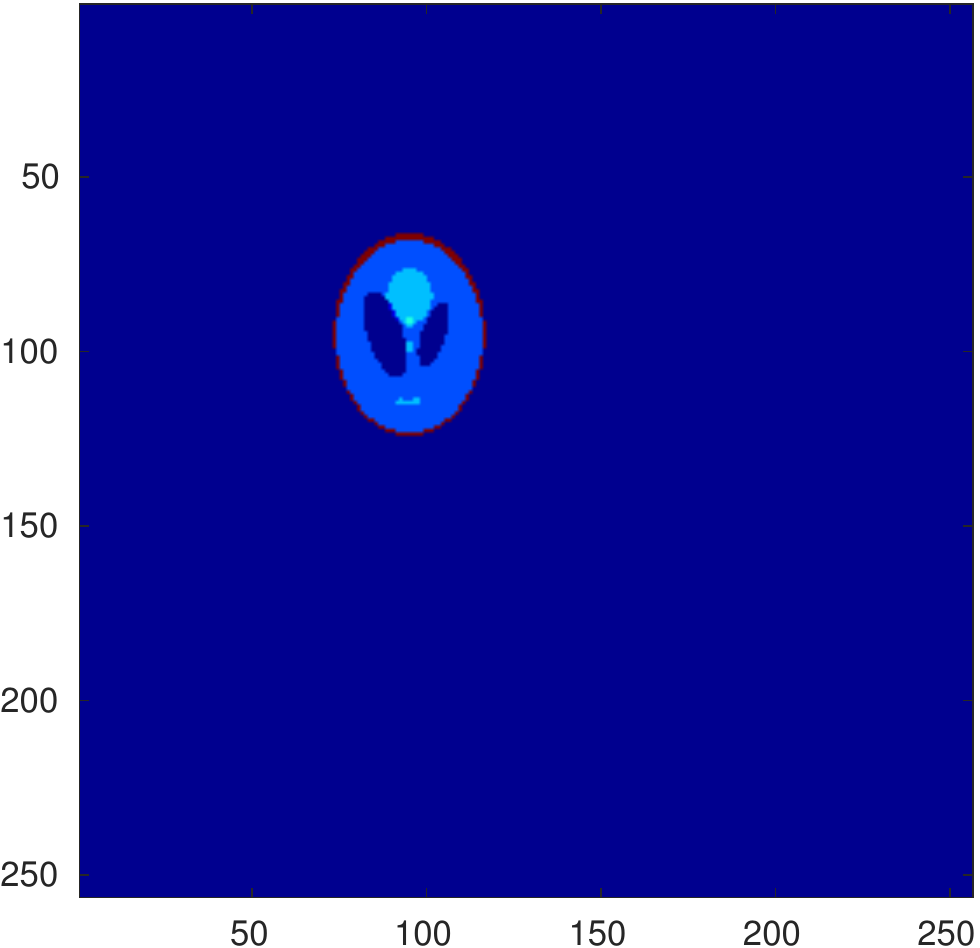}
	\includegraphics[width=0.37\linewidth]{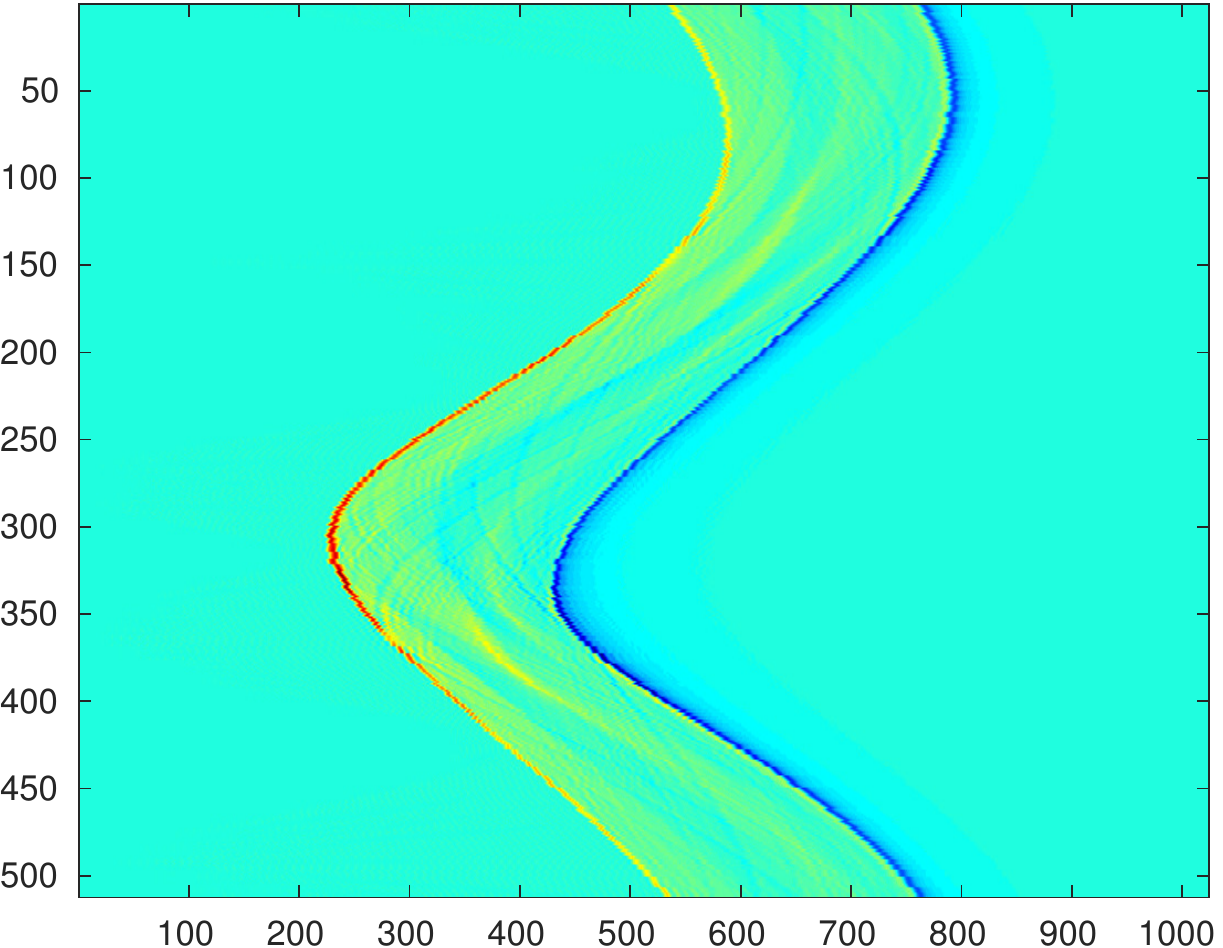} 
	\includegraphics[width=0.3\linewidth]{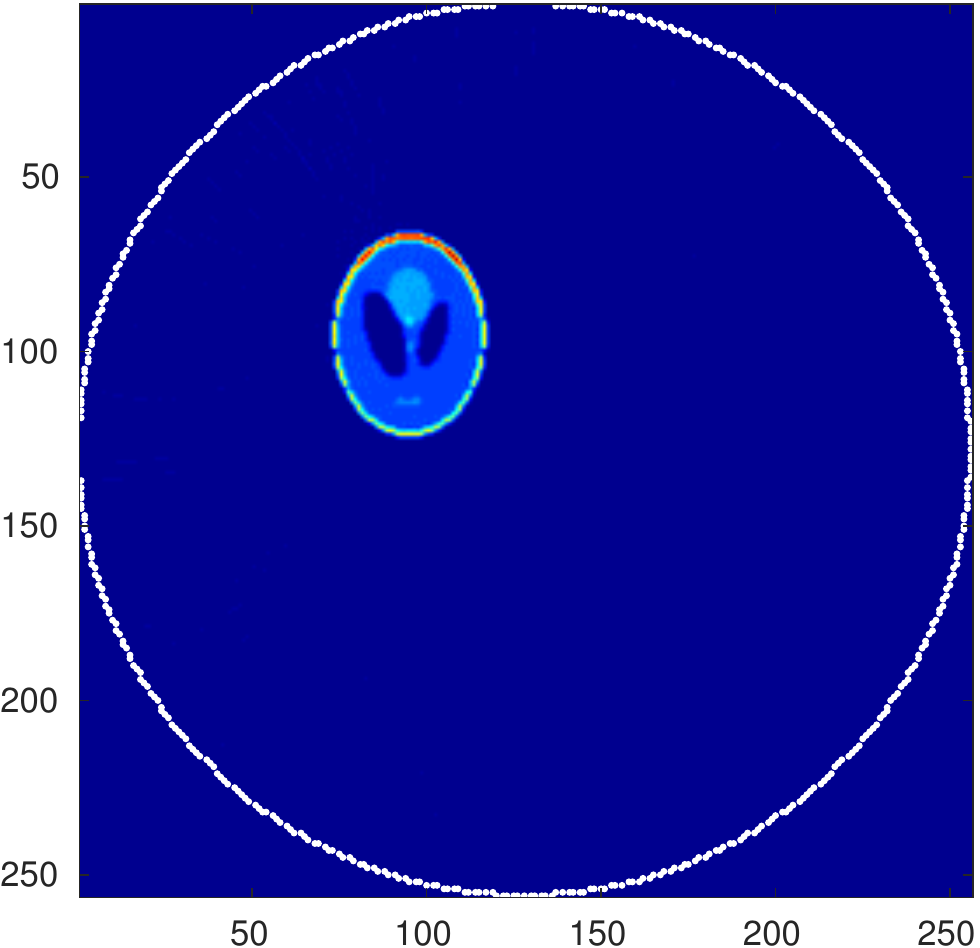} \\
	\includegraphics[width=0.3\linewidth]{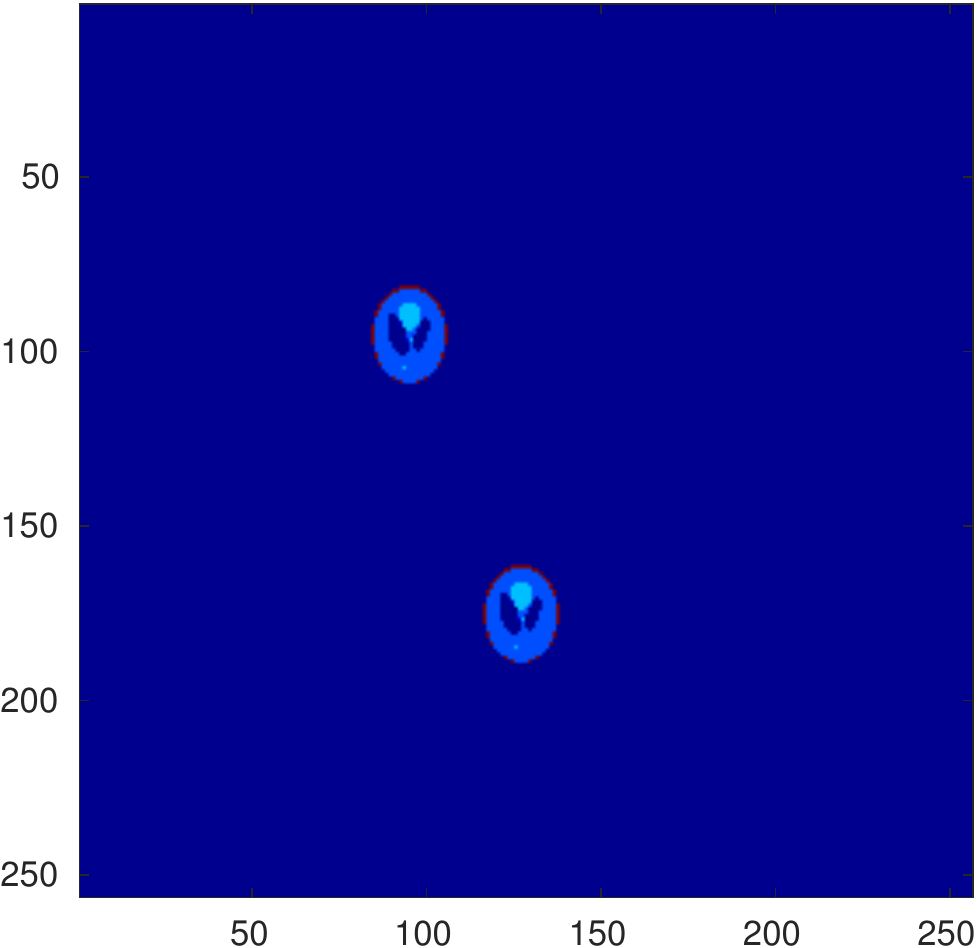}
	\includegraphics[width=0.38\linewidth]{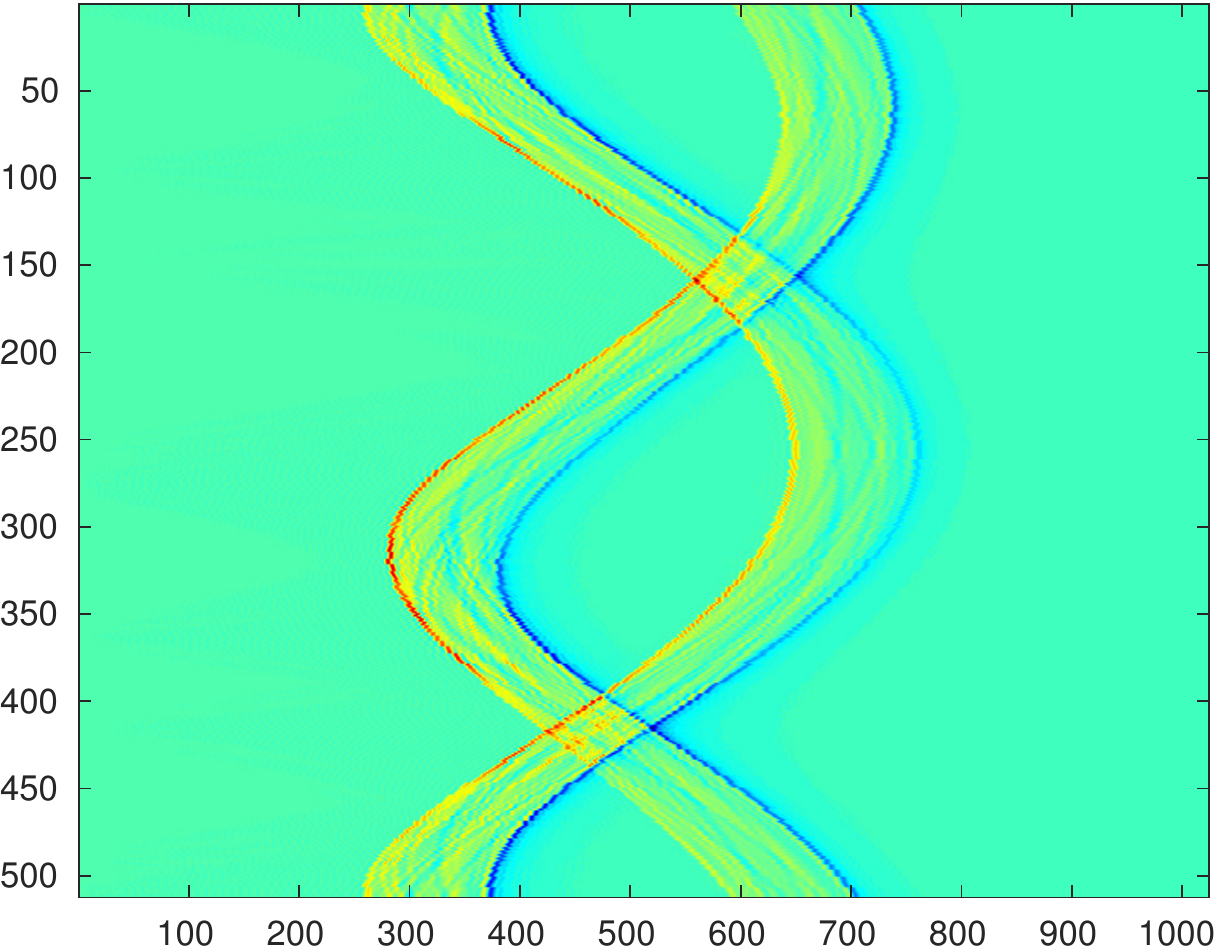} 
	\includegraphics[width=0.3\linewidth]{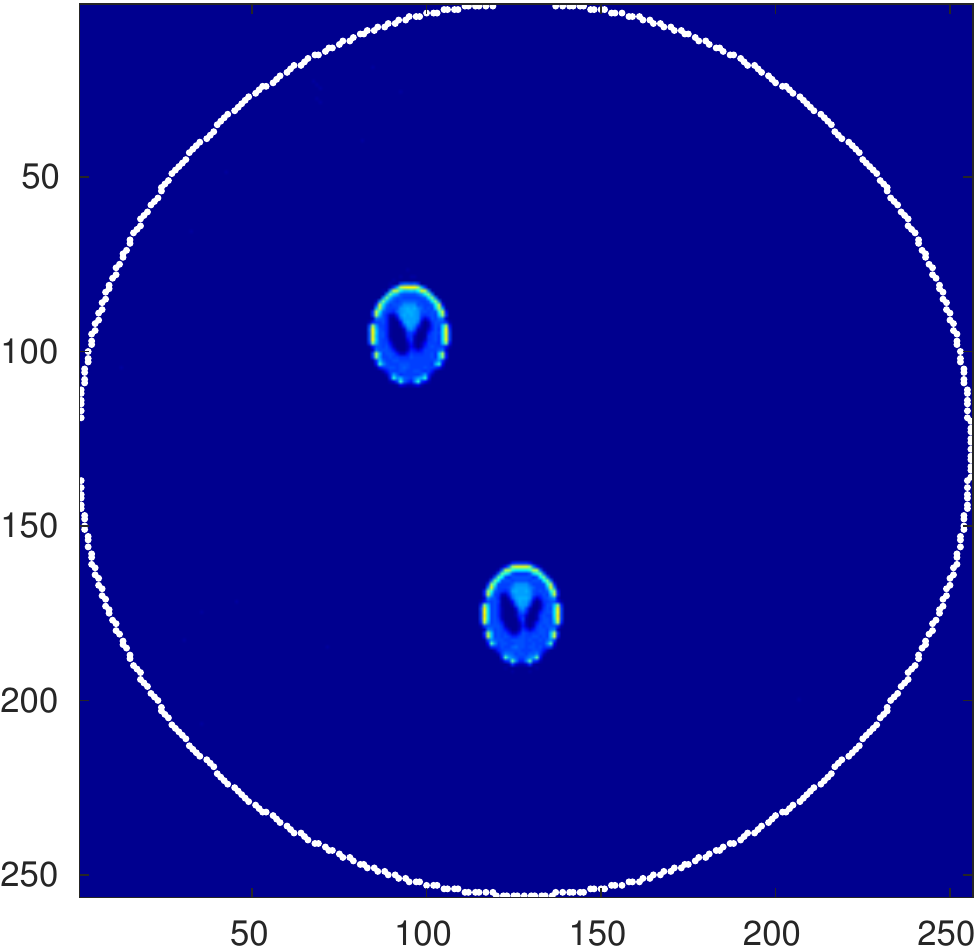} \\
\end{center}
\caption{Source reconstruction using time reversal imaging $\I$. Each line corresponds to a different choice of the source $p_0$. Left: initial source $p_0$;
middle: data $g = p_{obs}$; right: $\I[g]$. The white dots corresponds to sensors locations. 
 \label{fig:time_reversal_delta}}
%\label{fig:perspective1}}
\end{figure}  
On Figure \ref{fig:time_reversal_iterationTV1}, we give the result of the reconstruction procedure described in Section \ref{sec:criteriachoices}   using partial data $ g = p_{obs}$ with $L = 0.3$. The location of sensors are plotted with white marks on each picture. Each line corresponds to a different choice of the source $p_0$:  we plotted the source $p_0$, the result of the reconstruction by using the time reversal imaging $p_0^0$
and the reconstruction of the source $p_0^n$ after $n=30$ iterations.
In particular, we observed that some information on $p_0$ are lost and in particular the discontinuities
with normal directions that do not meet any sensor. \\

\begin{figure}[h!] 
 \begin{center}
 	\includegraphics[width=0.32\linewidth]{TestTR_Nsource_1_source.pdf}
 	\includegraphics[width=0.32\linewidth]{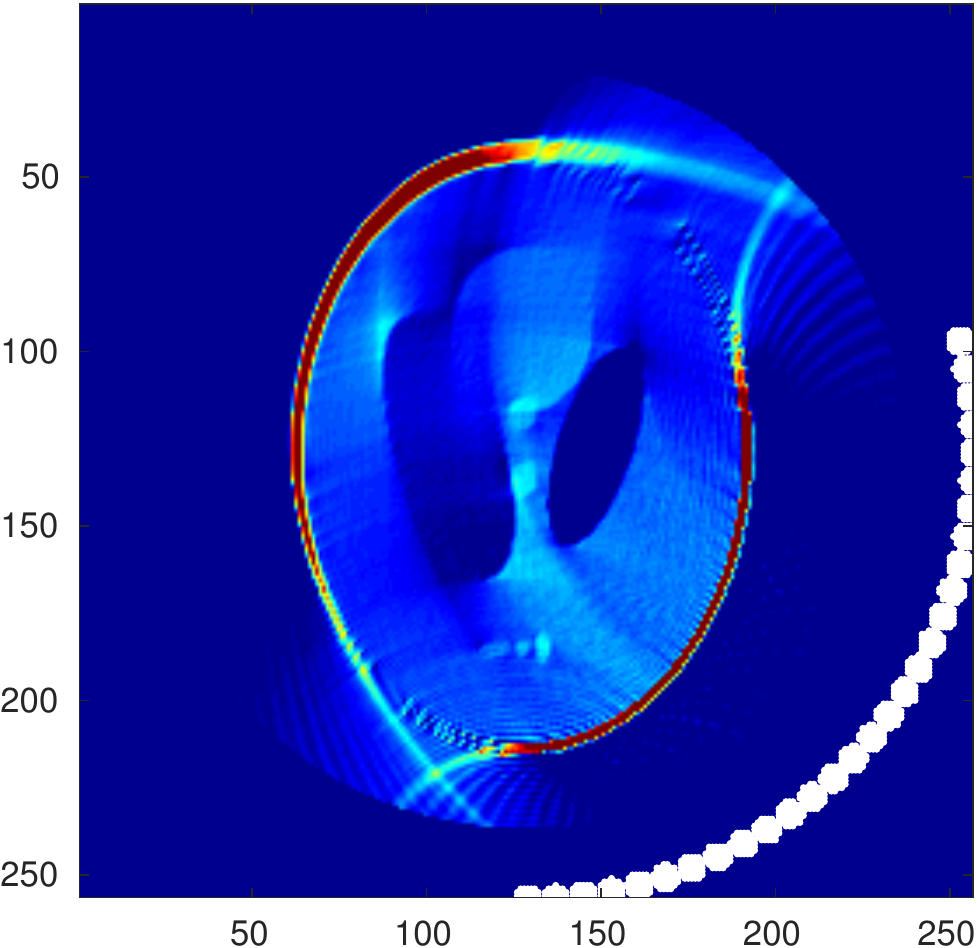} 
 	\includegraphics[width=0.32\linewidth]{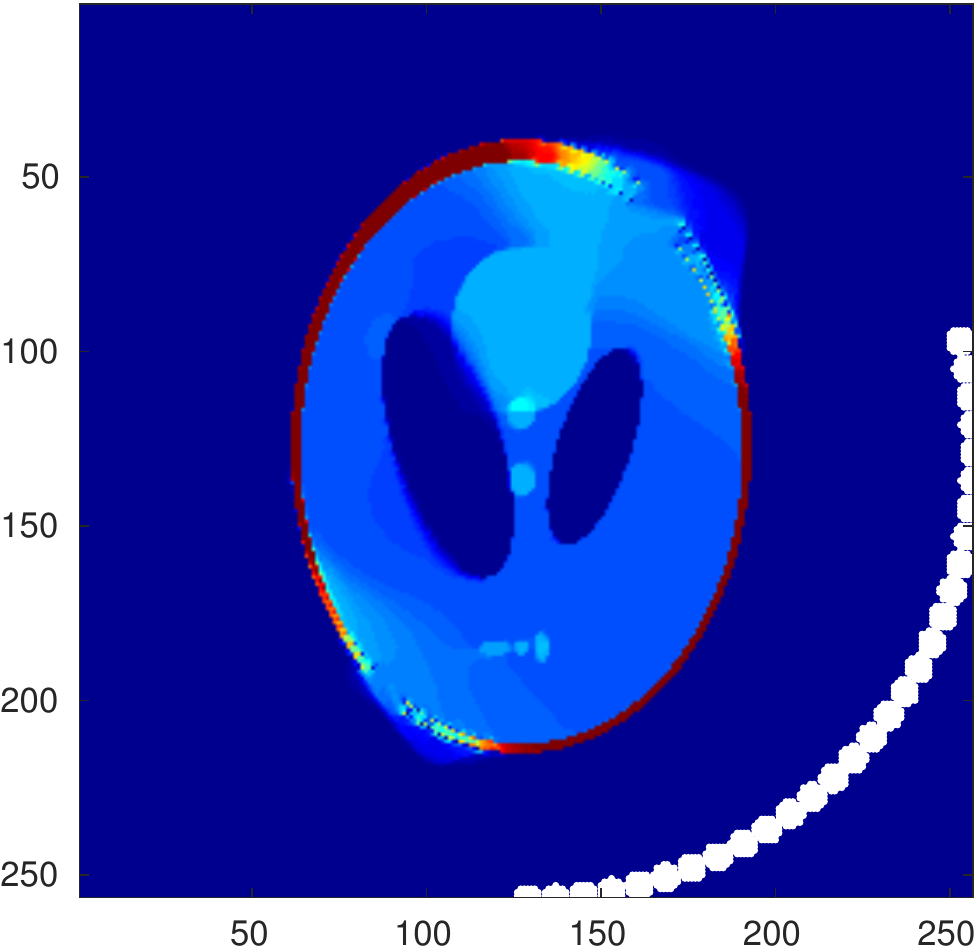} \\
 	\includegraphics[width=0.32\linewidth]{TestTR_Nsource_2_source.pdf}
 	\includegraphics[width=0.32\linewidth]{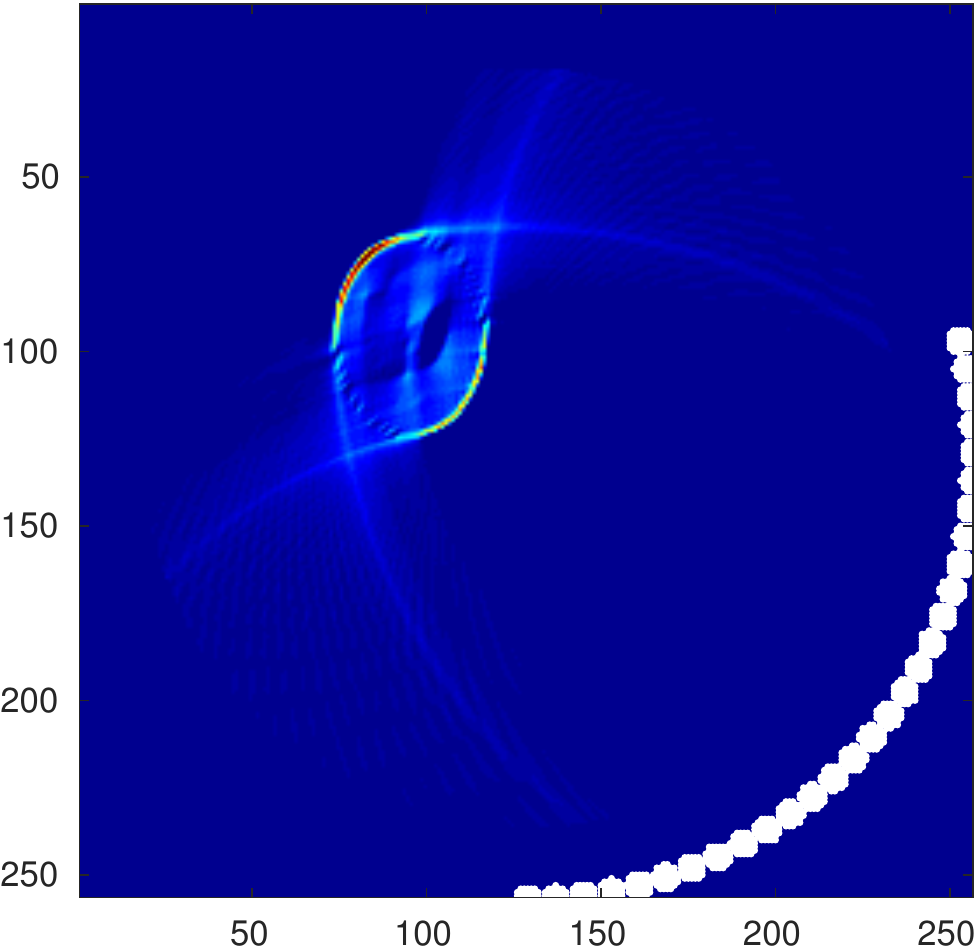} 
 	\includegraphics[width=0.32\linewidth]{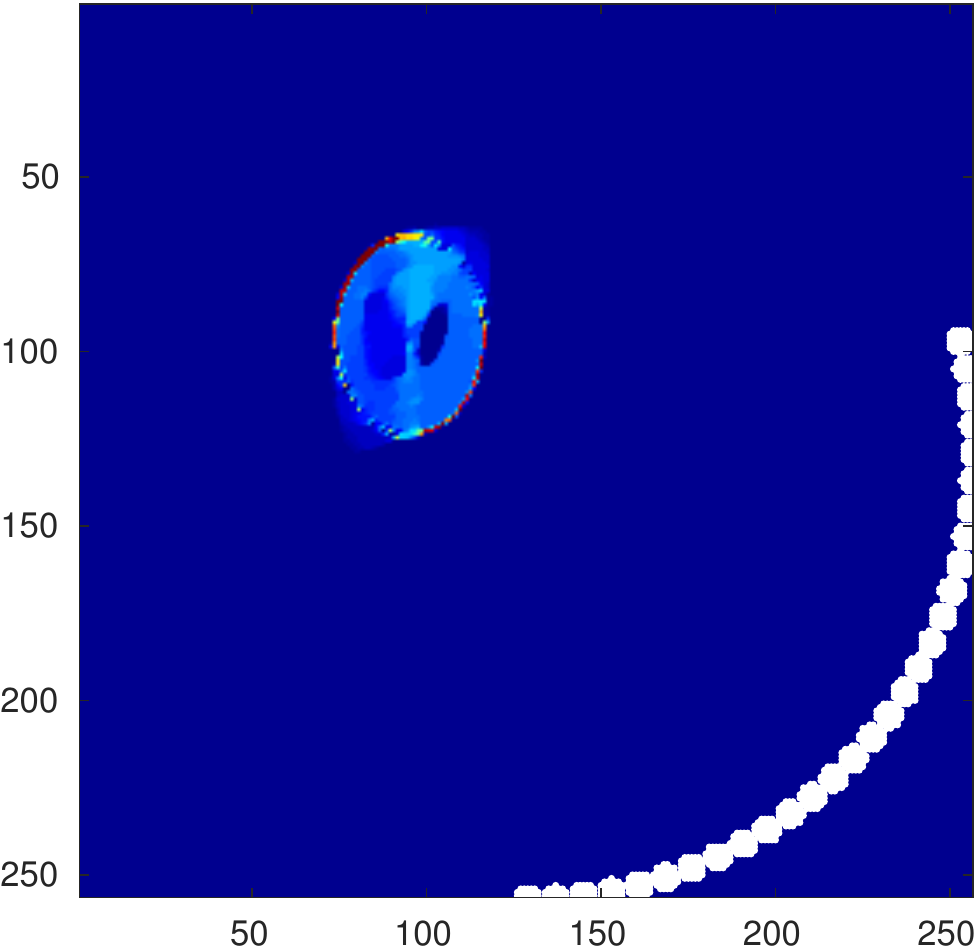} \\
 	\includegraphics[width=0.32\linewidth]{TestTR_Nsource_3_source.pdf}
 	\includegraphics[width=0.32\linewidth]{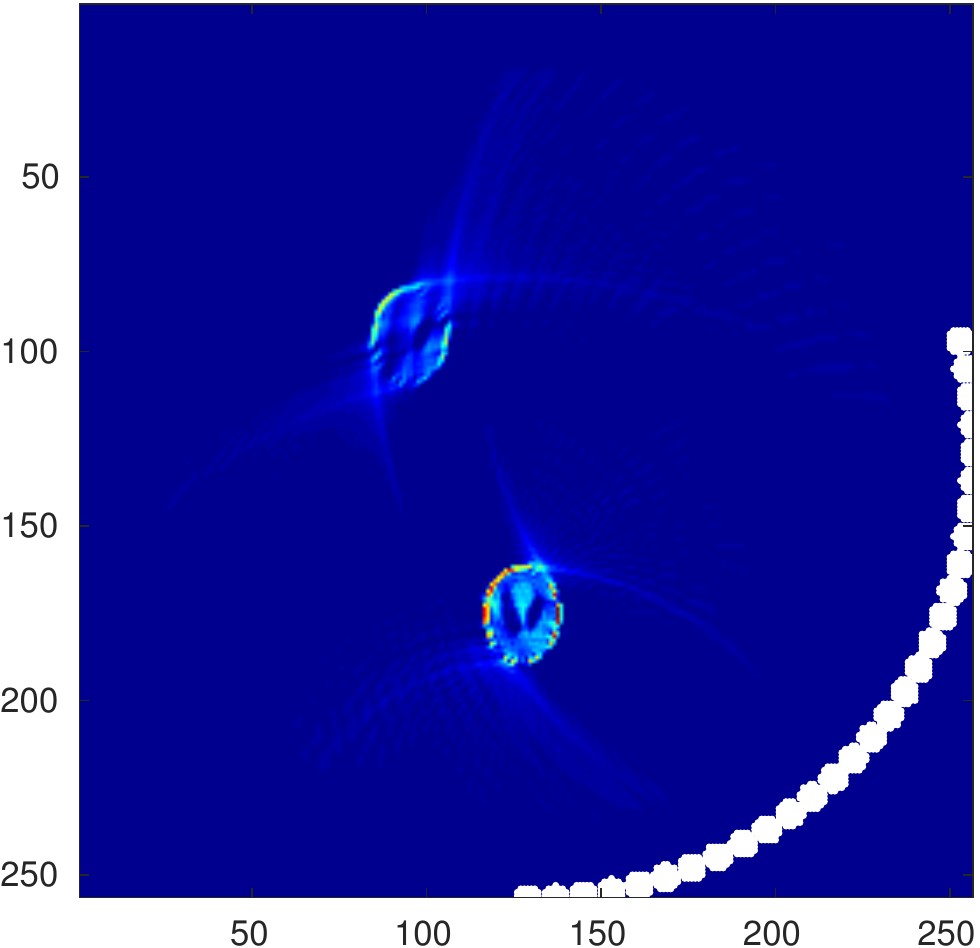} 
 	\includegraphics[width=0.32\linewidth]{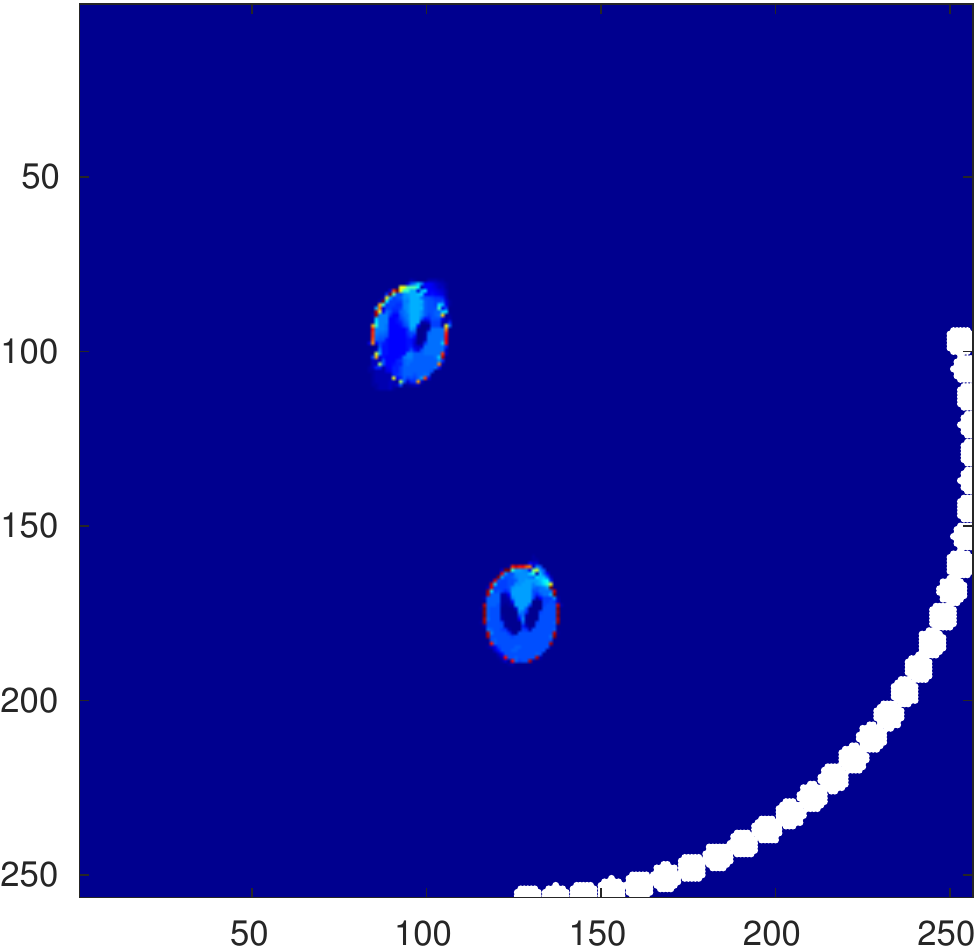} \\
\end{center}
 \caption{Minimization of $J^0$ w.r.t. $(\1e_{\Sigma},p_0)$.  Each line corresponds to a different choice of the source $p_0$. Left: initial source $p_0$; middle: reconstruction using time reversal imaging- $\I[g]$; right: reconstructed source $p^{n}_{0}$ after $n= 30$ iterations. 
 The white dots correspond to sensors locations. \label{fig:time_reversal_iterationTV1}}
% \label{fig:perspective2}}
 \end{figure} 
Next, on Figure \ref{fig:time_reversal_iterationTV}, we present the reconstruction of the source $p_0$ if one allows the sensors location to evolve. As previously, each line corresponds to a different choice of the source $p_0$.
Moreover, on each line, we respectively plotted
\begin{itemize}
 \item the reconstructed source $p^{n}_{0}$ after $n= 30$ iterations, 
 \item the energy function $\psi_{u[p^{n}_{0}]}$ computed on $ \Omega$ and the associated optimal location of sensors plotted with red marks. 
 \item the reconstructed source $p^{n}_{0}$ after $n= 15$ iterations after using the new location of sensors. 
\end{itemize}

\begin{figure}[H] 
 \begin{center}
 	\includegraphics[width=0.32\linewidth] {Test2_Nsource1_k_100.pdf}
 	\includegraphics[width=0.32\linewidth]{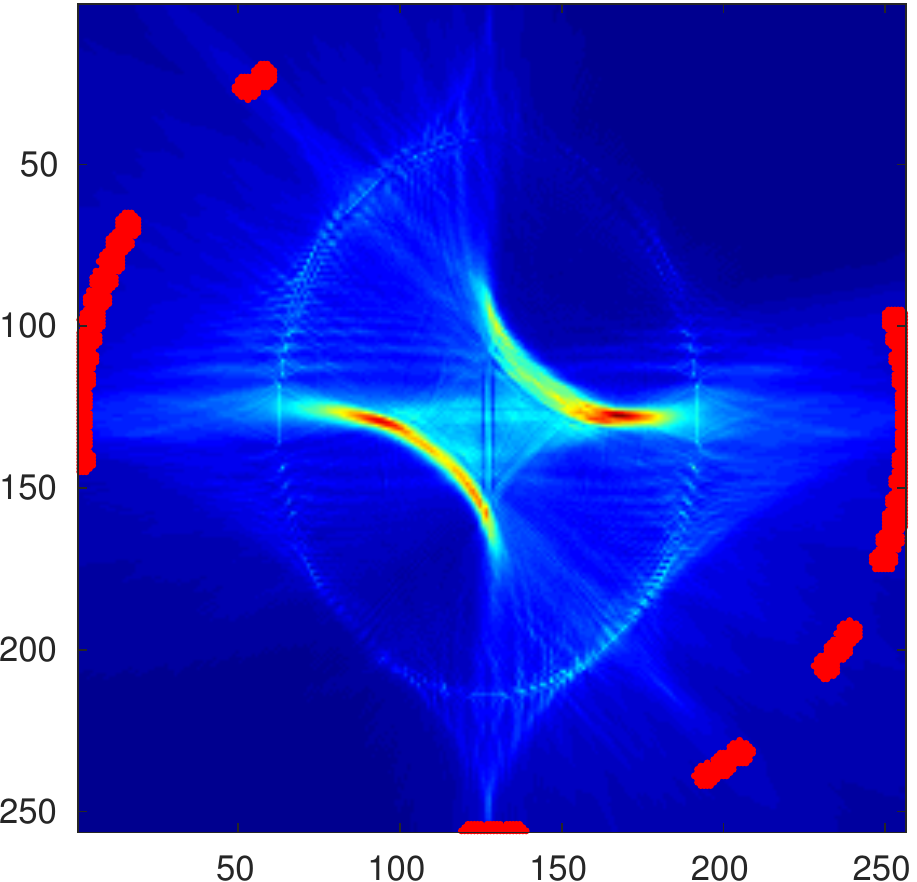} 
 	\includegraphics[width=0.32\linewidth]{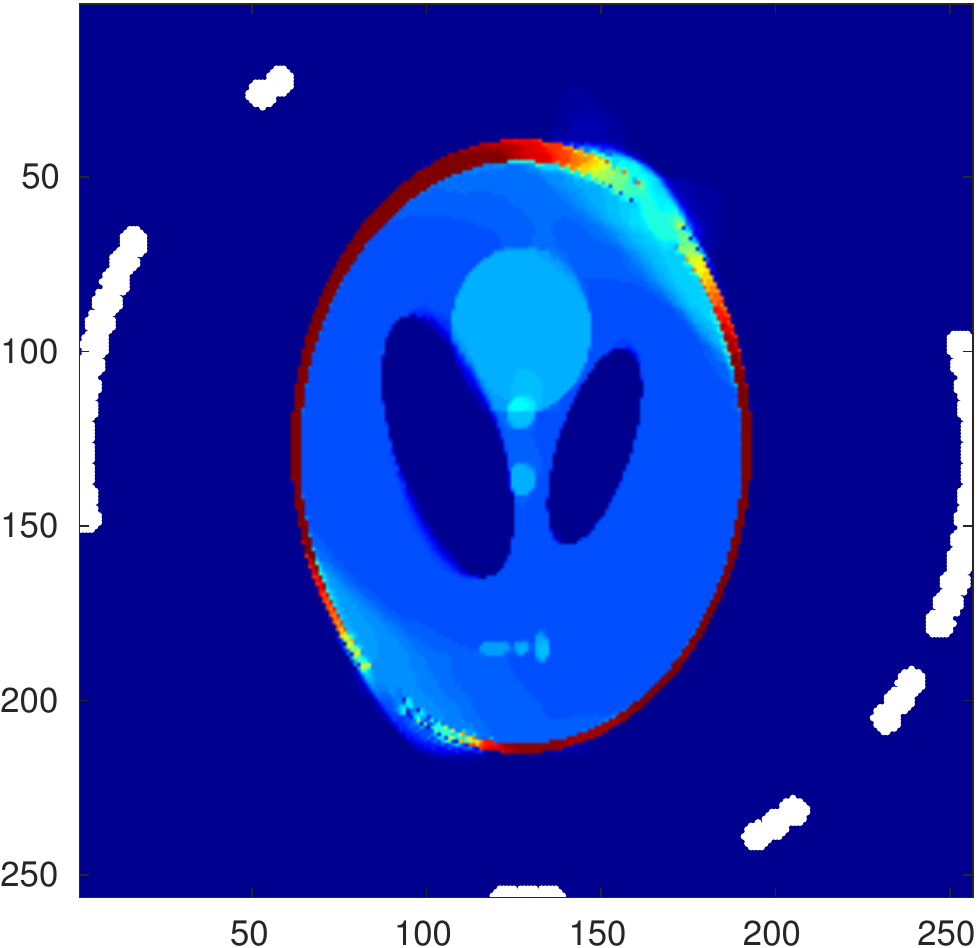} \\
 \includegraphics[width=0.32\linewidth]{Test_Nsource2_k_30.pdf}
 	\includegraphics[width=0.32\linewidth]{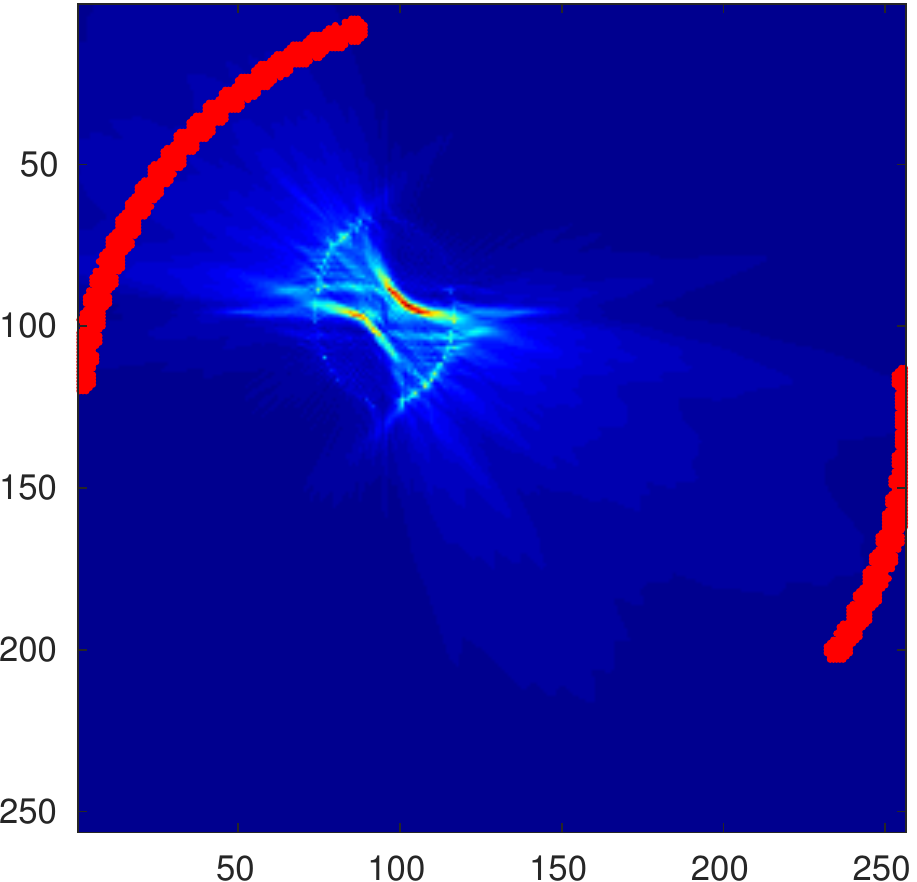} 
 	\includegraphics[width=0.32\linewidth]{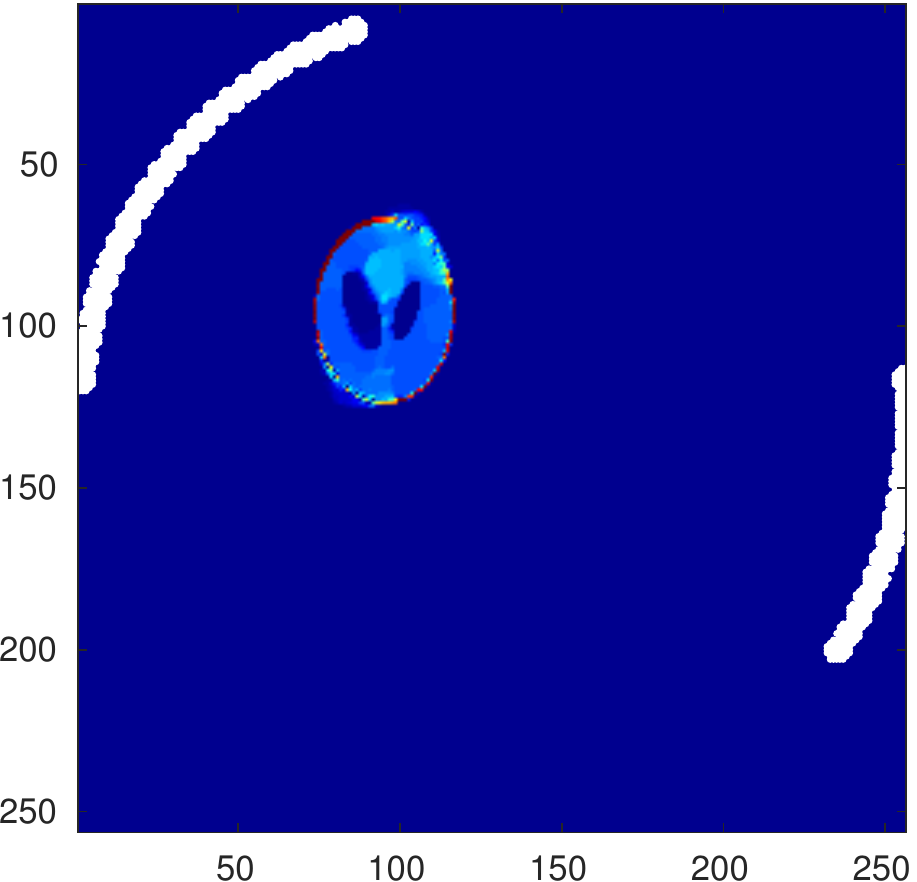} \\
 \includegraphics[width=0.32\linewidth]{Test_Nsource3_k_30.pdf}
 	\includegraphics[width=0.32\linewidth]{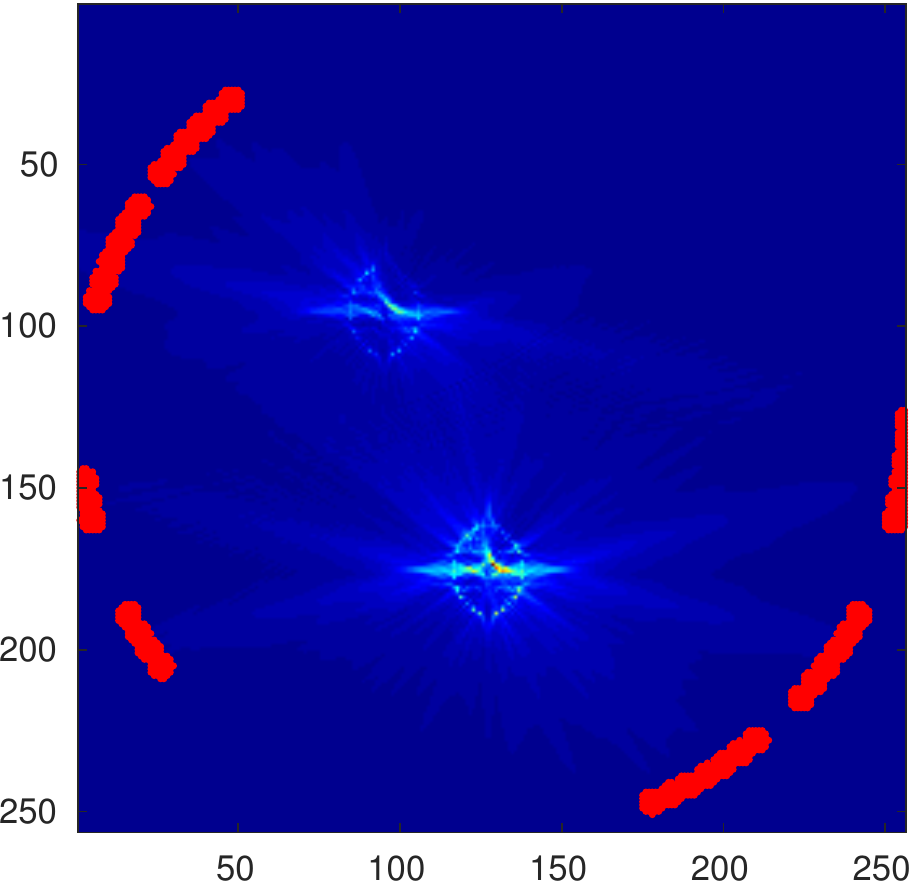} 
 	\includegraphics[width=0.32\linewidth]{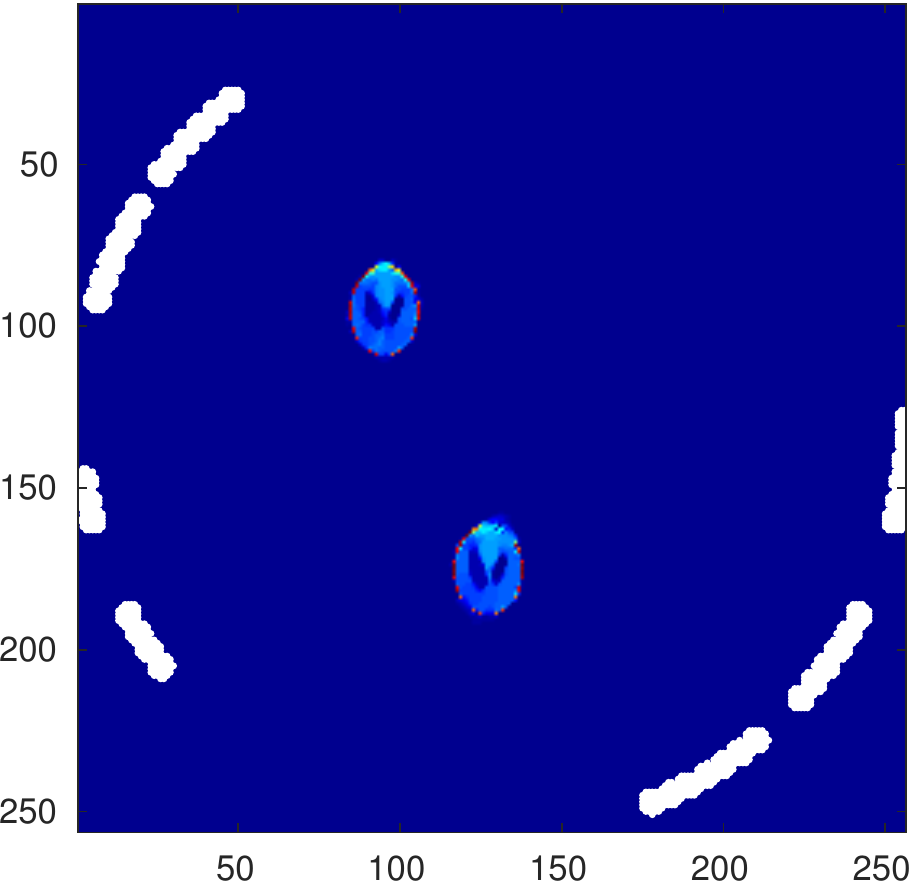} \\
\end{center}
 \caption{Optimization of sensors location. Each line corresponds to a different choice of the source $p_0$; 
 Left: reconstructed source $p^{n}_{0}$ after $n= 30$ iterations by using initial locations of the sensors;
 middle: function $ \psi_{p_0^n}$ defined by \eqref{n2202} on $ \Omega$ and new location of sensors ({\it red} dots); 
 right: reconstructed source $p^{n}_{0}$ after $n= 20$ iterations by using the new location of sensors.
 \label{fig:time_reversal_iterationTV}
 \label{fig:perspective2}}
 \end{figure} 
Finally, as expected, the reconstruction of the source is much better by using the new location of sensors even if, the reconstruction remains unperfect.

Let us illustrate the interest of using the term $A_2$ as a good reconstruction quality factor. On Figures \ref{fig:time_reversal_iterationTV2} and \ref{fig:time_reversal_iterationTV3}, we first compute the optimal location of sensors, respectively in the continuous and discrete\footnote{Meaning that we consider a given number of sensors.} settings, using the true value of the source term $p_0$. Second, using the new sensors location, we provide an estimate of the source $p_0$ by solving Problem \eqref{MainPbOpt}. 
In each case, we observe that the source reconstruction is almost perfect.

\begin{figure}[h!] 
 \begin{center}
 	\includegraphics[width=0.32\linewidth]{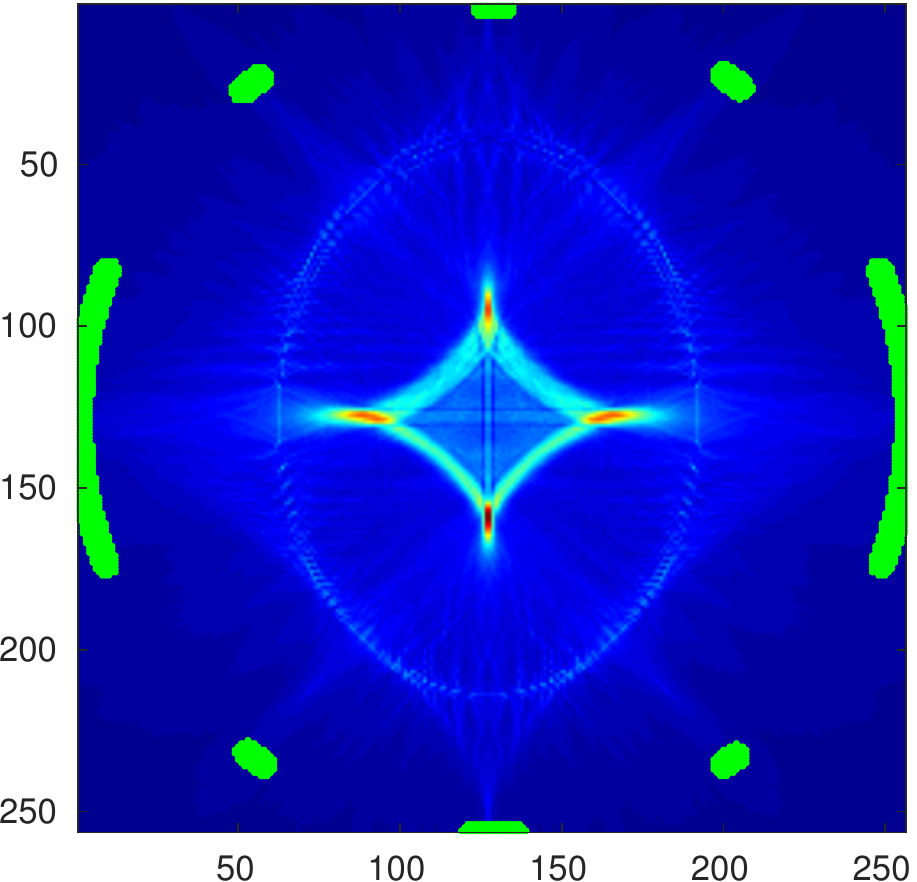}
 	\includegraphics[width=0.32\linewidth]{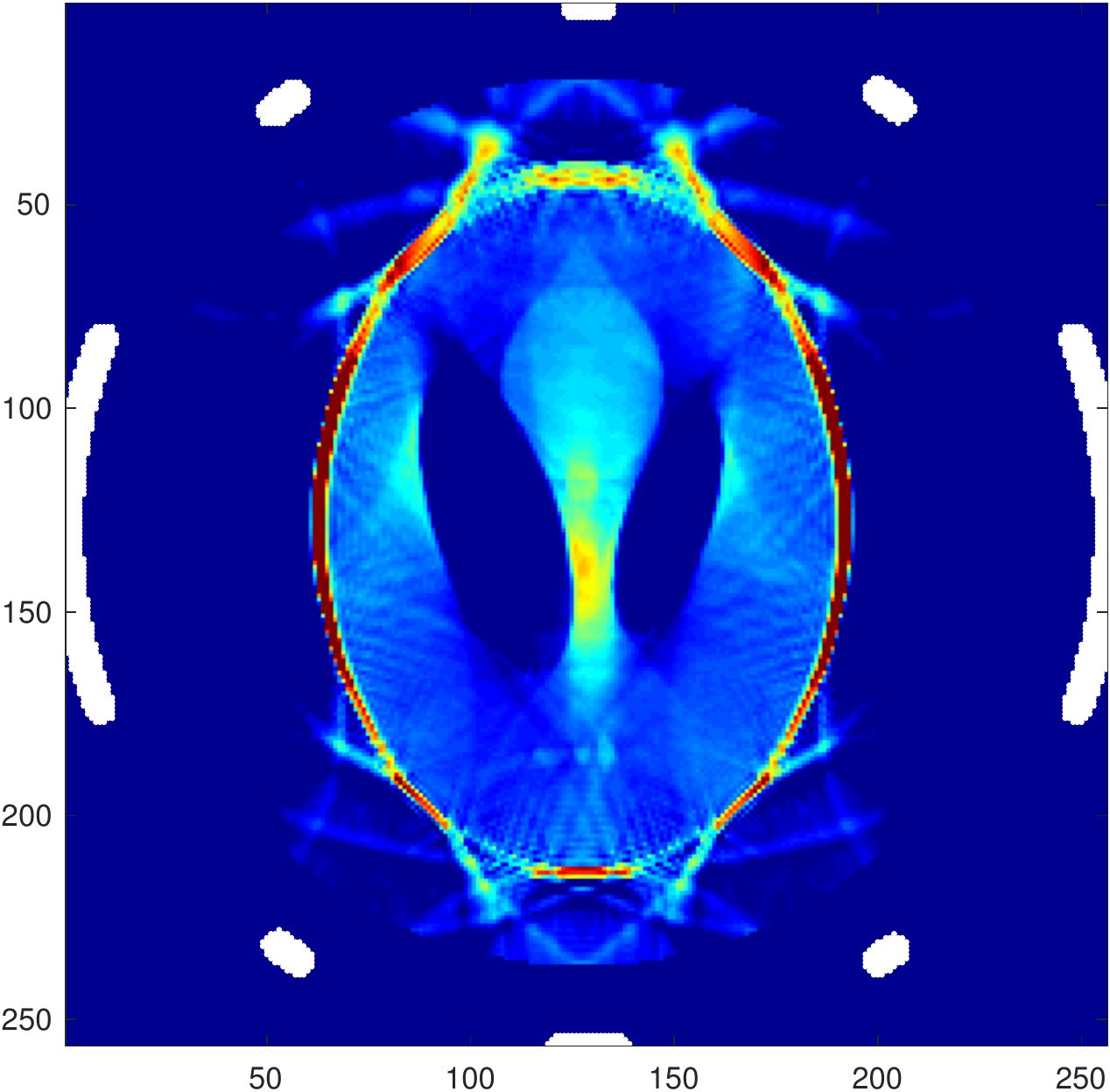} 
 	\includegraphics[width=0.32\linewidth]{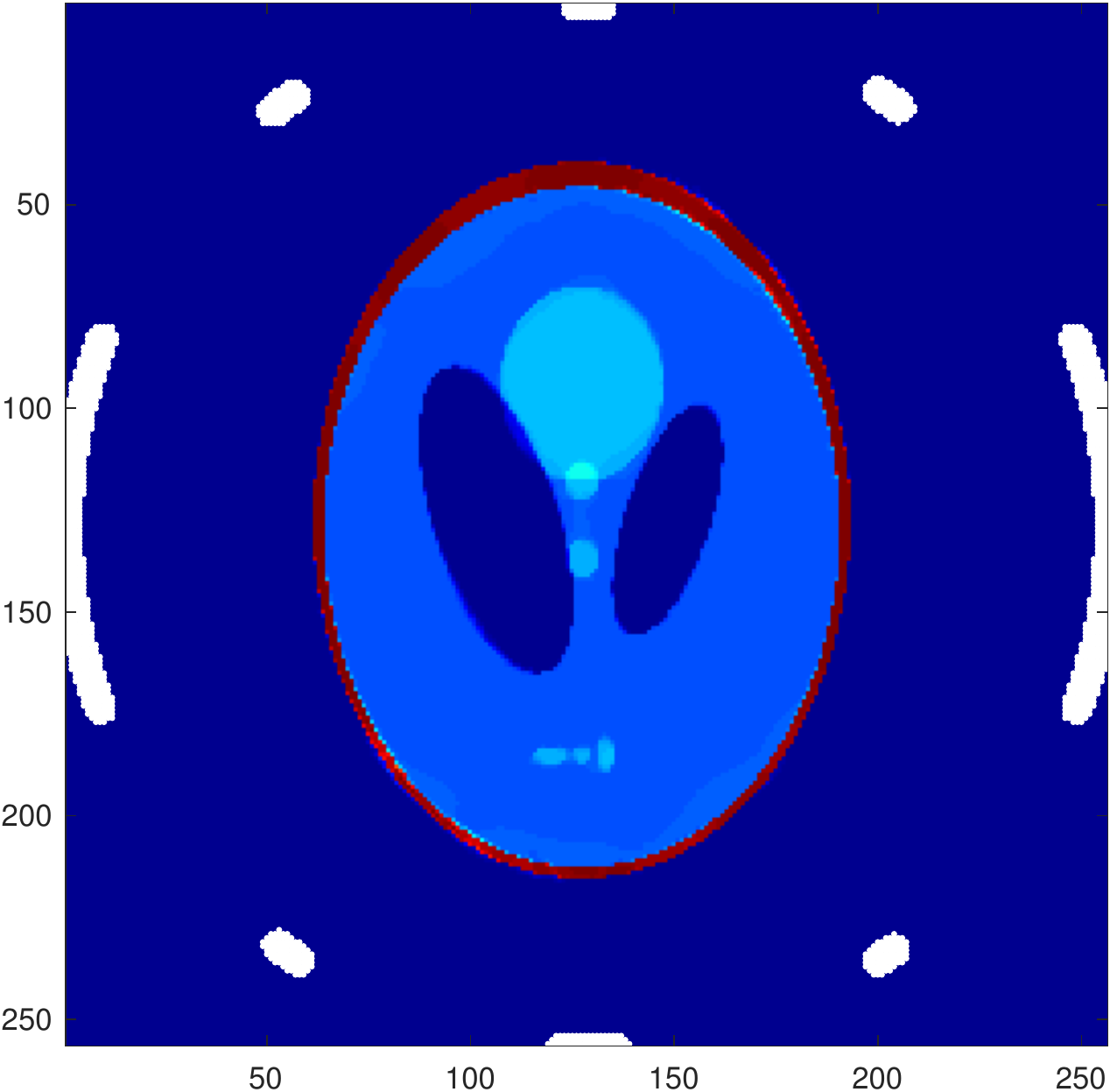} \\
 \includegraphics[width=0.32\linewidth]{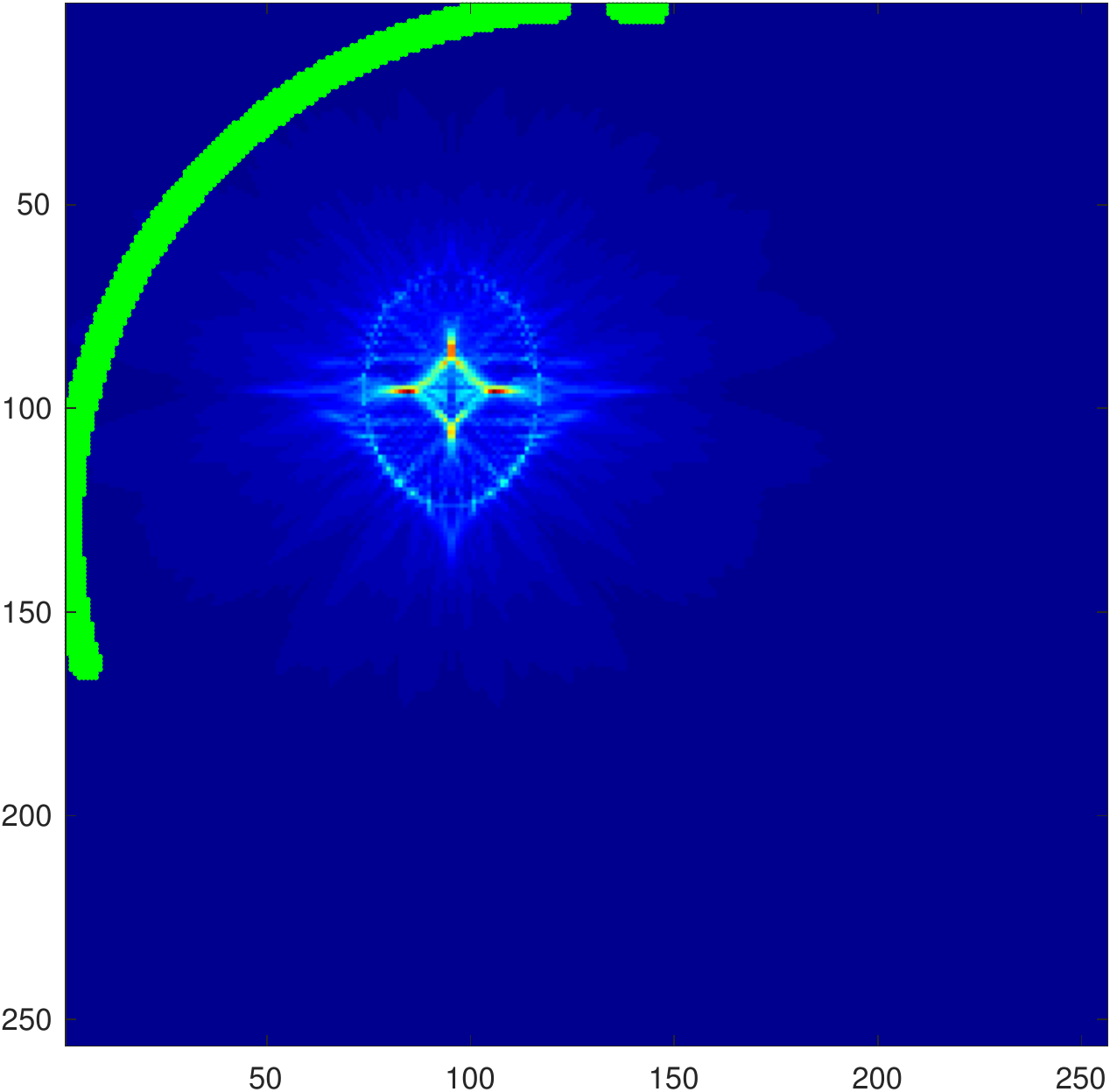}
 	\includegraphics[width=0.32\linewidth]{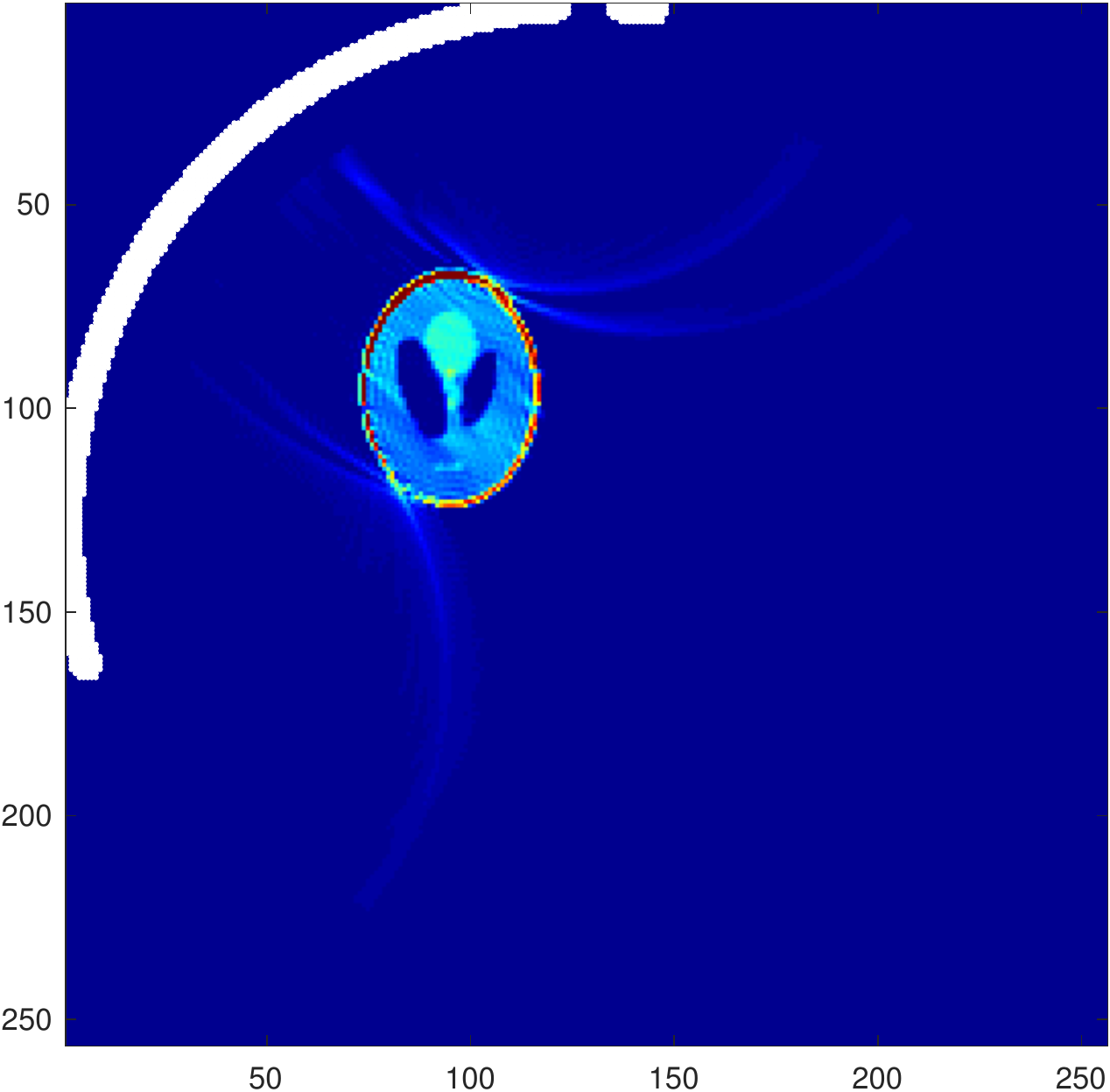} 
 	\includegraphics[width=0.32\linewidth]{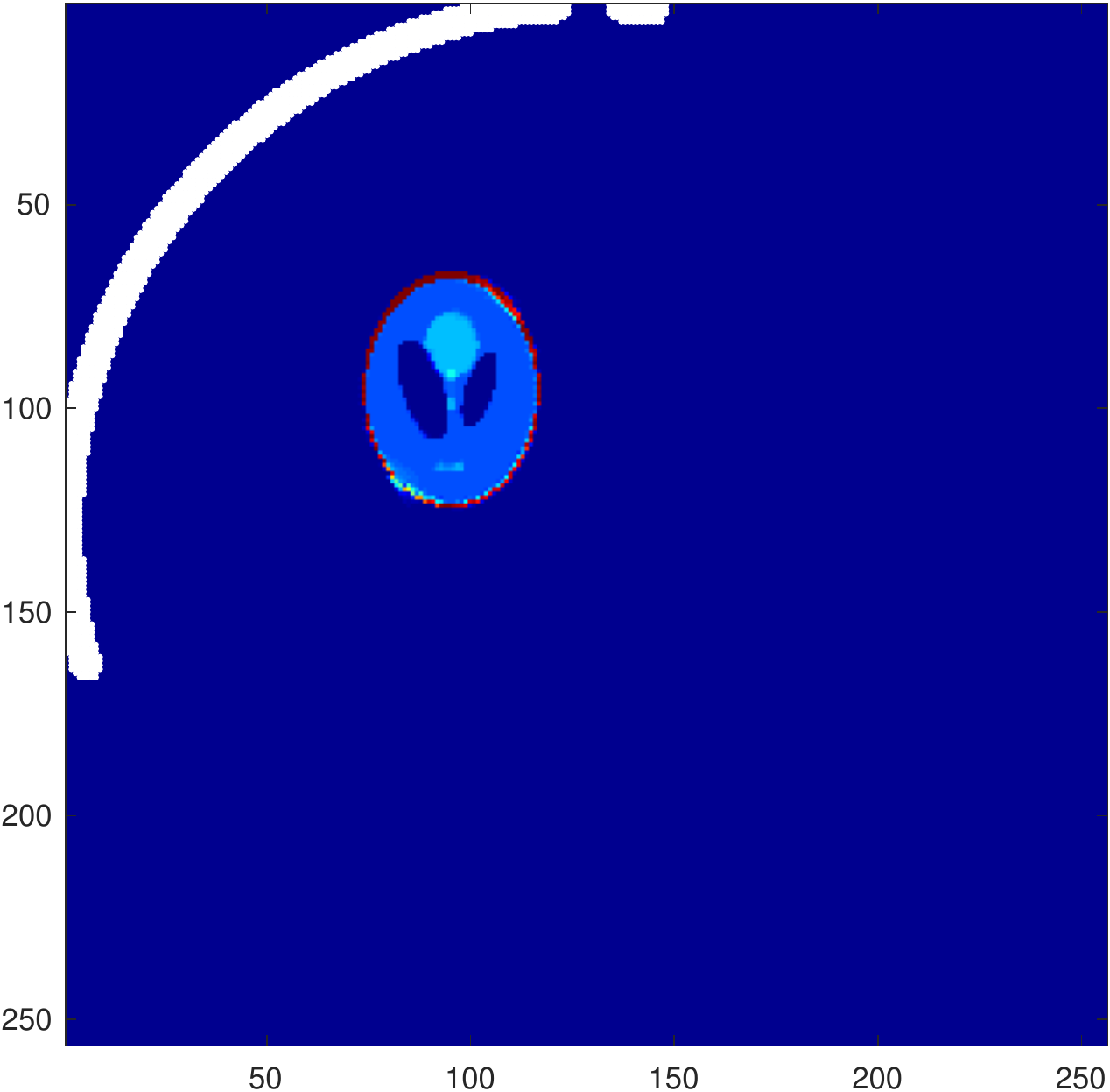} \\
 	\includegraphics[width=0.32\linewidth]{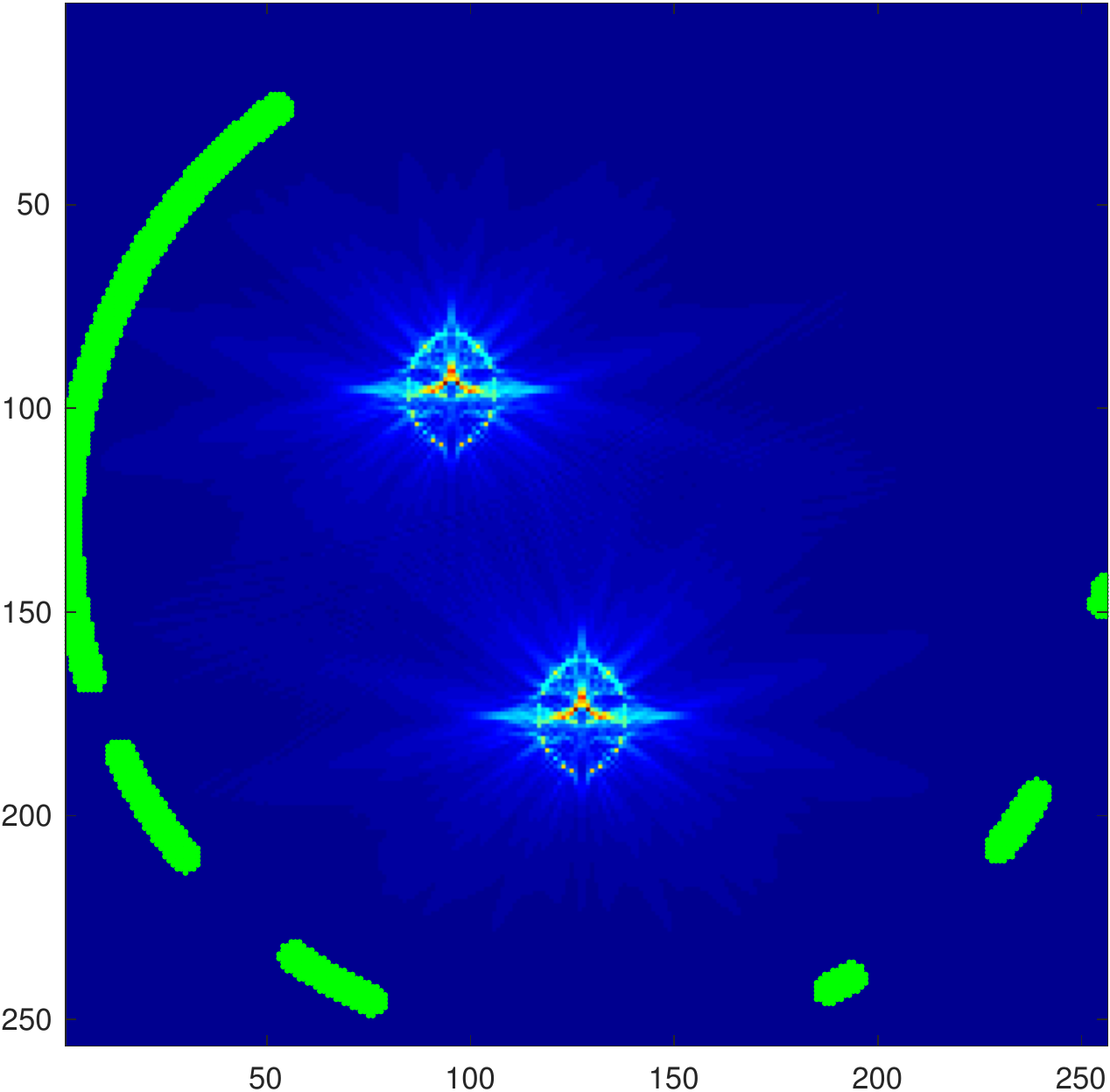}
 	\includegraphics[width=0.32\linewidth]{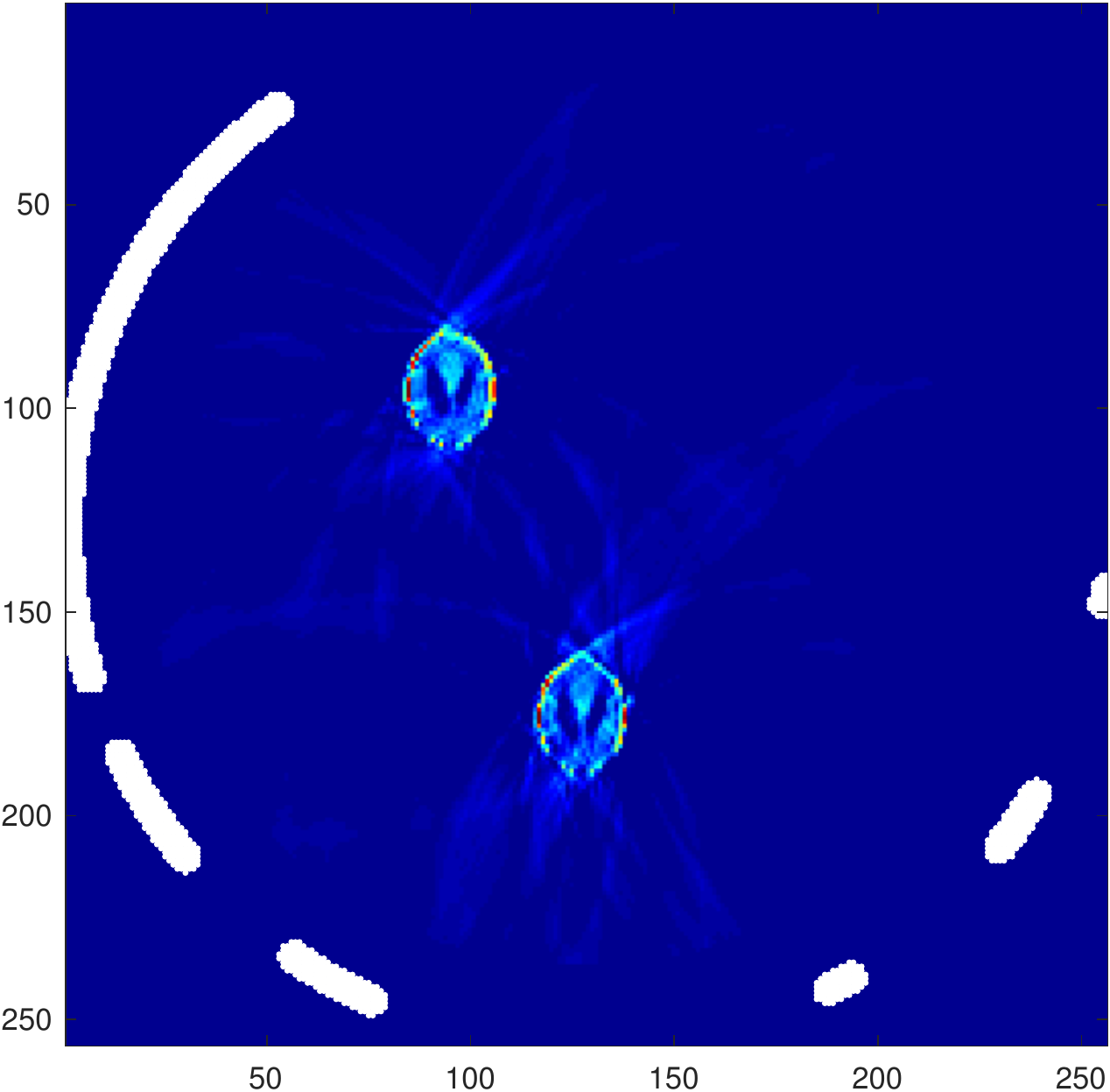} 
 	\includegraphics[width=0.32\linewidth]{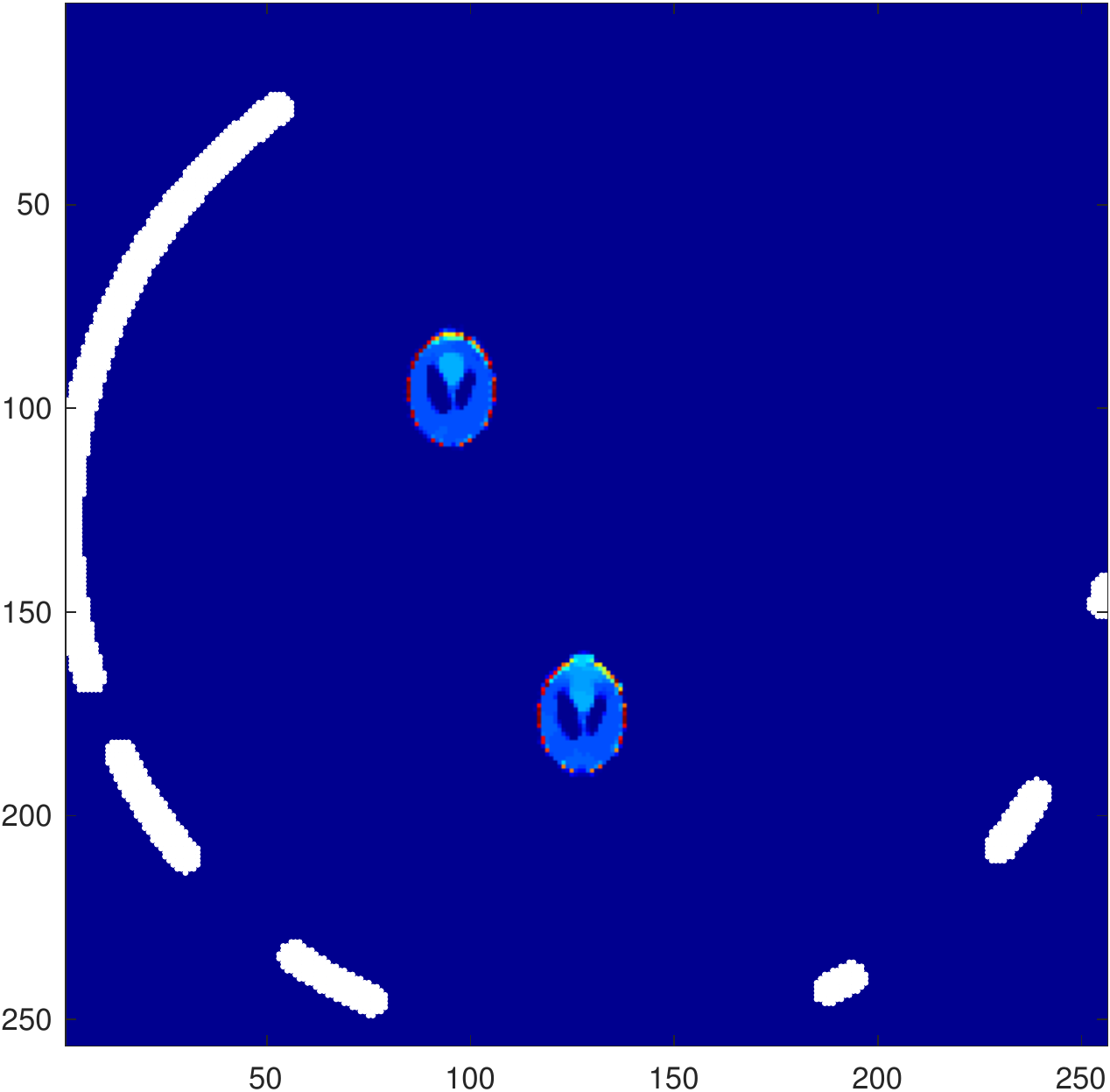} \\
\end{center}
 \caption{Optimization of sensors location. Each line corresponds to a different choice of the source $p_0$. 
 Left: function $ \psi_{[p_0]}(x)$ defined by \eqref{n2202} on $\Omega$ and the best location of sensors (green dots);
 middle: reconstruction by using the time reversal imaging $\I[g]$; 
 right: reconstructed source $p^{n}_{0}$ after $n= 30$ iterations by using the resulting optimal location of sensors.
 \label{fig:time_reversal_iterationTV2}
}
 \end{figure}

 \begin{figure}[h!] 
 \begin{center}
 	\includegraphics[width=0.32\linewidth]{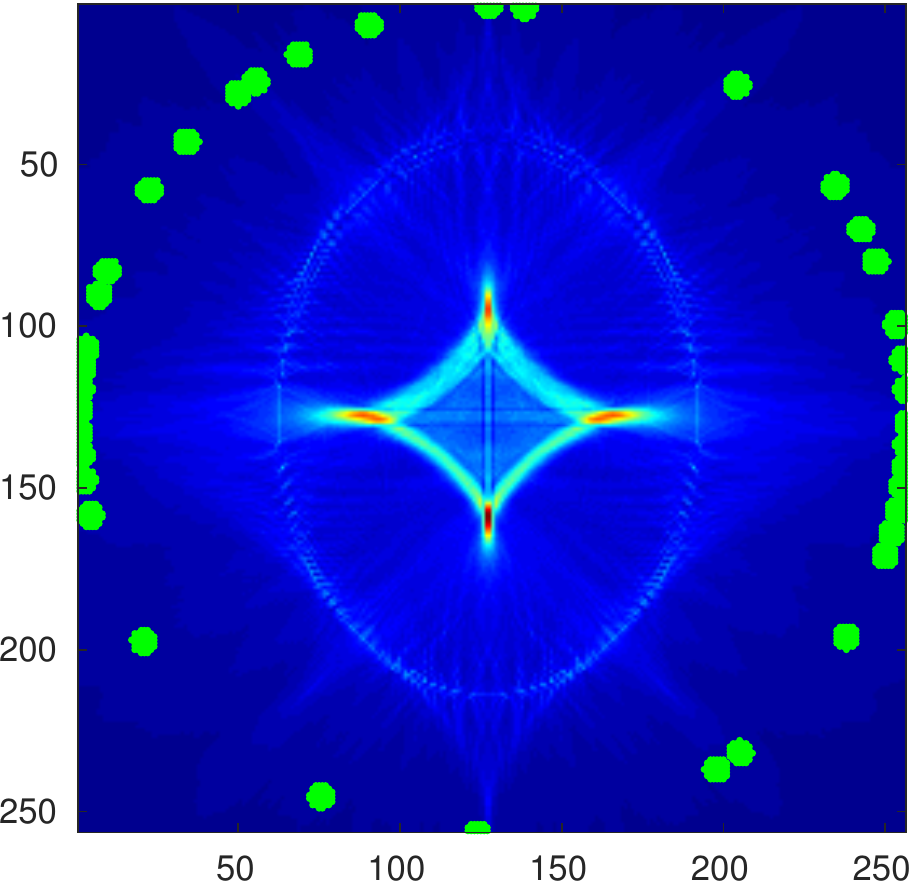}
 	\includegraphics[width=0.32\linewidth]{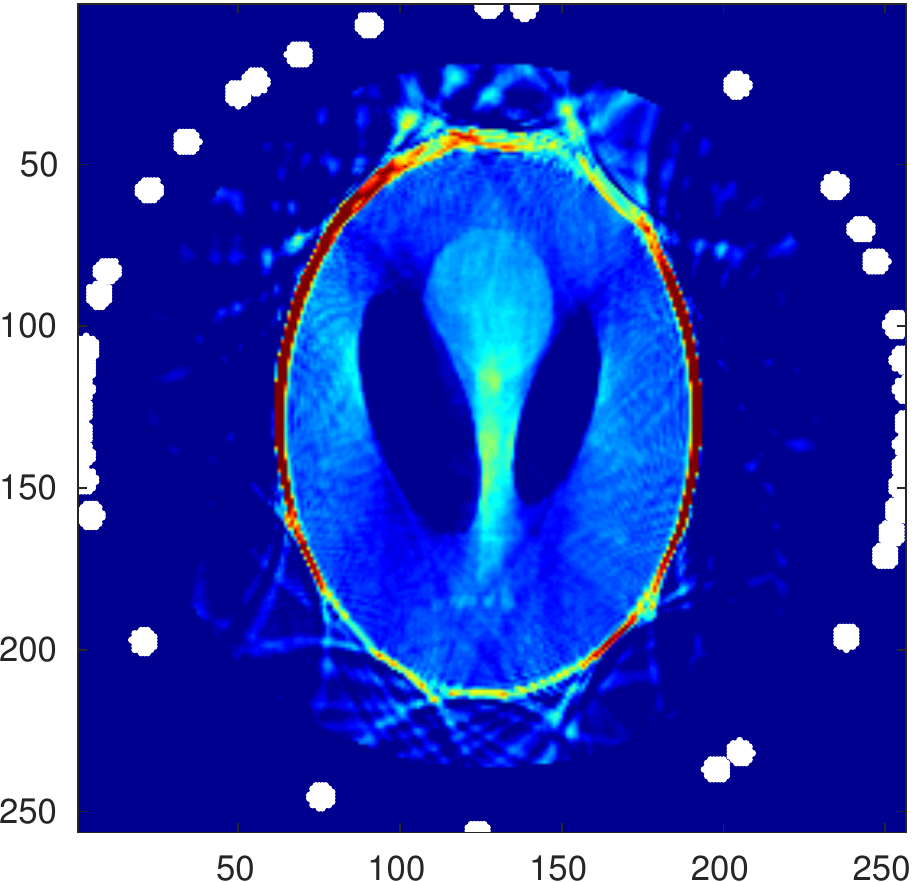} 
 	\includegraphics[width=0.32\linewidth]{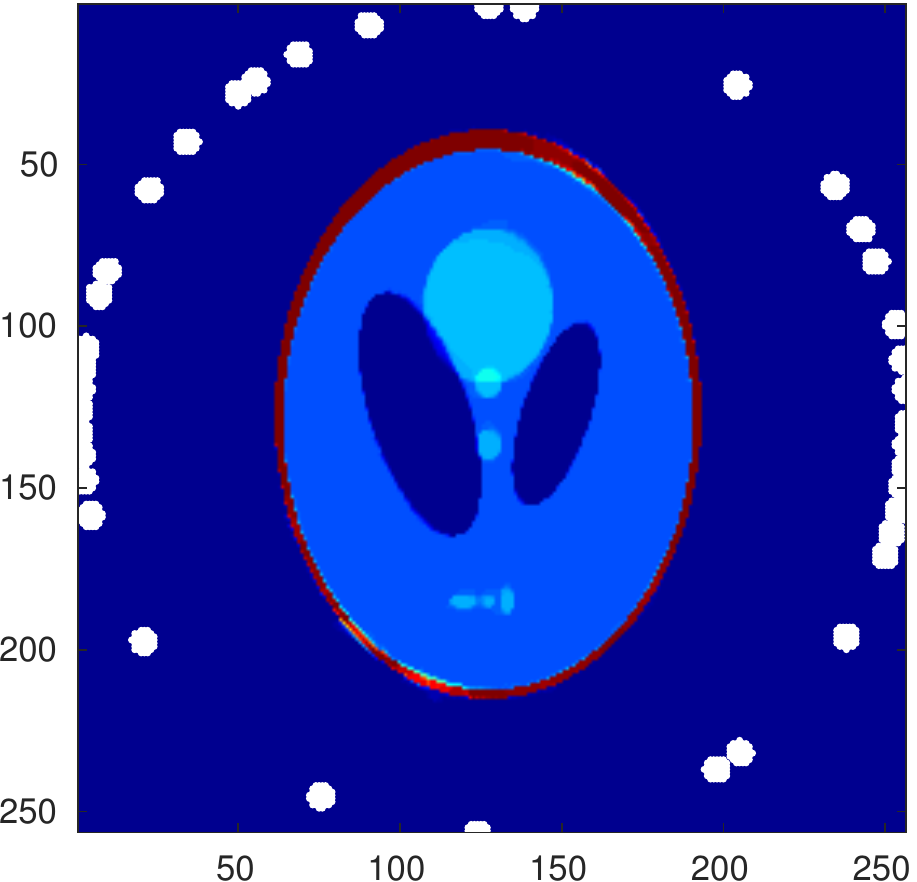} \\
 \includegraphics[width=0.32\linewidth]{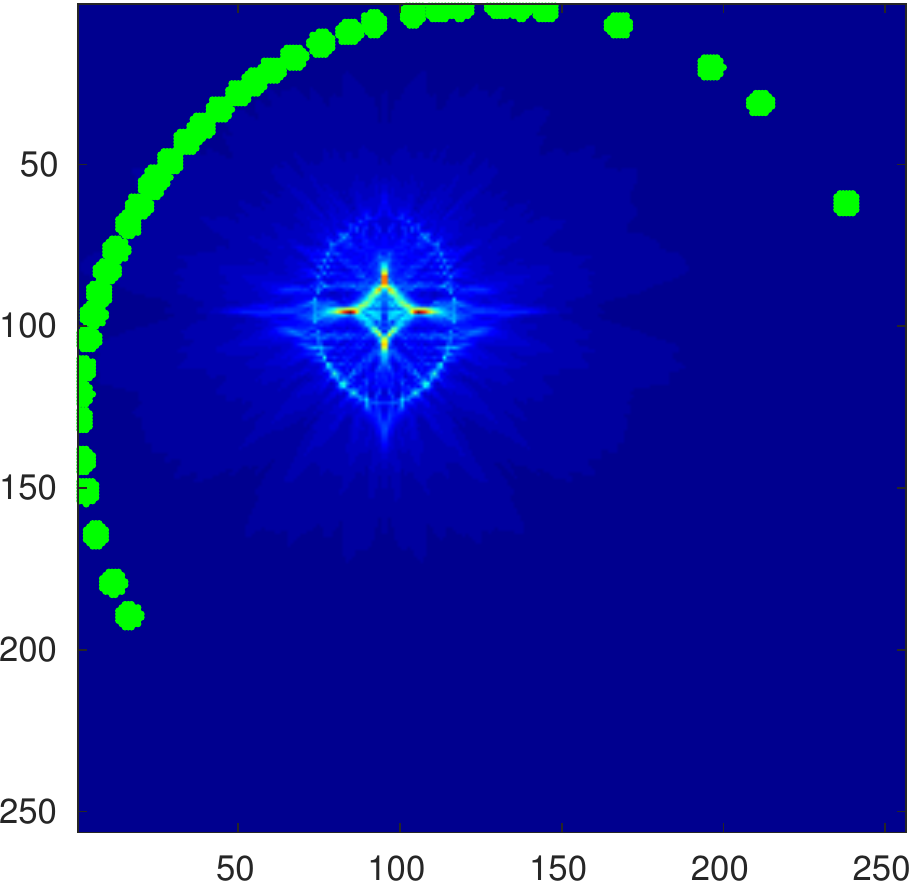}
 	\includegraphics[width=0.32\linewidth]{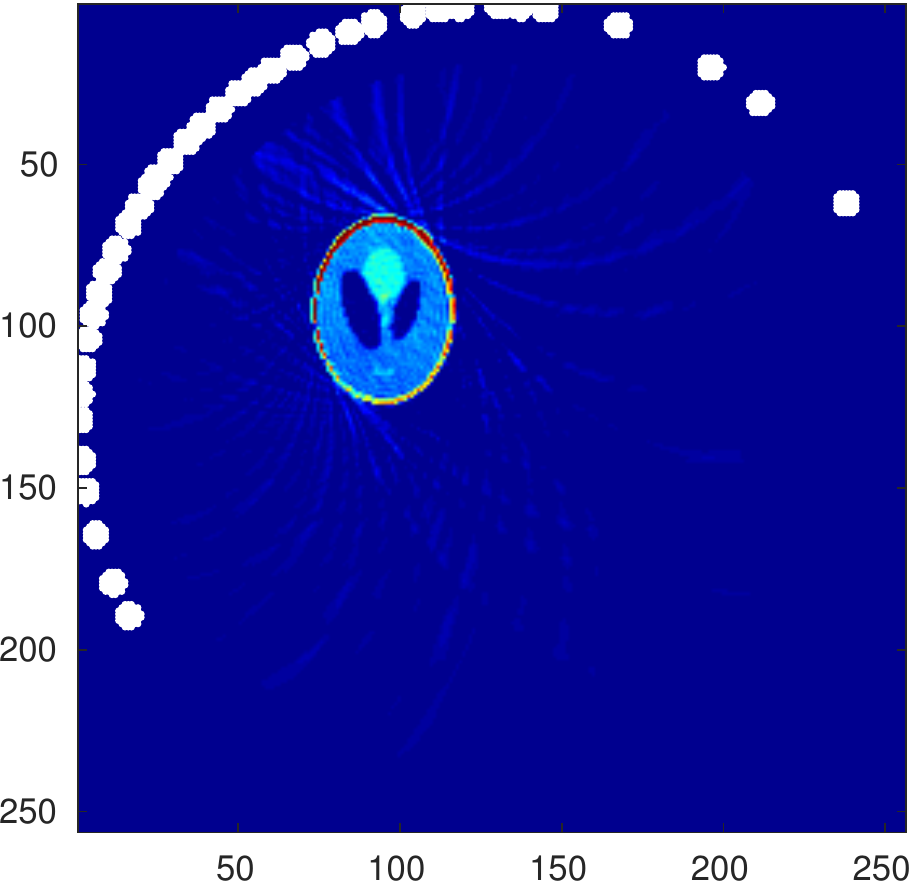} 
 	\includegraphics[width=0.32\linewidth]{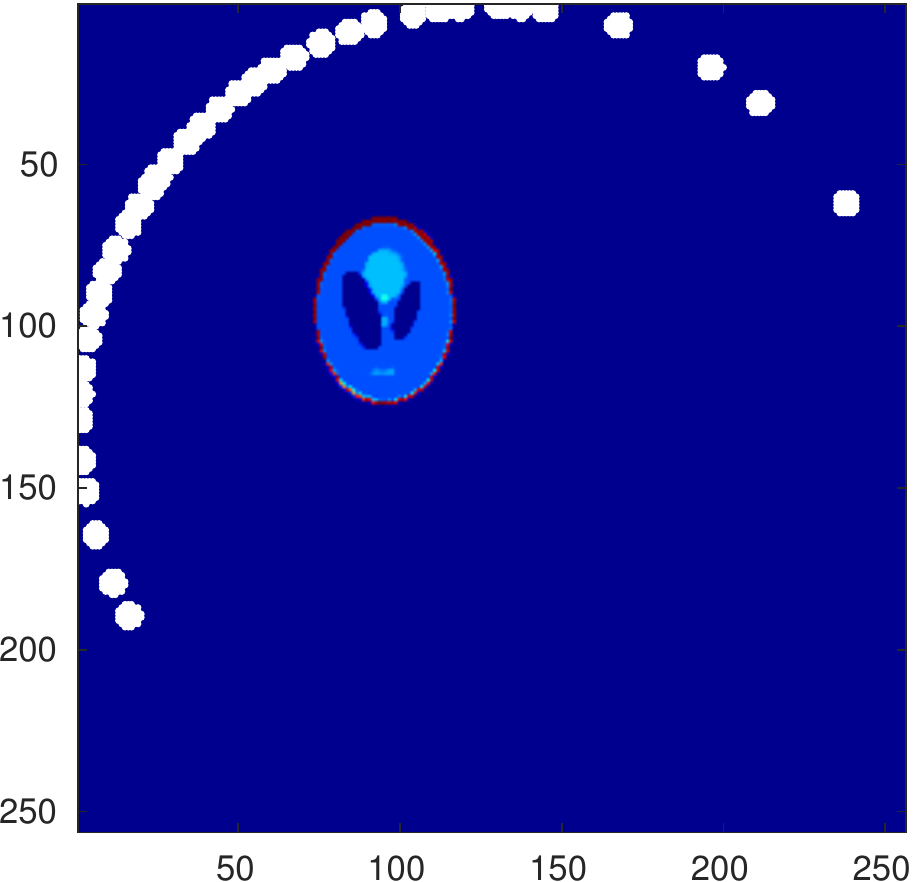} \\
 	\includegraphics[width=0.32\linewidth]{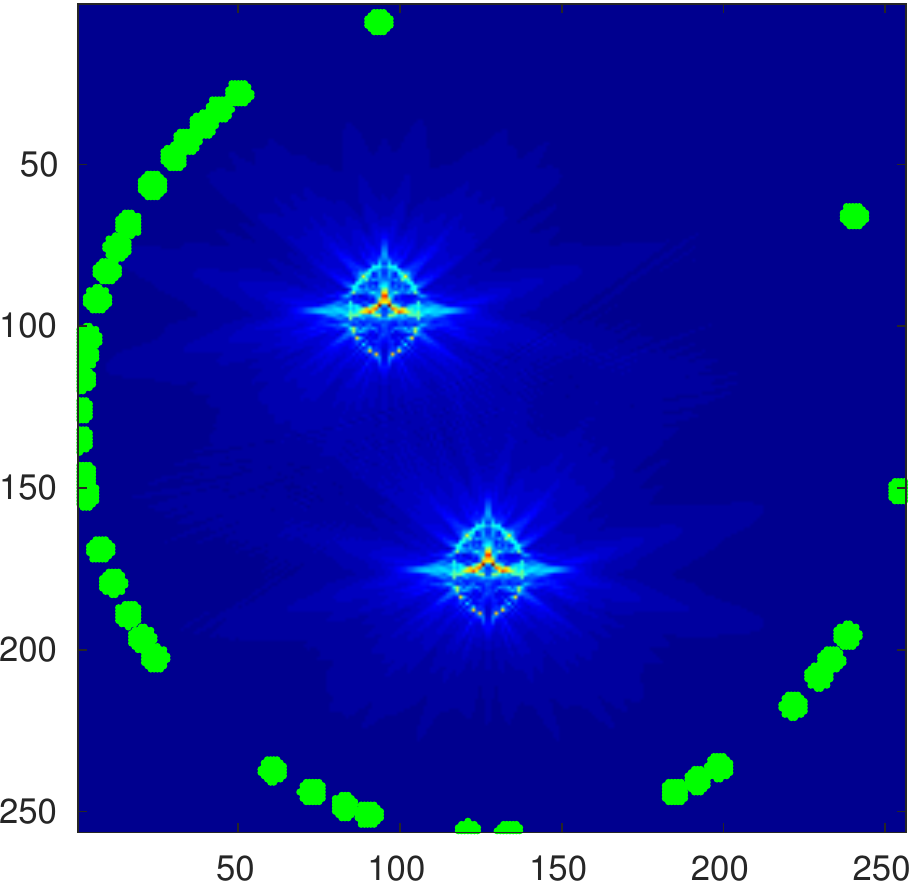}
 	\includegraphics[width=0.32\linewidth]{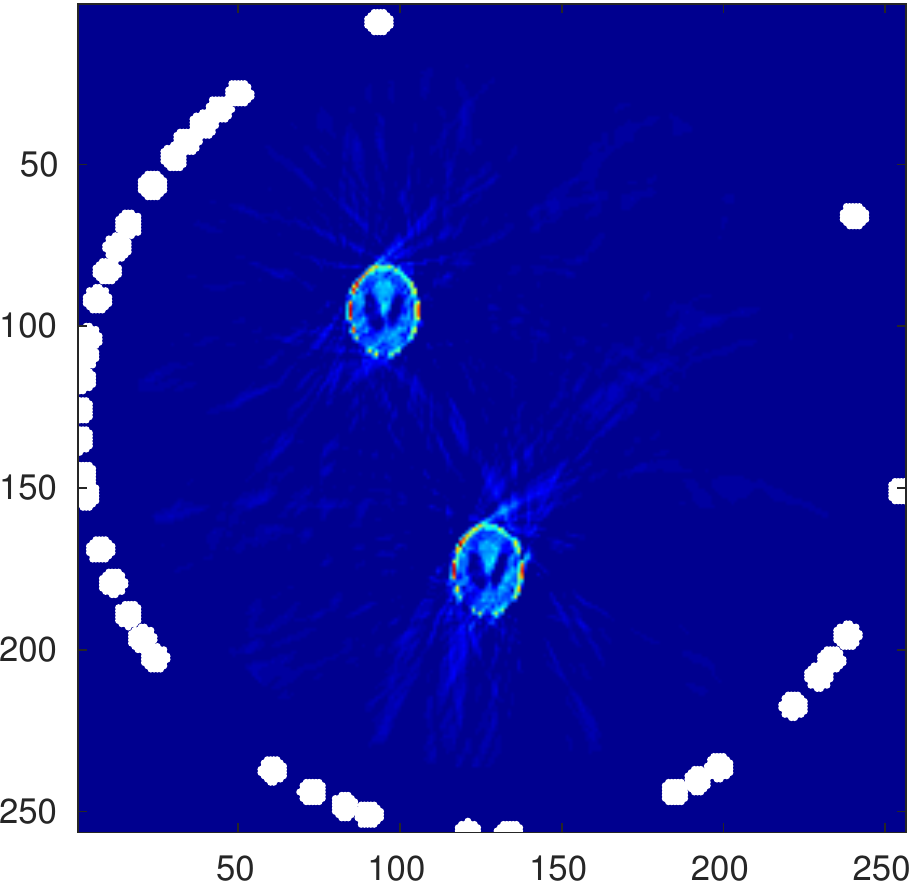} 
 	\includegraphics[width=0.32\linewidth]{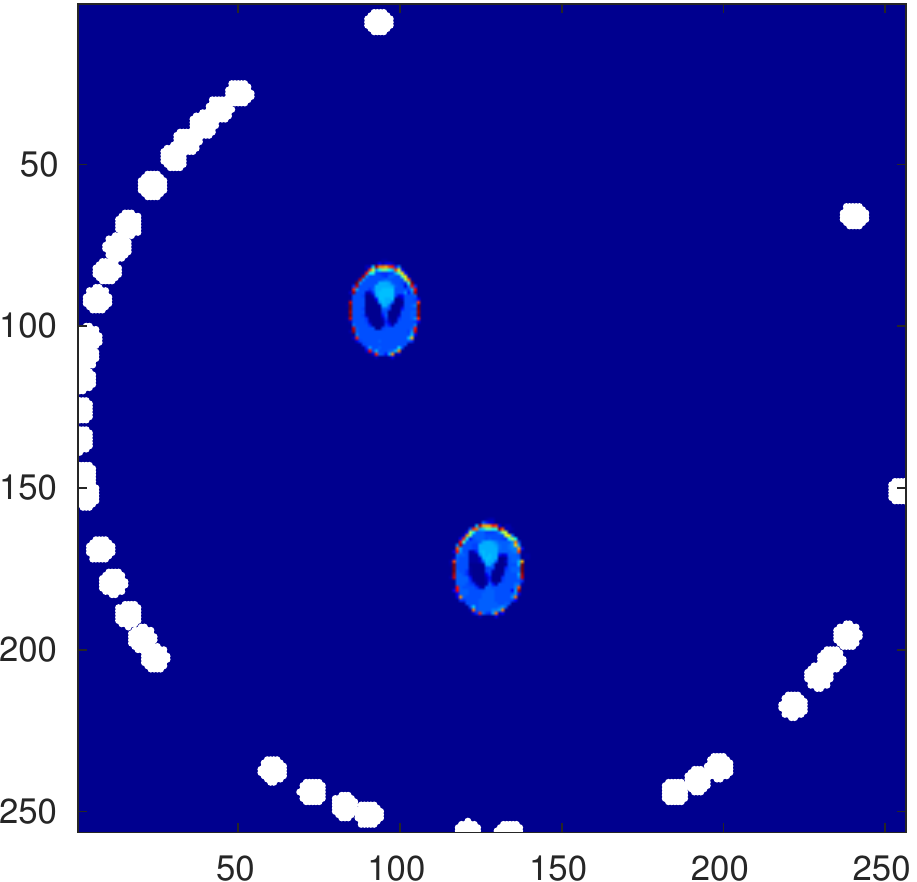} \\
\end{center}
 \caption{Optimization of sensors location with only $38$ sensors.  Each line corresponds to a different choice of the source $p_0$. 
 Left: function $ \psi_{[p_0]}(x)$ defined by \eqref{n2202} on $\Omega$ and the best location of sensors (green dots);
 middle: reconstruction by using the time reversal imaging $\I[g]$; 
 right: reconstructed source $p^{n}_{0}$ after $n= 30$ iterations by using the resulting optimal location of sensors.
 \label{fig:time_reversal_iterationTV3}
}
 \end{figure}

\section{Conclusion}
This article is a first attempt to efficiently locate sensors in the quite sensitive framework of thermo-acoustic tomography. Although the first numerical results seem promising, we foresee to study this issue further by investigating
\begin{itemize} 
\item other choices of modeling. In particular, does it exist better choices of functional $A_2$ as a reconstruction quality factor (for instance other observers)?
\item how to choose more adequately the regularizing term in the optimization problem and investigate the sensitivity of solution with respect to this term?
\item does the iterative scheme consisting of successively estimating the source ${p}_0$ and then a new location of sensors, will converge? In that case, can the limit be identified?
\item can we analyze the relationships between the reconstruction parameters and the number of sensors used for the experiment?
\item how to improve the optimization procedure, especially when dealing with a finite number of sensors where we used a efficient but costly genetic algorithm?
\end{itemize}

More generally, we also plan to collaborate with physicists researchers in view of making experiments and testing our approach and method on true medical imaging data. We believe that the techniques developed within this article can be adapted to many other situations. Nevertheless, it is likely that several additional constraints on the sensor set should be taken into account, typically a restriction on the zones of sensor location.

\bibliographystyle{abbrv}
\bibliography{biblio}

\end{document}